\newtheorem{theorem}{Theorem}[section]
\newtheorem{corollary}[theorem]{Corollary}
\newtheorem{lemma}[theorem]{Lemma}
\newtheorem{proposition}[theorem]{Proposition}
\theoremstyle{definition}
\newtheorem{definition}[theorem]{Definition}
\theoremstyle{remark}
\newtheorem{remark}[theorem]{Remark}
\newtheorem{example}[theorem]{Example}
\numberwithin{equation}{section}
\newcommand\numberthis{\addtocounter{equation}{1}\tag{\theequation}}
\newcommand{\ms}{\mathscr}
\newcommand{\mb}{\mathbb}
\newcommand{\mr}{\mathrm}
\newcommand{\mc}{\mathcal}
\newcommand{\norm}[1]{\left\lvert#1\right\rvert}
\newcommand{\End}{\mathrm{End}}
\newcommand{\PA}{\mc{P}_{\mc{A}^{0,\bullet}(X)}}
\newcommand{\Db}{\mc{A}^{0,\bullet}(X)}
\newcommand{\del}{\partial}
\newcommand{\delbar}{\bar{\partial}}
\newcommand{\pr}{\prime}
\newcommand{\dpr}{\prime\prime}
\newcommand{\tr}{\mr{Tr}_s}
\newcommand{\tE}{\widetilde{E}^{\bullet}}
\newcommand{\mE}{\mb{E}}
\newcommand{\tmE}{\widetilde{\mE}}
\newcommand{\mEh}{{\mE}_h}
\newcommand{\mEhp}{{\mE}^{\pr}_h}
\newcommand{\mEdp}{{\mE}^{\dpr}}
\newcommand{\tR}{\widetilde{\mc{R}}}
\newcommand{\Rh}{\mc{R}_h}
\newcommand{\mM}{\mc{M}}
\begin{document}
	\title{On the Bott-Chern characteristic classes for coherent sheaves} 
	\author{Hua Qiang}
	\date{}
	\maketitle
	
	\begin{abstract}
		In the paper \cite{Block2010}, Block constructed a dg-category $\mc{P}_{\mc{A}^{0, \bullet}(X)}$ using cohesive modules which is a dg-enhancement of $D^b_{\mr{Coh}}(X)$, the bounded derived category of complexes of analytic sheaves with coherent cohomology. In this article, we construct natural superconnections on cohesive modules and use them to define characteristic classes with values in Bott-Chern cohomology. In addition, we generalize the double transgression formulas in \cite{Bismut1988} \cite{Botta} \cite{Donaldson1987} and prove the invariance of these characteristic classes under derived equivalences. This provides an extension of Bott-Chern characteristic classes to coherent sheaves on complex manifolds.
	\end{abstract}
	
	\tableofcontents
	
	\section{Introduction}
		Traditionally, the complex structure of a complex manifold $X$ is encoded in the sheaf of holomorphic functions $\mc{O}_X$. For applications to noncommutative geometry, such local constructions are not available and we are forced to use global differential geometric constructions. When the manifold is projective, every coherent sheaf $\ms{S}$ admits a global resolution by holomorphic vector bundles 
		$$ 0 \rightarrow E^n \rightarrow E^{n-1} \rightarrow ... \rightarrow E^1 \rightarrow E^0 \rightarrow \ms{S} \rightarrow0$$
		and we can apply the theory of holomorphic vector bundles to study $\ms{S}$. However, for a general compact complex manifold that is not projective, there may not exist a global resolution by holomorphic vector bundles and alternative methods are required. 
	
		Even though the sheaf of holomorphic sections of $\ms{S}$ does not admit global resolution by holomorphic vector bundles, the underlying sheaf of real analytic sections does admit a global resolution by real analytic vector bundles, see \cite{Atiyah1961}. In \cite{Block2010}\cite{Block2006}, Block constructed higher order differentials on such resolutions and the resulting geometric objects are called cohesive modules. He further showed that they form a dg-category $\mc{P}_{A^{0,\bullet}(X)}$ whose homotopy category is equivalent to $D^b_{\mr{Coh}}(X)$, the bounded derived category of complexes of sheaves of $\mc{O}_X$-modules with coherent cohomology. Using the theory of superconnections defined in \cite{Quillen1985}, it's natural to extend the classical constructions for holomorphic vector bundles to cohesive modules. The organization of the paper is as follows.
	
		In section 2, we review the theory of cohesive modules and then construct the analog of Chern connections on a Hermitian cohesive module in the following sense. 
	
		\begin{theorem} Given a cohesive module $(E^{\bullet}, \mb{E}^{\dpr})$ with a Hermitian structure $h_E$, there exist an unique $\del^X$-superconnection $\mb{E}^{\dpr}$ of total degree $-1$ such that the superconnection $\mb{E} = \mb{E}^{\dpr} + \mb{E}^{\pr}$ is unitary. That is, $\mb{E}$ satisfies: 
			\begin{equation}
				(-1)^{\norm{s}} d^X h_E(s,t) = -h_E (\mb{E}s, t) + h_E(s, \mb{E}t)
			\end{equation}
		for all $s, t \in \mc{A}^{\bullet}(X,E^{\bullet})$.
		\end{theorem}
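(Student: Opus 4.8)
The plan is to mimic, component by component, the construction of the Chern connection of a Hermitian holomorphic vector bundle. Write $\mb{E}'' = \sum_{k\ge 0}\mb{E}''_k$ for the given $\delbar^X$-superconnection, with $\mb{E}''_k$ a $(0,k)$-form valued endomorphism of $E^\bullet$ raising the grading by $1-k$; in particular $\mb{E}''_0$ is the differential of the complex and $\mb{E}''_1$ a $(0,1)$-connection. As dictated by the notion of a $\del^X$-superconnection of total degree $-1$ from Section~2, I look for $\mb{E}' = \sum_{k\ge 0}\mb{E}'_k$ with $\mb{E}'_k$ a $(k,0)$-form valued endomorphism raising the grading by $k-1$ (so that $\mb{E}'_0$ has degree $-1$ and $\mb{E}'_1$ is a $(1,0)$-connection), and I set $\mb{E} = \mb{E}'' + \mb{E}'$. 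Since the Hermitian structure $h_E$ is an orthogonal sum of metrics on the graded pieces $E^i$, the form $h_E(s,t)$ vanishes unless $s$ and $t$ have the same grading, and on $\mc{A}^{p,q}(X,E^i)\times\mc{A}^{r,\ell}(X,E^i)$ it is valued in $\mc{A}^{p+\ell,q+r}(X)$; by bilinearity it suffices to treat homogeneous $s,t$.

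First I would split the unitarity identity by type. On its right-hand side, once the conjugation in the second slot of $h_E$ is accounted for, every occurrence of $\mb{E}'$ raises the total holomorphic degree by one while every occurrence of $\mb{E}''$ raises the total antiholomorphic degree by one; matching this against $d^X = \del^X + \delbar^X$ on the left shows that the unitarity identity is equivalent to the pair
\begin{align*}
(-1)^{\norm{s}}\,\del^X h_E(s,t) &= -h_E(\mb{E}'s,t) + h_E(s,\mb{E}''t), \\
(-1)^{\norm{s}}\,\delbar^X h_E(s,t) &= -h_E(\mb{E}''s,t) + h_E(s,\mb{E}'t).
\end{align*}
Using the graded Hermitian symmetry of $h_E$ together with the fact that complex conjugation interchanges $\del^X$ and $\delbar^X$, the second equation is obtained from the first by exchanging $s$ and $t$ and conjugating; so it is enough to produce a unique $\mb{E}'$ solving the first one.

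Then I would expand the first equation by exterior degree and grading. For homogeneous $s\in\mc{A}^{p,q}(X,E^i)$, $t\in\mc{A}^{r,\ell}(X,E^j)$, its left-hand side lives in bidegree $(p+\ell+1,q+r)$, while $h_E(\mb{E}'_k s,t)$ and $h_E(s,\mb{E}''_k t)$ both live in bidegree $(p+\ell+k,q+r)$ and vanish unless $j=i+k-1$. For $k=1$ (hence $i=j$) this is, on each $E^i$ separately, exactly the equation characterizing the Chern connection of the $(0,1)$-connection $\mb{E}''_1|_{E^i}$ for the metric $h_E|_{E^i}$ (no integrability of $\mb{E}''_1$ is needed), and it has a unique solution; assembling these over $i$ produces the $(1,0)$-connection $\mb{E}'_1$. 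For $k\ne 1$ the left-hand side contributes nothing and the equation collapses to $h_E(\mb{E}'_k s,t)=h_E(s,\mb{E}''_k t)$ for all $s,t$; fibrewise nondegeneracy of $h_E$ then forces $\mb{E}'_k$ to be the fibrewise Hermitian adjoint of $\mb{E}''_k$, which is $\mc{A}^0$-linear of the required $(k,0)$-form / grading-shift-$(k-1)$ type because $\mb{E}''_k$ is. Hence $\mb{E}'=\sum_k\mb{E}'_k$ exists, is unique, is a $\del^X$-superconnection of total degree $-1$, and makes $\mb{E}=\mb{E}''+\mb{E}'$ unitary.

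I expect the principal obstacle to be sign bookkeeping rather than anything conceptual: one must track the factors $(-1)^{\norm{s}}$, the graded symmetry $h_E(t,s)=\pm\,\overline{h_E(s,t)}$, and the Koszul signs produced when $\mb{E}'_k$ and $\mb{E}''_k$ are commuted past differential forms, both in order to confirm that the two displayed equations are genuine conjugates of one another and to check that the pieces $\mb{E}'_k$ reassemble into an operator satisfying the $\del^X$-Leibniz rule (only $\mb{E}'_1$ contributes a derivation term, all other $\mb{E}'_k$ being tensorial). Once the conventions of Section~2 are fixed, existence and uniqueness both fall out formally from the nondegeneracy of $h_E$.
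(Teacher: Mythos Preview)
Your proposal is correct and follows essentially the same route as the paper: decompose $\mb{E}''$ into its connection component $\mb{E}''_1$ and tensorial pieces $\mb{E}''_k$ ($k\neq 1$), invoke the classical Chern-connection argument for $\mb{E}'_1$, and take fibrewise Hermitian adjoints for the remaining $\mb{E}'_k$; the paper phrases the last step via its $*$-involution (Lemma~2.11) and isolates the $k=1$ step as a separate lemma, but the content is identical. Your bidegree bookkeeping has a small slip (with the paper's convention $h_E$ lands in $\mc{A}^{q+r,\,p+\ell}$ rather than $\mc{A}^{p+\ell,\,q+r}$, since the first slot is $*$-conjugated), but as you anticipated this is purely a sign/index matter and does not affect the argument.
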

	
		We then study the basic properties of the curvature $\mc{R}^{\mb{E}}$ of the Chern superconnection $\mb{E}$.  Though it is no longer a $(1,1)$ form on $X$, if we define an exotic grading on $\mc{A}^{p, q}(X, \End^d E^{\bullet})$ by $-p + q + d$, then the curvature $\mc{R}$ is of exotic degree zero. Applying the Chern-Weil theory for superconnections, we obtain characteristic forms with values in Bott-Chern cohomology which is a refinement of deRham cohomology. We prove the image in deRham cohomology of the characteristic forms only depends on the $\mb{Z}_2$-graded topological bundle structure of a cohesive module by transgressing the characteristic forms defined by the Chern superconnection $\mb{E}$ to differential forms defined by its connection component $\mb{E}_1$.
	
		In section 3, we prove the characteristic classes in Bott-Chern cohomology are independent of the Hermitian metric by establishing several transgression formulas. These formulas were first obtained by Bott and Chern in \cite{Botta}. To generalize them to cohesive modules, we study the universal cohesive module $\widetilde{E}$ on the space of Hermitian metrics $\mc{M}$ on $E$.
	
	\begin{theorem}[Bott-Chern transgression formula]
		For any convergent power series $f(T)$, the $\mc{M}$-directional differential of the Bott-Chern characteristic form $\tr f(\mc{R}_h)$ is given by:
		\begin{equation}
			d^{\mM}\tr f(\mc{R}_h)=\del^X\delbar^X\tr \{f^{\pr}(\mc{R}_h)\cdot \theta\}
		\end{equation}
		where $\theta$ is the Maruer Cartan form on $\mc{M}$ defined by $\theta = h^{-1} \cdot d^{\mc{M}}h$. 
	\end{theorem}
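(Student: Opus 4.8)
The plan is to obtain the formula from a Chern--Weil computation for the universal cohesive module $\widetilde{E}$ on $X\times\mM$ (the family over the space $\mM$ of Hermitian metrics on $E^{\bullet}$), following the template of Bott--Chern \cite{Botta} for Hermitian holomorphic bundles, in which $d^{\mM}$ is realized as the $\mM$-directional component of the total curvature. The one place that requires care is that $\Rh$ is not of type $(1,1)$ but only of exotic degree $0$: the classical passage from $d^X$-exactness to $\del^X\delbar^X$-exactness rests on splitting the Bianchi identity by holomorphic bidegree, and one has to check that the corresponding splitting by the \emph{exotic} grading is available here.

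The computation rests on two inputs, which I would establish first. The first is the structure of the universal Chern superconnection $\tmE$, i.e.\ the Chern superconnection attached to $\widetilde{E}$ with its tautological Hermitian metric: it restricts to $\mEh$ on each slice $X\times\{h\}$, and its component of $\mM$-form-degree one is governed by $\theta=h^{-1}\cdot d^{\mM}h\in\mc A^{0,0}(X,\End^{0}E^{\bullet})\otimes\Omega^{1}_{\mM}$, so that, up to an overall sign and up to terms annihilated by $(\mEdp)^{2}=0$, the $\mM$-form-degree-one part of $\tR=\tmE^{2}$ equals $[\mEhp,\theta]=:\mEhp(\theta)$; equivalently, along a smooth path of Hermitian metrics one has $\dot{\mE}^{\dpr}=0$ (since $\mEdp$ is the fixed datum of the cohesive module) and $\dot{\mE}^{\pr}_{h}=[\mEhp,\theta]$. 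I would obtain this by differentiating the characterizing unitarity identity of the Chern superconnection (the first theorem above) in the $\mM$-direction and invoking its uniqueness, using that componentwise $\mEhp$ is, up to sign, the $h$-adjoint of $\mEdp$; this is the precise analogue of the identities $\dot{\nabla}^{1,0}=\nabla^{1,0}\theta$, $\dot{\nabla}^{0,1}=0$ for the Chern connection of a Hermitian holomorphic bundle. The second input consists of the structural facts of Section~2, rephrased for the exotic grading: $(\mEhp)^{2}=0$, equivalently $\Rh=[\mEdp,\mEhp]$ is homogeneous of exotic degree $0$; the Bianchi identity $[\mEh,f^{\pr}(\Rh)]=0$, which splits into the two separate identities $[\mEdp,f^{\pr}(\Rh)]=0$ and $[\mEhp,f^{\pr}(\Rh)]=0$ because $f^{\pr}(\Rh)$ has exotic degree $0$ while $\mEdp$ and $\mEhp$ change the exotic degree by $+1$ and $-1$ respectively; and the identities $\tr[\mEdp,\,\cdot\,]=\delbar^X\tr(\,\cdot\,)$, $\tr[\mEhp,\,\cdot\,]=\del^X\tr(\,\cdot\,)$ on $\End^{\bullet}(E^{\bullet})$-valued forms.

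Granting these, the rest parallels the classical argument. The form $\tr f(\tR)$ is closed for $d^X+d^{\mM}$ (from $[\tmE,\tR]=0$ together with $\tr[\tmE,\,\cdot\,]=(d^X+d^{\mM})\tr(\,\cdot\,)$), and expanding $\tr f(\tR)=\tr f(\Rh)+\tr\{f^{\pr}(\Rh)\cdot\mEhp(\theta)\}+(\text{higher }\mM\text{-degree})$ and taking the $\mM$-form-degree-one part of the closedness relation yields $d^{\mM}\tr f(\Rh)=\pm\, d^X\tr\{f^{\pr}(\Rh)\cdot\mEhp(\theta)\}$. One then refines $d^X$ to $\del^X\delbar^X$ exactly as in the holomorphic case: since $(\mEhp)^{2}=0$ the $\del^X$-component of the right-hand side vanishes; the split Bianchi identity lets one move $\mEdp$ across $f^{\pr}(\Rh)$; the graded Jacobi identity together with $\Rh=[\mEdp,\mEhp]$ turns $[\mEdp,[\mEhp,\theta]]$ into $[\Rh,\theta]-[\mEhp,[\mEdp,\theta]]$, of which the $[\Rh,\theta]$-term is killed under $\tr$ by cyclicity since $f^{\pr}(\Rh)$ commutes with $\Rh$; and the identities $\tr[\mEhp,\,\cdot\,]=\del^X\tr(\,\cdot\,)$, $\tr[\mEdp,\,\cdot\,]=\delbar^X\tr(\,\cdot\,)$ reassemble the surviving term, once one tracks the signs forced by the $\mb{Z}_{2}$-grading and by $\del^X\delbar^X=-\delbar^X\del^X$, into $\del^X\delbar^X\tr\{f^{\pr}(\Rh)\cdot\theta\}$.

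I expect the main obstacle to be the first input, i.e.\ the variational formula $\dot{\mE}^{\pr}_{h}=[\mEhp,\theta]$ (equivalently the asserted structure of $\tmE$): with no holomorphic frame available, it must be extracted directly from the explicit description of $\mEhp$ as the $h$-adjoint of $\mEdp$, with careful bookkeeping of the $\mb{Z}_{2}$-graded signs. One should also note that $f$ is taken to be a convergent power series because $\Rh$ need not be nilpotent --- it carries the $\mc A^{0,0}(X,\End^{0}E^{\bullet})$-component $(\mEdp)_{0}(\mEhp)_{0}+(\mEhp)_{0}(\mEdp)_{0}$, the ``Laplacian'' of the differential of $E^{\bullet}$ --- so $f(\Rh)$ has to be defined via a convergent series.
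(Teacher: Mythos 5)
Your proposal is correct and follows essentially the same route as the paper: the universal module over $\mM\times X$, the variational identity $d^{\mM}\mE^{\pr}_h=-[\mEhp,\theta]$ together with $d^{\mM}\mEdp=0$, closedness of $\tr f(\tR)$ split by $\mM$-form degree to get $d^{\mM}\tr f(\Rh)=\pm\,d^X\tr\{f^{\pr}(\Rh)\cdot[\mEhp,\theta]\}$, and then the exotic-degree decomposition plus the split Bianchi identity (your Jacobi-identity manipulation of $[\mEdp,[\mEhp,\theta]]$ is equivalent to the paper's comparison of $\mc{G}^{-1}$-components in $d^X\tr\{f^{\pr}(\Rh)\cdot\theta\}$) to refine $d^X$-exactness to $\del^X\delbar^X$-exactness. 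The one caveat is that the paper deliberately works with the flat extension $\tmE=d^{\mM}+\mE_h$ rather than the genuine Chern superconnection of $(\tE,\tilde h)$, which differs from $\tmE$ by a $\theta$-term in the $\mM$-direction; your path-wise formulation $\dot{\mE}^{\pr}_h=\pm[\mEhp,\theta]$, $\dot{\mE}^{\dpr}=0$ amounts to the same computation, so this is only a matter of bookkeeping.
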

	The forms $\tr\{f^{\pr}(\mc{R}_h)\cdot \theta\}$ appeared in the above double transgression formula is the holomorphic analog of Chern-Simons forms in gauge theory. In \cite{Donaldson1987}, Donaldson studied these secondary characteristic forms and their relation to Hermitian-Yang-Mills equations and stability of holomorphic Hermitian vector bundles. Following Donaldson, the technical computations in section 3 establishes the following formula which shows that $\tr\{f^{\pr}(\mc{R}_h)\cdot \theta\}$ are well defined secondary characteristic classes.
	
	\begin{theorem}[Donaldson transgression formula for secondary class] 
		For any convergent power series in $g(T)$, the $\mc{M}$-directional differential of the secondary form $\tr\{g(\mc{R}_h)\cdot \theta\}$ is given by:
		\begin{equation}
			d^{\mM}\tr \{g(\mc{R}_h)\cdot \theta \}
			=\frac{1}{2}\delbar^X\tr \{g(\Rh;[\mEhp,\theta])\cdot\theta \}
			-\frac{1}{2}\del^X\tr \{g(\Rh;[\mEdp,\theta])\cdot\theta\}
		\end{equation}
	\end{theorem}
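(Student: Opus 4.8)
The plan is to carry out the computation on $X\times\mM$ using the universal cohesive module $\widetilde{E}$ and its Chern superconnection from Section~3, in the spirit of Donaldson's argument for Hermitian vector bundles \cite{Donaldson1987}. Write $d^X=\del^X+\delbar^X$ for the differential in the $X$-directions. I would use the following ingredients, all recorded in or immediate from the earlier sections: (i) the first variation formulas $d^{\mM}\mEdp=0$ and $d^{\mM}\mEhp=[\mEhp,\theta]$, obtained by differentiating the characterizing unitarity equation of the Chern superconnection and invoking its uniqueness, together with the Maurer--Cartan equation $d^{\mM}\theta=-\tfrac12[\theta,\theta]=-\theta\wedge\theta$; (ii) the Bianchi identity $[\mEh,\Rh]=0$, which via (i) gives $d^{\mM}\Rh=\pm[\mEh,[\mEhp,\theta]]$, and the transport rule $[\mEh,g(\Rh;B)]=g(\Rh;[\mEh,B])$, valid because each $\Rh$-factor commutes past $[\mEh,-]$ up to signs that cancel; (iii) the supertrace transgression identity $\tr[\mEh,\Phi]=d^X\tr\Phi$ and its bidegree refinements $\tr[\mEhp,\Phi]=\del^X\tr\Phi$ and $\tr[\mEdp,\Phi]=\delbar^X\tr\Phi$, which hold because under $\tr$ only the genuine $\del^X$ (resp.\ $\delbar^X$) part of $\mEhp$ (resp.\ $\mEdp$) survives, all $\End^{\bullet}(E^{\bullet})$-valued form components of the superconnections contributing trivial supercommutators; and (iv) the graded cyclicity of $\tr$, which in particular gives $\tr\{g(\Rh;\theta)\cdot\theta\}=0$ from the $i\leftrightarrow j$ antisymmetry of $\sum_{i+j}\tr(\Rh^{i}\theta\Rh^{j}\theta)$.

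First I would differentiate. Applying $d^{\mM}$ to $\tr\{g(\Rh)\cdot\theta\}$ and using Leibniz, the chain rule $d^{\mM}g(\Rh)=g(\Rh;d^{\mM}\Rh)$, the Maurer--Cartan equation, and the transport rule to pull $[\mEh,-]$ outside the power-series derivative, one obtains (up to a global sign I suppress)
\[
d^{\mM}\tr\{g(\Rh)\cdot\theta\}=\tr\{[\mEh,\,g(\Rh;[\mEhp,\theta])]\cdot\theta\}-\tr\{g(\Rh)\cdot\theta\wedge\theta\}.
\]
An integration by parts under the supertrace, $\tr\{[\mEh,\Phi]\cdot\theta\}=d^X\tr\{\Phi\cdot\theta\}-(-1)^{|\Phi|}\tr\{\Phi\cdot[\mEh,\theta]\}$ (from (iii), using that $\theta$ carries no $X$-form degree), then rewrites the first term as a $d^X$-exact term $d^X\tr\{g(\Rh;[\mEhp,\theta])\cdot\theta\}$ plus a remainder built out of $\tr\{g(\Rh;[\mEhp,\theta])\cdot[\mEh,\theta]\}$ and the Maurer--Cartan term. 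All the remaining content lies in analyzing this remainder.

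The heart of the argument is to show that the remainder cancels exactly one half of the $d^X$-exact term and promotes the other half to the crossed combination asserted in the theorem. Concretely I would: split $[\mEh,\theta]=[\mEhp,\theta]+[\mEdp,\theta]$; apply the Jacobi identity to trade $[\mEh,[\mEhp,\theta]]$ for $[\mEhp,[\mEh,\theta]]$ plus terms involving $[\,[\mEh,\mEhp],\theta\,]$, rewriting $[\mEh,\mEhp]$ by means of $\Rh=(\mEhp)^2+[\mEhp,\mEdp]$ so that the resulting curvature contributions merge with $\tr\{g(\Rh)\cdot\theta\wedge\theta\}$; perform a second integration by parts, now against $\mEhp$ or $\mEdp$ so as to produce $\del^X$ or $\delbar^X$ via (iii); and finally use graded cyclicity together with the vanishing $\tr\{g(\Rh;\theta)\cdot\theta\}=0$. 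The factor $\tfrac12$ is forced at this stage and traces back to the $\tfrac12$ in the Maurer--Cartan relation and to the symmetrization produced by cyclicity. Splitting $d^X=\del^X+\delbar^X$ and using linearity of $g(\Rh;-)$ in its second slot, the identity collapses to the stated formula.

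The step I expect to be the main obstacle is precisely this reorganization of the remainder: keeping the Koszul signs straight through the repeated Jacobi and integration-by-parts manipulations, and verifying that exactly one half of the naive $d^X$-exact term splits off while the complementary half acquires the \emph{crossed} bidegree pairing --- $\delbar^X$ with the $\mEhp$-term and $\del^X$ with the $\mEdp$-term --- which is what makes the output land in Bott--Chern rather than de~Rham cohomology. Relative to Donaldson's computation, the superconnection setting brings one extra difficulty: since $\Rh$ is not of pure bidegree, the curvature components $(\mEhp)^2$ and $[\mEhp,\mEdp]$ intervene in the intermediate expressions, and one must check that they enter only through supercommutators and hence drop out under $\tr$; keeping in view the exotic grading on $\mc{A}^{p,q}(X,\End^d E^{\bullet})$ from Section~2 is what makes this manageable. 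Once the identity is established for $g(T)=T^{n}$ it extends to any convergent power series by linearity and continuity.
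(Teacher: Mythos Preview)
Your overall strategy coincides with the paper's: differentiate in the $\mM$-direction, use $d^{\mM}\Rh=[\mEh,[\mEhp,\theta]]$ and Maurer--Cartan, integrate by parts against $\mEh$ via $\tr[\mEh,\Psi]=d^X\tr\Psi$, and then sort the result by exotic degree to separate $\del^X$ from $\delbar^X$. Up to the point where you reach
\[
d^{\mM}\tr\{g(\Rh)\cdot\theta\}+\tr\{g(\Rh)\cdot\theta^2\}=d^X\tr\{g(\Rh;[\mEhp,\theta])\cdot\theta\}-\tr\{g(\Rh;[\mEhp,\theta])\cdot[\mEh,\theta]\}
\]
your outline matches the paper's Lemmas~3.11--3.12 essentially verbatim.

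The gap is in your treatment of the remainder. You assert that the cancellation is governed by the vanishing $\tr\{g(\Rh;\theta)\cdot\theta\}=0$ and that the factor $\tfrac12$ ``traces back to the $\tfrac12$ in the Maurer--Cartan relation.'' Neither is what actually closes the argument. After a second integration by parts (against the slot containing $[\mEdp,\theta]$) and a Jacobi rearrangement using $[\mEhp,[\mEdp,\theta]]=[\Rh,\theta]-d^{\mM}\Rh$, the residual term is $\tr\{g(\Rh;\theta)\cdot[\Rh,\theta]\}$, not $\tr\{g(\Rh;\theta)\cdot\theta\}$. The key combinatorial identity is the telescoping computation
\[
\tr\{g(\Rh;\theta)\cdot[\Rh,\theta]\}=-2\,\tr\{g(\Rh)\cdot\theta^2\},
\]
which exactly cancels the two copies of $\tr\{g(\Rh)\cdot\theta^2\}$ that have accumulated on the left; this is the genuine source of the $\tfrac12$. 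Your antisymmetry observation $\sum_{i+j}\tr(\Rh^i\theta\Rh^j\theta)=0$ is correct but does not by itself produce this cancellation. Two smaller points: the paper extracts several auxiliary vanishings (e.g.\ $\tr\{g(\Rh;[\mEhp,\theta])\cdot[\mEhp,\theta]\}=0$) by comparing exotic-degree components, and these are needed to justify the $\del^X/\delbar^X$ split you describe; and your worry about $(\mEhp)^2$ is moot, since Lemma~2.19 already gives $(\mEhp)^2=0$ and $\Rh=[\mEhp,\mEdp]$.
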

	
	In the last section, we prove the invariance of characteristic classes under homotopy equivalences between cohesive modules. By a criteria proved by Block in \cite{Block2010}, two cohesive modules $E, F$ are homotopy equivalent if and only if there is a degree zero closed morphism $\phi \in \mc{P}_{\mc{A}^{0,\bullet}(X)}^0(E, F)$ that induces a quasi-isomorphism between the complexes $(E^{\bullet}, \mb{E}^{\dpr}_0)$ and $(F^{\bullet}, \mb{F}^{\dpr}_0)$. Using this result, we prove the invariance in two steps. First, we show that characteristic classes of an acyclic cohesive module are trivial.
	\begin{proposition}
		Assume $(E^{\bullet}, \mb{E}^{\dpr})$ is a cohesive module such that $(E^{\bullet}, \mb{E}^{\dpr}_0)$ is an acyclic complex. Let $\mb{E}^{\dpr}_t = \sum_k t^{(1-k)/2} \mb{E}^{\dpr}_k$ be the rescaled cohesive structure with parameter $t \in \mb{R}^+$ with associated curvature $\mc{R}_t$ and let $N_E$ be the grading operator on $E$. Then the integral 
		\begin{equation}
			\mc{I}_E = \int_{1}^{\infty} \tr \{ \exp(-\mc{R}_t)\cdot N_E \frac{dt}{t} \}
		\end{equation}
		is finite and we have
		\begin{equation}
			\mr{ch}(E) = \tr\exp({-\mc{R}_1})=\del^X\delbar^X \mc{I}_E
		\end{equation}
	\end{proposition}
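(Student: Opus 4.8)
The plan is to exhibit $\mr{ch}(E)=\tr\exp(-\mc{R}_1)$ as the boundary term at $t=1$ of the transgression integral $\mc{I}_E$, following the pattern of the classical Bott--Chern vanishing theorem for acyclic complexes: prove a transgression identity governing the $t$--derivative of $\tr\exp(-\mc{R}_t)$, establish the decay of the integrand as $t\to\infty$ using acyclicity, and conclude with the fundamental theorem of calculus.

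First I would reduce the transgression in the rescaling parameter to the metric transgression of Theorem 1.2. Put $\phi_t=t^{N_E/2}\in\mr{Aut}(E^{\bullet})$; checking on components, one has $\mb{E}^{\dpr}_t=\sum_k t^{(1-k)/2}\mb{E}^{\dpr}_k=\phi_t\,\mb{E}^{\dpr}\,\phi_t^{-1}$. By the uniqueness part of Theorem 1.1 (naturality of the Chern superconnection under a gauge transformation together with the corresponding change of Hermitian metric), the Chern superconnection $\mb{E}_t$ of $(E^{\bullet},\mb{E}^{\dpr}_t)$ for the fixed metric $h_E$ satisfies $\mb{E}_t=\phi_t\,\mb{E}^{(t)}\,\phi_t^{-1}$, where $\mb{E}^{(t)}$ is the Chern superconnection of the original $(E^{\bullet},\mb{E}^{\dpr})$ for the rescaled metric $h_t:=\phi_t^{*}h_E$, whose restriction to $E^j$ equals $t^{j}\,h_E|_{E^j}$; consequently $\mc{R}_t=\phi_t\,\mc{R}_{h_t}\,\phi_t^{-1}$. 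Since $N_E$ commutes with $\phi_t$ and the supertrace is conjugation--invariant, $\tr f(\mc{R}_t)=\tr f(\mc{R}_{h_t})$ and $\tr\{f(\mc{R}_t)N_E\}=\tr\{f(\mc{R}_{h_t})N_E\}$ for every convergent power series $f$. Now $t\mapsto h_t$ is a smooth curve in $\mM$ along which the Maurer--Cartan form pulls back to $\theta_t\,dt$ with $\theta_t=h_t^{-1}\tfrac{\del}{\del t}h_t=\tfrac1t N_E$, so applying Theorem 1.2 with $f(T)=e^{-T}$, hence $f^{\pr}(T)=-e^{-T}$, along this curve and translating back yields
\[
\frac{\del}{\del t}\,\tr\exp(-\mc{R}_t)\;=\;-\,\frac1t\,\del^X\delbar^X\,\tr\{\exp(-\mc{R}_t)\,N_E\}.
\]
The same identity can alternatively be obtained directly from $\tfrac{\del}{\del t}\mb{E}^{\dpr}_t=\tfrac1{2t}[N_E,\mb{E}^{\dpr}_t]$, $\tfrac{\del}{\del t}\mb{E}^{\pr}_t=-\tfrac1{2t}[N_E,\mb{E}^{\pr}_t]$, the Bianchi identity $[\mb{E}_t,\mc{R}_t]=0$, the super--Jacobi identity, and the vanishing of the supertrace on graded commutators, i.e.\ the computational devices of Section 3.

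The analytic core is the behaviour as $t\to\infty$, and this is where acyclicity enters. Decompose $\mb{E}_t=t^{1/2}(D_0+D_0^{*})+(\mb{E}^{\dpr}_1+\mb{E}^{\pr}_1)+t^{-1/2}(\mb{E}^{\dpr}_2+\mb{E}^{\pr}_2)+\cdots$, where $D_0=\mb{E}^{\dpr}_0\in\mc{A}^{0,0}(X,\End^1 E^{\bullet})$ is the pointwise differential of the complex and $D_0^{*}=\mb{E}^{\pr}_0$ its $h_E$--adjoint. The form--degree--zero part of $\mc{R}_t=\mb{E}_t^2$ is then exactly $t\,\Delta_0$ with $\Delta_0=D_0D_0^{*}+D_0^{*}D_0$ a non--negative self--adjoint bundle endomorphism of form--degree zero, while the remaining part $\nu_t$ grows at most like $t^{1/2}$ and is nilpotent (it has positive form degree, bounded by $2\dim X$). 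Because $(E^{\bullet},\mb{E}^{\dpr}_0)$ is an acyclic complex of vector bundles, each fibrewise complex $(E^{\bullet}_x,D_{0,x})$ is exact, so by fibrewise Hodge theory $\Delta_{0,x}>0$ for every $x\in X$, and compactness of $X$ gives a uniform bound $\Delta_0\ge c_0>0$. Hence $\mc{R}_t/t=\Delta_0+O(t^{-1/2})$, and for $t$ large the spectrum of $\mc{R}_t$ lies in $\{\mathrm{Re}\ge tc_0/2\}$; a Duhamel expansion about $t\Delta_0$ — with only finitely many terms, since $\nu_t$ is nilpotent — then yields an estimate $\|\exp(-\mc{R}_t)\|_{C^k(X)}\le C_k\,t^{N}\,e^{-c_0 t/2}$ for all $t\ge1$ and all $k$. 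In particular $\tr\{\exp(-\mc{R}_t)\,N_E\}$ decays exponentially in every $C^k$--norm, so $\mc{I}_E$ is a well--defined smooth form, the operators $\del^X$ and $\delbar^X$ may be moved under the integral sign, and $\lim_{t\to\infty}\tr\exp(-\mc{R}_t)=0$.

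Finally, noting $\mb{E}_1=\mb{E}$ and $\mc{R}_1=\mc{R}^{\mb{E}}$, the fundamental theorem of calculus together with the transgression identity gives
\[
\mr{ch}(E)=\tr\exp(-\mc{R}_1)=-\int_1^{\infty}\frac{\del}{\del t}\tr\exp(-\mc{R}_t)\,dt=\del^X\delbar^X\int_1^{\infty}\tr\{\exp(-\mc{R}_t)\,N_E\}\,\frac{dt}{t}=\del^X\delbar^X\mc{I}_E,
\]
where the first equality uses $\lim_{t\to\infty}\tr\exp(-\mc{R}_t)=0$ and the middle one uses the transgression identity. I expect the main obstacle to be precisely this large--$t$ estimate: passing rigorously from the sheaf--theoretic hypothesis that $(E^{\bullet},\mb{E}^{\dpr}_0)$ is acyclic to uniform fibrewise positivity of $\Delta_0$, and then controlling the non--self--adjoint, form--valued perturbation $\nu_t$ of $t\Delta_0$ uniformly over $X$; once Theorems 1.1 and 1.2 are in hand, everything else is formal.
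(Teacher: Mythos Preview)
Your proof is correct. The paper reaches the transgression identity by a different route: instead of conjugating by $\phi_t=t^{N_E/2}$ to convert the rescaling into a metric deformation and then invoking the Bott--Chern formula of Section~3, the paper builds in Section~4.1 a parallel double--transgression theory for deformations of the cohesive structure $\mb{E}^{\dpr}$ with the metric held fixed. It isolates the notion of an \emph{exact} family---one with $\tfrac{d}{dt}\mb{E}^{\dpr}_t=[\mb{E}^{\dpr}_t,\gamma^{\dpr}_t]$ for some smooth $\gamma^{\dpr}_t\in\mc{G}^{\dpr,0}$---proves $d^{\mc{S}}\tr f(\mc{R})=-\del^X\delbar^X\tr\{f^{\pr}(\mc{R})\cdot(\gamma^{\dpr}+(\gamma^{\dpr})^*)\}$ over such families, and then checks that the rescaling is exact with $\gamma^{\dpr}_t=\tfrac{1}{t}N_E$ (this is essentially the direct computation you mention as an alternative). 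Your gauge reduction is more economical, since it recycles the metric transgression already proved; the paper's framework, on the other hand, is set up in greater generality because it is reused for the mapping--cone additivity argument in the next subsection. For the $t\to\infty$ decay both arguments are the same; the paper simply invokes Bismut--Gillet--Soul\'e for the uniform exponential estimate, while you spell out the fibrewise positivity of $\Delta_0$ and the Duhamel/nilpotent--perturbation bound.
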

	
	Then we show that the characteristic classes are additive with respect to the exact sequence of a mapping cone.
	
	\begin{proposition}
		If $ 0 \rightarrow E \xrightarrow{\phi} F \rightarrow \mr{Cone}(\phi)$ is the mapping cone sequence for a morphism $\phi \in \mc{P}_{\mc{A}^{0, \bullet(X)}}(E, F)$, then the Bott-Chern cohomology classes are additive in the sense that the equality
		\begin{equation}
			f(E^{\bullet}, \mb{E}^{\dpr}) - f(F^{\bullet}, \mb{F}^{\dpr}) 
			+ f(\mr{Cone}^{\bullet}(\phi), \mb{C}_{\phi}) = 0 
		\end{equation}
		holds in Bott-Chern cohomology for any convergent power series $f(T)$.
	\end{proposition}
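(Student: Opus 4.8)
The plan is to connect $\mr{Cone}^\bullet(\phi)$ to the split cohesive module $E^\bullet[1]\oplus F^\bullet$ through a one-parameter family of cohesive modules and to run the transgression machinery of Section 3 along it. Fix Hermitian structures $h_E$ on $(E^\bullet,\mb{E}^{\dpr})$ and $h_F$ on $(F^\bullet,\mb{F}^{\dpr})$ and give $\mr{Cone}^\bullet(\phi)=E^\bullet[1]\oplus F^\bullet$ the block-diagonal metric $h_C=h_E[1]\oplus h_F$. Since the cohesive structure of the cone is block-triangular with the single off-diagonal entry $\phi$, the uniqueness of the Chern superconnection (Section 2) forces it to be $\mb{C}_\phi=\mb{C}_0+\widehat\phi$, where $\mb{C}_0=\mb{E}[1]\oplus\mb{F}$ is the Chern superconnection of the direct sum and $\widehat\phi$ is the odd endomorphism with off-diagonal entries $\phi$ and $-\phi^{*}$, with $\phi^{*}$ the fibrewise adjoint of $\phi$ relative to $h_E,h_F$. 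In particular $\widehat\phi$ is anti-self-adjoint, with bidegree splitting $\widehat\phi=\widehat\phi^{\dpr}+\widehat\phi^{\pr}$, $\widehat\phi^{\pr}=-(\widehat\phi^{\dpr})^{*}$, where $\widehat\phi^{\dpr}$ carries the $\phi$-entry.

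For $s\in[0,1]$ set $\mb{C}_s:=\mb{C}_0+s\widehat\phi$. Closedness of the degree-zero morphism $\phi$ gives $[\mb{C}_0^{\dpr},\widehat\phi^{\dpr}]=0$, and $(\widehat\phi^{\dpr})^2=0$, so $(\mb{C}_s^{\dpr})^2=0$ for all $s$; thus $(E^\bullet[1]\oplus F^\bullet,\mb{C}_s^{\dpr})$ is a cohesive module, namely $\mr{Cone}^\bullet(s\phi)$, and anti-self-adjointness of $\widehat\phi$ makes $\mb{C}_s$ its Chern superconnection for $h_C$. At $s=0$ this is the split module $E^\bullet[1]\oplus F^\bullet$ and at $s=1$ it is $\mr{Cone}^\bullet(\phi)$. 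Taking adjoints of the relation above yields $[\mb{C}_s^{\pr},\widehat\phi^{\pr}]=0$; together with $(\mb{C}_s^{\pr})^2=0$ (conjugate symmetry of the Chern superconnection) one gets $\mc{R}_s=[\mb{C}_s^{\dpr},\mb{C}_s^{\pr}]$ and the refined Bianchi identities $[\mb{C}_s^{\dpr},\mc{R}_s]=0=[\mb{C}_s^{\pr},\mc{R}_s]$.

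The heart of the argument is to show that $\tfrac{d}{ds}\tr f(\mc{R}_s)$ is $\del^X\delbar^X$-exact with primitive smooth in $s$. From $\dot{\mc{R}}_s=[\mb{C}_s,\widehat\phi]=[\mb{C}_s^{\dpr},\widehat\phi^{\pr}]+[\mb{C}_s^{\pr},\widehat\phi^{\dpr}]$ (the remaining two brackets vanish by the above), together with $[\mb{C}_s^{\dpr},f^{\pr}(\mc{R}_s)]=0=[\mb{C}_s^{\pr},f^{\pr}(\mc{R}_s)]$ and $\tr[\mb{C}_s^{\dpr},\,\cdot\,]=\delbar^X\tr(\,\cdot\,)$, $\tr[\mb{C}_s^{\pr},\,\cdot\,]=\del^X\tr(\,\cdot\,)$, one obtains $\tfrac{d}{ds}\tr f(\mc{R}_s)=\delbar^X\tr\{f^{\pr}(\mc{R}_s)\widehat\phi^{\pr}\}+\del^X\tr\{f^{\pr}(\mc{R}_s)\widehat\phi^{\dpr}\}$, in which the first summand is $\del^X$-closed and the second $\delbar^X$-closed. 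Upgrading this to $\del^X\delbar^X$ of an explicit form $P_s$ proceeds exactly as in the proof of the Donaldson transgression formula of Section 3, with $\widehat\phi$ in the role of the Maurer--Cartan form $\theta$: one introduces an auxiliary scaling parameter, applies the graded Jacobi identity and the divided-difference calculus for $f$, and reads off $P_s$. Integrating over $s\in[0,1]$ then gives $\tr f(\mc{R}_{\mb{C}_\phi})-\tr f(\mc{R}_{\mb{C}_0})=\del^X\delbar^X\int_0^1 P_s\,ds$, hence $f(\mr{Cone}^\bullet(\phi),\mb{C}_\phi)=f(E^\bullet[1]\oplus F^\bullet)$ in Bott--Chern cohomology.

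Finally, $\mb{C}_0$ is block-diagonal, so $\tr f(\mc{R}_{\mb{C}_0})=\tr f(\mc{R}^{\mb{E}[1]})+\tr f(\mc{R}^{\mb{F}})$; the degree shift reverses the parity of the grading operator, so $\tr f(\mc{R}^{\mb{E}[1]})=-\tr f(\mc{R}^{\mb{E}})$, whence $f(E^\bullet[1]\oplus F^\bullet)=-f(E^\bullet,\mb{E}^{\dpr})+f(F^\bullet,\mb{F}^{\dpr})$. Combining this with the previous step, and invoking the metric independence of the classes (Section 3) to discard the auxiliary $h_E,h_F$, yields the stated identity. The main obstacle is precisely the refinement from $d^X$-exactness to $\del^X\delbar^X$-exactness: the naive Chern--Weil transgression gives only $\tfrac{d}{ds}\tr f(\mc{R}_s)=d^X\tr\{f^{\pr}(\mc{R}_s)\widehat\phi\}$, and converting this into a manifestly $\del^X\delbar^X$-exact expression --- using anti-self-adjointness of $\widehat\phi$, closedness of $\phi$, the refined Bianchi identities, and the auxiliary transgression parameter --- is where the technical computations of Section 3 must be reproduced, with careful bookkeeping of signs in the supertrace and in the shift.
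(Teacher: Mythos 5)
Your overall strategy coincides with the paper's: interpolate linearly between the split module $E^{\bullet}[1]\oplus F^{\bullet}$ and $\mr{Cone}(\phi)$ via $\mb{C}^{\dpr}_s=\mb{C}^{\dpr}_0+s\widehat{\phi}^{\dpr}$, transgress $\tr f(\mc{R}_s)$ along this family, and use the additivity of the supertrace (with the sign flip coming from the shift) at the split endpoint. The setup, the refined Bianchi identities, and the identity $\tfrac{d}{ds}\tr f(\mc{R}_s)=\delbar^X\tr\{f^{\pr}(\mc{R}_s)\widehat{\phi}^{\pr}\}+\del^X\tr\{f^{\pr}(\mc{R}_s)\widehat{\phi}^{\dpr}\}$ are all fine.

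The gap is in the step you describe as ``proceeds exactly as in Section 3,'' specifically the assertion that the $\del^X\delbar^X$-primitive $P_s$ is smooth in $s$ on all of $[0,1]$. In Section 3 the mechanism that upgrades $d^X$-exactness to $\del^X\delbar^X$-exactness is that the variation of the superconnection is itself a commutator, $d^{\mM}\mb{E}_h=-[\mb{E}^{\pr}_h,\theta]$, with $\theta$ a globally defined smooth zeroth-order term. Here the variation is $\widehat{\phi}$, and to run the analogous argument you must exhibit $\widehat{\phi}^{\dpr}$ as $[\mb{C}^{\dpr}_s,\gamma^{\dpr}_s]$ for some $\gamma^{\dpr}_s$ of exotic degree $0$; the natural (and essentially forced) choice is $\gamma^{\dpr}_s=\tfrac{1}{s}\,\mr{diag}(0,\mr{Id})$, the rescaled block projection, which is singular exactly at the endpoint $s=0$ that you need to reach. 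The resulting primitive is $P_s=\tfrac{2}{s}\tr\{f^{\pr}(\mc{R}_s)\gamma^{\dpr}\}$, which is not integrable near $s=0$ as it stands, so your formula $\int_0^1 P_s\,ds$ is not defined. The paper devotes the second half of its proof to precisely this point: it writes $\mc{R}_s=\mc{R}_0+sA_s$, observes that $\tr\{f^{\pr}(\mc{R}_0)\gamma^{\dpr}\}$ is $d^X$-closed and hence annihilated by $\del^X\delbar^X$, subtracts it from the integrand so that the $1/s$ singularity cancels, and only then lets $s\to 0$. Without this regularization (or a substitute for it) the transgression from the cone to the split module does not close, and that is the one missing ingredient in your proposal.
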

	As a corollary, the characteristic classes defined for cohesive modules descend to $D^b_{\mr{Coh}}(X)$. In particular, this extends the Bott-Chern cohomology to coherent sheaves.
	
	\section{Hermitian Cohesive Modules and Chern superconnections}
	\subsection{Dg-category of cohesive modules}
	Let $X$ be a compact complex manifold, and let $(\mc{A}^{0, \bullet}(X),\delbar^X)$ be its Dolbeault differential graded algebra (dga). $\PA$ is the dg-category of cohesive modules over $(\Db,\delbar^X)$. We recall the definition of this dg-category from \cite{Block2010} below. Through this article, we work with double or triple $\mb{Z}$-graded objects and we write $\norm{\bullet}$ for the total degree. The commutators and traces are taken in the sense of superspaces.
	
	\begin{definition}
		A cohesive module $E=(E^{\bullet},\mb{E}^{\dpr})$ on $X$ consists of two data:
		\begin{enumerate}
			\item A finite dimensional $\mb{Z}$-graded complex vector bundle $E^{\bullet}$.
			
			\item A flat $\mb{Z}$-graded $\delbar^X$-superconnection $\mb{E}^{\dpr}$ of total degree 1.  More explicitly, $\mb{E}^{\dpr}$ is a $\mb{C}$-linear map $\mb{E}^{\dpr}: \mc{A}^{0, \bullet}(X, E^{\bullet}) \rightarrow \mc{A}^{0, \bullet}(X, E^{\bullet})$
			of total degree $1$ which satisfies both the $\delbar^X$-Leibniz formula:
			\begin{equation}\label{Leibiniz fromula of delbar superconnection}
				\mb{E}^{\dpr} e\otimes\omega = (\mb{E}^{\dpr}e)\wedge\omega + (-1)^{\norm{e}}e\otimes \delbar^X\omega
			\end{equation}
			for all $\omega \in \mc{A}^{0, \bullet}(X), e \in \mc{A}^0(X, E^{\bullet})$, and the flatness equation:
			\begin{equation}
				\mb{E}^{\dpr}\circ\mb{E}^{\dpr}=0
			\end{equation}
		\end{enumerate}
	\end{definition}
	
	\begin{remark}
		In equation \eqref{Leibiniz fromula of delbar superconnection}, we view the space of smooth sections of $E^{\bullet}$ as a right module over $\mc{A}^0(X)$. If we consider it as a left module by the isomorphism $I(e\otimes \omega) = (-1)^{\norm{e}\norm{\omega}} \omega \otimes e$, the induced superconnection $I(\mb{E}^{\dpr})$ satisfies the usual Leibniz formula: 
		\begin{equation}
			I(\mb{E}^{\dpr})(\omega \otimes e) = \delbar^X\omega \otimes e + (-1)^{\norm{\omega}} \omega \wedge I(\mb{E}^{\dpr})e
		\end{equation}
		We continue to work with the right module convention as in \cite{Block2010} so that the shift operation on the dg-category $\mc{P}_{\mc{A}^{0, \bullet}(X)}$ is simply taking $(E^{\bullet}, \mb{E})$ to $(E^{\bullet + 1}, -\mb{E})$.
	\end{remark}
	
	\begin{definition}
		The objects in the dg-category $\mc{P}_{\mc{A}^{0, \bullet}(X)}$ are cohesive modules defined above. The degree $k$ morphisms ${\mc{P}^{k}_{\mc{A}^{0, \bullet}(X)}}(E,F)$ between two cohesive modules $E=(E^{\bullet}, \mb{E}^{\dpr})$, $F=(F^{\bullet},\mb{F}^{\dpr})$ are $\mc{A}^{0, \bullet}(X)$-linear maps $\phi: \mc{A}^{0, \bullet}(X, E^{\bullet}) \rightarrow  \mc{A}^{0, \bullet}(X, F^{\bullet})$ of total degree $k$. The differential $d: \mc{P}^{k}_{\mc{A}^{0, \bullet}(X)} (E,F)\rightarrow \mc{P}^{k+1}_{\mc{A}^{0, \bullet}(X)}(E,F)$ is defined by the commutator:
		\begin{equation}
		d(\phi) = \mb{F}^{\dpr}\circ\phi-(-1)^k\phi\circ\mb{E}^{\dpr}
		\end{equation}
		It's simple to verify $d^2=0$ and therefore $\mc{P}_{\mc{A}^{0, \bullet}(X)}$ is a dg-category.
	\end{definition}
	
	\begin{example}
		If $(E^{\bullet}, \delta)$ is a complex of holomorphic vector bundles on $X$, it defines a cohesive module with cohesive structure defined by setting $\mb{E}^{\dpr}_0 = \delta$, $\mb{E}^{\dpr}_1 = (-1)^{\bullet}\delbar^{E^{\bullet}}$ and $\mb{E}^{\dpr}_k = 0$ for $k > 1$. The flatness condition is equivalent to the following set of equations: 
		\begin{align}
		&\delta \circ \delta=0   &(\delta \textrm{ defines a complex of vector bundles}) \\
		&\delta \circ \delbar^{E^{\bullet}} = \delbar^{E^{\bullet + 1}} \circ \delta 
		&(\delta \textrm{ are holomoprhic homomorphisms}) \\
		&\delbar^{E^{\bullet}} \circ \delbar^{E^{\bullet}} = 0
		&(\delbar^{E^{\bullet}}\ \textrm{defines a holomorphic structure on } E^{\bullet})
		\end{align}
	\end{example}
	
	\begin{remark}
		More generally, Block proved that the homotopy category of $\mc{P}_{\mc{A}^{0, \bullet}(X)}$ is equivalent to $D^b_{\mr{Coh}}(X)$ in \cite{Block2010}. That is, any complex of sheaves of $\mc{O}_X$-modules with coherent cohomology is represented by a cohesive module. In particular, any coherent sheaf is represented by a cohesive module $(E^{\bullet}, \mb{E}^{\dpr})$, unique up to homotopy, such that the underlying complex $(E^{\bullet}, \mb{E}^{\dpr}_0)$ is exact except at the end. 
	\end{remark}
	
	\begin{definition}
		A Hermitian metric $h$ on a cohesive module $E$ is a Hermitian metric on the $\mb{Z}$-graded bundle $E^{\bullet}$ such that the $E^{k}$ is orthogonal to $E^{l}$ if $k \neq l$. We denote by $(E, \mb{E}^{\dpr}, h)$ a Hermitian cohesive module.
	\end{definition}
	
	We define a conjugate linear involution on differential forms that is different from complex conjugation. It is motivated by the involution on Clifford algebras.
	
	\begin{definition}
		For $\omega \in \mc{A}^{k}(X)$ a complex differential form in degree $k$,  we define
		\begin{equation}
		\omega^*=(-1)^{\frac{(k+1)k}{2}}\overline{\omega}
		\end{equation}
	\end{definition}
	
	The following lemma summarizes the properties of $*$ and it is simple to verify them by the definition.
	\begin{lemma}\label{properties of star on forms}
		The $*$-operation is the unique operation on $\mc{A}^{\bullet}(X)$ such that $f^*=\bar{f}$ if $f$ is a smooth function,  $\omega^*=-\overline{\omega}$ if $\omega$ is an one form and in general,
		\begin{equation}
			(\omega\wedge\eta)^*=\eta^*\wedge\omega^*, \ \forall \omega, \eta \in \mc{A}^{\bullet}(X)
		\end{equation}
	\end{lemma}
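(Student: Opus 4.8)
The plan is a direct computation; the only real content is one sign identity about triangular numbers, together with a short induction for the uniqueness clause. First I would verify the two normalization conditions by specializing the defining formula $\omega^* = (-1)^{(k+1)k/2}\overline{\omega}$: at $k=0$ the sign is $+1$, so $f^* = \overline{f}$ for a smooth function; at $k=1$ the exponent is $(1\cdot 2)/2 = 1$, so the sign is $-1$ and $\omega^* = -\overline{\omega}$ for a one-form. Thus $*$ has the claimed values in degrees $0$ and $1$.

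Next I would establish the reversal rule $(\omega \wedge \eta)^* = \eta^* \wedge \omega^*$. By additivity of $*$ and bilinearity of $\wedge$ it suffices to take homogeneous $\omega \in \mc{A}^k(X)$ and $\eta \in \mc{A}^l(X)$. Write $t_m = m(m+1)/2$. Since ordinary complex conjugation is a ring homomorphism of $\mc{A}^\bullet(X)$, the left side is $(-1)^{t_{k+l}}\,\overline{\omega}\wedge\overline{\eta}$, while $\eta^*\wedge\omega^* = (-1)^{t_k+t_l}\,\overline{\eta}\wedge\overline{\omega} = (-1)^{t_k+t_l+kl}\,\overline{\omega}\wedge\overline{\eta}$, using $\overline{\eta}\wedge\overline{\omega} = (-1)^{kl}\,\overline{\omega}\wedge\overline{\eta}$. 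So the identity reduces to $t_{k+l} \equiv t_k + t_l + kl \pmod 2$, and in fact the exact equality $t_{k+l} = t_k + t_l + kl$ holds, since $2(t_{k+l} - t_k - t_l) = (k+l)(k+l+1) - k(k+1) - l(l+1) = 2kl$. This proves the product rule.

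For uniqueness I would argue by induction on degree that any additive operation $T$ on $\mc{A}^\bullet(X)$ with $T(f)=\overline{f}$ on functions, $T(\omega)=-\overline{\omega}$ on one-forms, and $T(\omega\wedge\eta)=T(\eta)\wedge T(\omega)$ must equal $*$. Degrees $0$ and $1$ are the hypotheses. In degree $k\ge 2$, work in a coordinate chart and write a homogeneous form as a finite sum of monomials $f\,e^{i_1}\wedge\cdots\wedge e^{i_k}$, with $f$ a function and $e^i$ a local coframe of one-forms; by additivity it is enough to treat one monomial. Applying the reversal rule repeatedly gives $T(f\,e^{i_1}\wedge\cdots\wedge e^{i_k}) = T(e^{i_k})\wedge\cdots\wedge T(e^{i_1})\wedge T(f) = (e^{i_k})^*\wedge\cdots\wedge(e^{i_1})^*\wedge f^*$, which by the (already proved) product rule for $*$ equals $(f\,e^{i_1}\wedge\cdots\wedge e^{i_k})^*$. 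Since $T$ and $*$ are local operators agreeing on each chart, they agree on $X$. The only point that demands attention is keeping the signs straight in this monomial expansion, but as every one of them is an instance of the single identity $t_{k+l}=t_k+t_l+kl$ there is no real obstacle; the lemma is elementary, its purpose being to pin down conventions for the Hermitian adjoint constructions in the sequel.
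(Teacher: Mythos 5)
Your proof is correct and is exactly the direct verification the paper has in mind (the paper simply asserts the lemma is ``simple to verify by the definition'' and gives no details); the whole content is the triangular-number identity $t_{k+l}=t_k+t_l+kl$, which you compute correctly. Your explicit addition of the hypothesis that the operation be additive (conjugate-linear) is needed for the uniqueness clause and is the intended reading of the statement.
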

	
	\begin{definition}
		We extend the $*$-operation to  $\mc{A}^{\bullet}(X,\End^{\bullet}E)$. If $L\otimes \omega$ is a homogeneous element, we define:
		\begin{equation}
		(L\otimes\omega)^*=(-1)^{\norm{L}\norm{\omega}}L^*\otimes\omega^*
		\end{equation}
		and extend it to $\mc{A}^{\bullet}(X,\End^{\bullet}E)$ by linearity.
	\end{definition}

	\begin{lemma}\label{properties of star on End valued forms}
		The operator $*$ extends to a conjugate linear involution on the algebra $\mc{A}^{\bullet}(X,\End^{\bullet}E)$.
	\end{lemma}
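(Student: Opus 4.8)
The plan is to verify the three properties that make $*$ a conjugate linear algebra involution: conjugate linearity, the identity $(\alpha^{*})^{*}=\alpha$, and anti-multiplicativity $(\alpha\beta)^{*}=\beta^{*}\alpha^{*}$. Throughout I would work with elements that are homogeneous for both gradings of $\mc{A}^{\bullet}(X,\End^{\bullet}E)$, i.e.\ products $L\otimes\omega$ with $L\in\End^{\bullet}E$ homogeneous and $\omega$ a form of pure degree; since every element is, in a unique way, a finite sum of such pieces and $*$ is declared additive on them, each identity reduces to the homogeneous case and well-definedness is automatic. The one input to recall here is that the Hermitian adjoint $L\mapsto L^{*}$ on $\End^{\bullet}E$ is itself well defined and sends a homogeneous endomorphism of degree $d$ to one of degree $-d$: this uses precisely that $h$ makes the summands $E^{k}$ mutually orthogonal, so that the adjoint of a map $E^{j}\to E^{j+d}$ lands in $\Hom(E^{j+d},E^{j})$.

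Conjugate linearity is immediate, since $\omega\mapsto\omega^{*}=(-1)^{k(k+1)/2}\overline{\omega}$ is conjugate linear on $\mc{A}^{k}(X)$, the Hermitian adjoint is conjugate linear, and the scalar $(-1)^{\norm{L}\norm{\omega}}$ is real. For the involution property I would compute $((L\otimes\omega)^{*})^{*}$ directly: applying the definition twice and pulling the real sign through gives $(-1)^{2\norm{L}\norm{\omega}}(L^{*})^{*}\otimes(\omega^{*})^{*}$, and this equals $L\otimes\omega$ because $(L^{*})^{*}=L$ for the adjoint with respect to a positive definite Hermitian metric, while $(\omega^{*})^{*}=\omega$ by Lemma~\ref{properties of star on forms} (or directly from the formula), using also $\norm{L^{*}}=\norm{L}$ and $\norm{\omega^{*}}=\norm{\omega}$.

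The one step that needs care is anti-multiplicativity, and there the only real issue is sign bookkeeping. By additivity of both sides in each variable it suffices to take $\alpha=L\otimes\omega$, $\beta=M\otimes\eta$ with $L,M$ of endomorphism degrees $d_{L},d_{M}$ and $\omega,\eta$ of form degrees $k,l$. Fixing the standard convention that makes $\mc{A}^{\bullet}(X,\End^{\bullet}E)$ a graded algebra, the product commutes the form factor past the endomorphism factor with a Koszul sign, $(L\otimes\omega)(M\otimes\eta)=(-1)^{k d_{M}}(LM)\otimes(\omega\wedge\eta)$. Applying the definition of $*$, the relation $(LM)^{*}=M^{*}L^{*}$ — which carries no extra super-sign, because $h$ is an honest positive definite Hermitian metric on the total bundle and the grading enters only through orthogonality of the $E^{k}$ — and Lemma~\ref{properties of star on forms} in the form $(\omega\wedge\eta)^{*}=\eta^{*}\wedge\omega^{*}$, one expands both sides: $((L\otimes\omega)(M\otimes\eta))^{*}=(-1)^{k d_{M}+(d_{L}+d_{M})(k+l)}(M^{*}L^{*})\otimes(\eta^{*}\wedge\omega^{*})$ and $(M\otimes\eta)^{*}(L\otimes\omega)^{*}=(-1)^{d_{M}l+d_{L}k+d_{L}l}(M^{*}L^{*})\otimes(\eta^{*}\wedge\omega^{*})$. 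Since $k d_{M}+(d_{L}+d_{M})(k+l)\equiv d_{L}k+d_{L}l+d_{M}l\equiv d_{M}l+d_{L}k+d_{L}l\pmod 2$, the cross term $k d_{M}$ being killed against the $2d_{M}k$ coming from $(d_{L}+d_{M})(k+l)$, the two sides agree, giving $(\alpha\beta)^{*}=\beta^{*}\alpha^{*}$ and completing the verification. This parity cancellation is the endomorphism-valued shadow of the unsigned wedge identity already recorded for forms, so the expected main obstacle is purely bookkeeping — pinning down the Koszul-sign convention once and for all and carrying it through consistently.
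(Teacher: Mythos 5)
Your verification is correct: the sign bookkeeping in the anti-multiplicativity computation checks out (the cross term $kd_M$ indeed cancels mod $2$), and your conventions for the product and for the unsigned adjoint $(LM)^*=M^*L^*$ are consistent with how the paper uses $*$ elsewhere (e.g.\ in the proof of Lemma~\ref{linear terms in Chern connection}). The paper omits the proof entirely as a routine check, and your argument is exactly the direct homogeneous-element verification that is intended.
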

	
	\begin{definition} 
		If $h$ is a Hermitian structure on the cohesive module $E^{\bullet}$, then we extend $h$ to $\mc{A}^{\bullet}(X, E^{\bullet})$ by the following formula on homogeneous elements:
		\begin{equation}
		h(e \otimes \omega, f \otimes \eta) = \omega^* \wedge h(e,f) \wedge \eta
		\end{equation}
		where $e,f\in \mc{A}^0(X, E^{\bullet})$ and  $\omega,\eta \in \mc{A}^{\bullet}(X)$ and extend it to $\mc{A}^{\bullet}(X, E^{\bullet})$ by linearity.
	\end{definition}
	
	The following lemma shows that the $*$ operation defines the adjoint operation on $\mc{A}^{\bullet}(X, \End E)$ with respect to $h$.
	\begin{lemma}\label{linear terms in Chern connection}
		If $A \in \mc{A}^{p, q}(X, \End^{d}E)$, then $A^* \in \mc{A}^{q, p}(X, \End^{-d}E)$ and for any $s, t \in \mc{A}^{\bullet}(X, E^{\bullet})$, we have
		\begin{equation}
		h(As, t) = h(s, A^*t)
		\end{equation}  
	\end{lemma}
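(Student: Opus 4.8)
The plan is to reduce both assertions to homogeneous elements and then unwind the three definitions involved (the $\mc{A}^{\bullet}(X,E^{\bullet})$-module structure on $\mc{A}^{\bullet}(X,\End^{\bullet}E)$, the $*$-operation on $\End$-valued forms, and the extension of $h$), the only real work being the bookkeeping of Koszul signs. By conjugate-linearity of $*$ and sesquilinearity of $h$ it suffices to take a homogeneous $A = L\otimes\omega$ with $L\in\End^{d}E$ and $\omega\in\mc{A}^{p,q}(X)$, acting on a homogeneous $s = e\otimes\alpha$ and paired against a homogeneous $t = f\otimes\beta$, where $e,f$ are sections of single summands $E^{k},E^{l}$ and $\alpha,\beta$ are forms.

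For the bidegree claim: the $*$-operation on scalar forms is complex conjugation up to a sign, hence sends $\mc{A}^{p,q}(X)$ to $\mc{A}^{q,p}(X)$ and preserves total degree; and since $h$ is block-diagonal for the $\mb{Z}$-grading of $E^{\bullet}$, the fibrewise adjoint $L^{*}$ of a degree-$d$ endomorphism has degree $-d$. By the definition $(L\otimes\omega)^{*}=(-1)^{\norm{L}\norm{\omega}}L^{*}\otimes\omega^{*}$, we conclude $A^{*}\in\mc{A}^{q,p}(X,\End^{-d}E)$.

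For the adjoint identity, expand both sides. On the left, the module action contributes a Koszul sign ($(-1)^{\norm{\omega}\norm{e}}$ in the right-module conventions of \cite{Block2010}) from moving $\omega$ past $e$; then the definition of $h$ on $\mc{A}^{\bullet}(X,E^{\bullet})$ together with Lemma \ref{properties of star on forms} rewrites $(\omega\wedge\alpha)^{*}$ as $\alpha^{*}\wedge\omega^{*}$, and the fibrewise relation $h(Le,f)=h(e,L^{*}f)$ defining $L^*$ turns the left side into $\pm\,\alpha^{*}\wedge\omega^{*}\wedge h(e,L^{*}f)\wedge\beta$. On the right, the sign $(-1)^{\norm{L}\norm{\omega}}$ from $A^{*}$ and one more Koszul sign from the action on $t$ appear; since $h(e,L^{*}f)$ is a smooth function it commutes with $\omega^{*}$, so $h(s,A^{*}t)$ reduces to the same expression up to a sign. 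Comparing, the identity becomes the congruence $\norm{\omega}\bigl(\norm{e}+\norm{L}+\norm{f}\bigr)\equiv 0\pmod 2$. Both sides vanish unless $h(e,L^{*}f)\neq 0$, which by block-diagonality of $h$ forces $\norm{e}+\norm{L}=\norm{f}$; then $\norm{e}+\norm{L}+\norm{f}=2\norm{f}$ is even, so the congruence holds and the identity follows.

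The only delicate point is keeping the three sign conventions mutually consistent; once that is arranged, the cancellation of signs is forced by the degree constraint coming from the block-diagonality of the metric, and no input beyond Lemmas \ref{properties of star on forms} and \ref{properties of star on End valued forms} is required.
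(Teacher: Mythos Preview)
Your proposal is correct and follows essentially the same route as the paper: reduce to homogeneous tensors $A=L\otimes\omega$, $s=e\otimes\alpha$, $t=f\otimes\beta$, unwind the module action, the $*$-operation, and the sesquilinear extension of $h$, and then observe that the residual sign discrepancy is $(-1)^{\norm{\omega}(\norm{e}+\norm{L}+\norm{f})}$, which vanishes because block-diagonality of $h$ forces $\norm{e}+\norm{L}=\norm{f}$ on the nonzero terms. The paper carries out the same computation line by line (with $\tau,\omega,\eta$ in place of your $\omega,\alpha,\beta$), arriving at the identical parity cancellation; your write-up is slightly more schematic but there is no substantive difference in strategy or in the lemmas invoked.
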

	\begin{proof}
		Without loss of generailty, we assume $A = L\otimes \tau, s = e\otimes\omega, t = f\otimes\eta$ are homogeneous. In addition, we assume $\norm{e} + \norm{L} = \norm{f}$ for otherwise both side of the equation are zero and equality holds. Under these assumptions, we may compute as:
		\begin{align*}
		h(As,t)
		=&h(L\otimes\tau(e\otimes\omega),f\otimes\eta)
		=(-1)^{\norm{\tau}\norm{e}}h(Le\otimes \tau\wedge\omega,f \otimes \eta)\\
		=&(-1)^{\norm{\tau}\norm{e}}(\tau\wedge\omega)^*h(Le,f)\eta
		=(-1)^{\norm{\tau}\norm{e}}\omega^* \wedge \tau^*h(e,L^*f)\eta\\
		=&(-1)^{\norm{\tau}\norm{e}}\omega^*h(e,L^*f) \tau^*  \wedge \eta
		=(-1)^{\norm{\tau}(\norm{e}+\norm{f})}\omega^*h(e,L^*\otimes\tau^*(f\otimes\eta))
		\end{align*}
		where for the third fourth equality, we used Lemma \ref{properties of star on forms}. By assumption, $\norm{e} + \norm{L} = \norm{f}$, so we have
		$\norm{\tau}(\norm{e}+\norm{f}) = \norm{\tau}\norm{L} \mr{\ mod \ }2$. Finally, since $A^{*} = (-1)^{\norm{L}\norm{\tau}}L^* \otimes \tau^*$ by definition, we have
		\begin{equation}
		h(As,t) = (-1)^{2\norm{L}\norm{\tau}}h(s,A^*t) = h(s, A^*t)
		\end{equation}
	\end{proof}
	
	\subsection{Existence and Uniqueness of Chern superconnection on a Hermitian cohesive module}
	The holomorphic Chern connection $\nabla^E$ for a holomorphic Hermitian vector bundle $E$ is the unique unitary connection on $E$ with $\delbar^X$-component given by the Dolbeault differential $\delbar^E$. We generalize this construction in this section to Hermitian cohesive modules.
	
	\begin{lemma}\label{Chern connection for single bundle}
		For a complex Hermitian vector bundle $(E, h)$ with an $\delbar^X$-connection $\nabla^{\dpr}$ which is not necessarily flat, there exist an unique $\del^X$-connection $\nabla^{\pr}$ such that 
		$\nabla =\nabla^{\pr} + \nabla^{\dpr}$ is unitary with respect to $h$. That is, for any $e, f \in \mc{A}^{\bullet}(X, E)$, $\nabla$ satisfies:
		\begin{equation}\label{unitary connection.1}
			d^Xh(e,f) = -h(\nabla{e},f) + h(e,\nabla{f})
		\end{equation}
	\end{lemma}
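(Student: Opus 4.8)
The plan is to reduce the problem to a pointwise linear-algebra statement and then invoke uniqueness to globalize. First I would fix a smooth section $f$ and treat both sides of \eqref{unitary connection.1} as expressions linear over $\mc{A}^{\bullet}(X)$ in the first slot, so that it suffices to determine $\nabla^{\pr}$ on $\mc{A}^0(X,E)$. Writing $\nabla^{\dpr} = \delbar^X_{h} + A^{\dpr}$ for some fixed $\delbar^X$-operator $\delbar^X_h$ compatible with $h$ (obtained, e.g., from any local unitary frame) with $A^{\dpr} \in \mc{A}^{0,1}(X,\End E)$, any candidate $\del^X$-connection has the form $\nabla^{\pr} = \del^X_h + A^{\pr}$ with $A^{\pr} \in \mc{A}^{1,0}(X,\End E)$; the unknown is $A^{\pr}$, and I would expand the unitarity condition for $\nabla = \del^X_h + \delbar^X_h + A^{\pr} + A^{\dpr}$ against the Leibniz-type identity that $\del^X_h + \delbar^X_h$ already satisfies.

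Next I would carry out the algebra. Subtracting off the part of \eqref{unitary connection.1} already satisfied by $\del^X_h + \delbar^X_h$ reduces the requirement to the purely tensorial identity $h(A^{\pr}e,f) + h(e,A^{\dpr}f) + \bigl(\text{conjugate-type terms}\bigr) = 0$, which by bidegree considerations separates into a $(1,0)$-part constraining $A^{\pr}$ and a $(0,1)$-part which is automatically the $*$-conjugate of the first (this is exactly the content built into Lemma \ref{linear terms in Chern connection} and Lemma \ref{properties of star on forms}: the adjoint of a $(0,1)$-form–valued endomorphism is $(1,0)$). The upshot is that $A^{\pr}$ must equal $-(A^{\dpr})^{*}$ — equivalently $\nabla^{\pr}$ is forced to be $-(\nabla^{\dpr})^{*}$ in the appropriate sense — which simultaneously proves uniqueness (the equation pins $A^{\pr}$ down completely) and existence (define $\nabla^{\pr} := \del^X_h - (A^{\dpr})^{*}$ and check it works). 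One then verifies $\nabla^{\pr}$ is independent of the auxiliary choice of $h$-compatible operator, since any two such choices differ by an $h$-skew endomorphism-valued $(1,0)$-form that gets absorbed consistently on both sides.

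An alternative, and perhaps cleaner, route is to avoid choosing an auxiliary compatible operator and argue directly: define $\nabla^{\pr}$ by declaring $h(\nabla^{\pr}e, f) := d^Xh(e,f)\big|_{(1,0)} + h(e,\nabla^{\dpr}f)\big|_{(1,0)}$ for $e,f \in \mc{A}^0(X,E)$ — the right side is $\mc{A}^{1,0}(X)$-valued and one checks it is $\mc{A}^0(X)$-linear in $f$ and satisfies the Leibniz rule in $e$, using that $h$ is nondegenerate to solve for $\nabla^{\pr}e$. Uniqueness is then immediate because any unitary $\nabla$ with the prescribed $\delbar^X$-component must satisfy this same defining relation after projecting \eqref{unitary connection.1} to bidegree $(1,0)$. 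Extending from degree-zero sections to all of $\mc{A}^{\bullet}(X,E)$ is forced by the Leibniz rule and is a routine bookkeeping of signs coming from the $*$-operation.

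The main obstacle I anticipate is the sign bookkeeping: because the Hermitian pairing involves the $*$-operation (so $h$ is $\mc{A}^{\bullet}(X)$-sesquilinear with the twisted conjugation $\omega \mapsto \omega^*$ rather than ordinary conjugation), the factors $(-1)^{k(k+1)/2}$ must be tracked carefully when one checks that the $(0,1)$-component of the unitarity equation is genuinely the adjoint of the $(1,0)$-component and imposes no further constraint. This is where an error would most easily creep in; I would handle it by consistently reducing to homogeneous tensors $e\otimes\omega$ and quoting Lemma \ref{properties of star on forms} and Lemma \ref{linear terms in Chern connection} rather than recomputing conjugation signs by hand. Everything else — linearity, the Leibniz rule, nondegeneracy of $h$ forcing solvability — is standard and parallels the classical holomorphic Chern connection argument.
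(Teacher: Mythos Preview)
Your plan is correct and at its core matches the paper's: decompose the unitarity condition by bidegree, use one half to solve for $\nabla^{\pr}$, and verify the other half is the $*$-conjugate of the first and hence automatic. The difference is in packaging. The paper works in an \emph{arbitrary} local frame $s$ (not a unitary one), writes $\nabla^{\dpr}s=s\otimes\Theta$ and $\mc{H}=h(s,s)$, solves the $(1,0)$-equation explicitly as $\Omega=\mc{H}^{-1}\del^X\mc{H}+\mc{H}^{-1}\Theta^*\mc{H}$, and then checks the companion $(0,1)$-equation by direct computation; this explicit local formula is what feeds into the later computation of $d^{\mc{M}}\mE_h$ in Proposition~\ref{derivative of connection}. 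Your frame-free formulations are conceptually cleaner but do not immediately produce that formula. Two sign slips of exactly the kind you anticipated: with the paper's $*$ and Lemma~\ref{linear terms in Chern connection} the tensorial condition reads $(A^{\pr}+A^{\dpr})^*=A^{\pr}+A^{\dpr}$, giving $A^{\pr}=(A^{\dpr})^*$ without the minus (cf.\ the proof of Proposition~\ref{Chern superconnection existence}, where $\mE^{\pr}_k=(\mE^{\dpr}_k)^*$); and in your direct variant, since $\omega^*=-\bar\omega$ on $1$-forms in the first slot, $h(\nabla^{\pr}e,f)$ lands in bidegree $(0,1)$ for degree-zero $e,f$, so the defining relation should be the $(0,1)$-projection $h(\nabla^{\pr}e,f)=-\delbar^X h(e,f)+h(e,\nabla^{\dpr}f)$ rather than the $(1,0)$-projection you wrote. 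Neither issue affects the logic of the argument.
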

	
	\begin{proof}
		The problem is local, so it suffices to construct $\nabla$ and prove its uniqueness locally. Choose a local frame $s=(s_1,s_2,...s_n)$ on $E$, let $\Theta \in \mc{A}^{0,1}(X,\End{E})$ be the connection $(0, 1)$-form
		such that $\nabla^{\dpr}s = s \otimes \Theta$. With respect to the frame $s$, the Hermitian metric $h$ is represented by the Hermitian matrix valued function $\mc{H}=h(s,s)$. Since any $(1,0)$-connection $\nabla^{\pr}$ is locally represented by its connection $(1,0)$-form $\Omega \in \mc{A}^{1,0}(X,\End{E})$ such that $\nabla^{\pr}s=s \otimes \Omega$, if we set $\nabla=\nabla^{\pr} + \nabla^{\dpr}$, the condition for $\nabla$ being unitary is equivalent to:
		\begin{equation}\label{unitary connection.2}
			d^X \mc{H} = -(\Theta+\Omega)^{*}\mc{H} + \mc{H}(\Theta+\Omega)
		\end{equation}
		Comparing the $(1,0)$ and $(0,1)$ component of equation (\ref{unitary connection.2}) above, we see (\ref{unitary connection.1}) is equivalent to 
		\begin{equation}\label{unitary connection.3}
			\del^X\mc{H} = -\Theta^*\mc{H} + \mc{H}\Omega
		\end{equation}
		\begin{equation}\label{unitary connection.4}
			\delbar^X\mc{H} = -\Omega^*\mc{H} + \mc{H}\Theta
		\end{equation}
		Using (\ref{unitary connection.3}), we can solve for $\Omega$ as
		\begin{equation}\label{unitary connection.5}
			\Omega=\mc{H}^{-1} \del^X \mc{H} + \mc{H}^{-1} \Theta^*\mc{H}
		\end{equation}
		which shows the uniqueness of $\nabla^{\pr}$. But we can also use (\ref{unitary connection.5}) as the definition of $\Omega$. It remains to verify that with $\Omega$ so defined, equation (\ref{unitary connection.4}) is satisfied. Since $\mc{H}$ is a Hermitian matrix,  $\mc{H}^*=\mc{H}$ and we can now compute $\Omega^*$ as:
		\begin{align*}
		\Omega^*
		=&(\del^X\mc{H})^*(\mc{H}^{-1})^*+\mc{H}^{*}\Theta^{**}(\mc{H}^{-1})^*\\
		=&-\delbar^X\mc{H}^*(\mc{H}^*)^{-1}+\mc{H}^*\Theta(\mc{H}^*)^{-1}\\
		\numberthis\label{unitary connection.6}
		=&-(\delbar^X\mc{H})\mc{H}^{-1} + \mc{H}\Theta\mc{H}^{-1}
		\end{align*}
		Multiplying $\mc{H}$ on the right, we get \eqref{unitary connection.4}.
	\end{proof}
	
	\begin{remark}
		The above equation \eqref{unitary connection.1} differs by a minus sign from the ordinary equation for unitary connections:
		\begin{equation}{\label{ordinary unitary connection}}
			d^Xh(e,f)=h(\nabla e,f)+h(e,\nabla f)
		\end{equation}
		This is caused by our extension of $h$ to $\mc{A}^{\bullet}(X, E^{\bullet})$. In the ordinary case, if $s\otimes \Omega$ is a one form with values in $E$, then $h(s\otimes \Omega, t) = \overline{\Omega}h(s, t)$. However, by our definition,  $h(s\otimes \Omega, t) = \Omega^*h(s, t) = -\overline{\Omega}h(s, t)$.
	\end{remark} 
	
	\begin{definition}
		We define an exotic grading on the spaces   $\mc{A}^{\bullet}(X,\End^{\bullet}E)$ and $\mc{A}^{\bullet}(X,E^{\bullet})$. For an element $A=L\otimes\tau$, where $L\in \End^d(E^\bullet)$ or $L \in E^d$, and $\tau \in \mc{A}^{p,q}(X)$, define its exotic degree by $\deg(A)= -p+q+d$. $\mc{A}^{\bullet}(X)$ inherit an exotic grading as well given by $\deg(\tau) = - p + q$. We denote by $\mc{G}^{k}$ the subspaces of exotic degree $k$. In particular, the subspace $\mc{G}^0$ in $\mc{A}^{\bullet}(X)$ consists of forms with bi-degree $(p, p)$.  
	\end{definition}
	
	\begin{lemma} 
		With respect to the exotic grading, $\mc{A}^{\bullet}(X)$ is a $\mb{Z}$-graded algebra; $\mc{A}^{\bullet}(X, \End^{\bullet} E)$ is a $\mb{Z}$-graded algebra and $\mb{Z}$-graded module over $\mc{A}^{\bullet}(X)$; $\mc{A}^{\bullet}(X, E^{\bullet})$ is both a $\mb{Z}$-graded module over $\mc{A}^{\bullet}(X)$ where the action is exterior product, and a $\mb{Z}$-graded module over $\mc{A}^{\bullet}(X, \End^{\bullet} E)$ where the action is evaluation. In addition, the $*$-operator defined before on $\mc{A}^{\bullet}(X)$ and $\mc{A}^{\bullet}(X, \End^{\bullet}E)$ interchanges $\mc{G}^{\bullet}$ with $\mc{G}^{-\bullet}$.
	\end{lemma}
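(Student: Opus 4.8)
The plan is to observe that all four assertions reduce to a single bookkeeping principle. Every structure map in play --- the wedge product on $\mc{A}^{\bullet}(X)$, the composition product on $\mc{A}^{\bullet}(X,\End^{\bullet}E)$, exterior multiplication of an $E^{\bullet}$-valued form by a scalar form, and evaluation of an $\End^{\bullet}E$-valued form on an $E^{\bullet}$-valued form --- is assembled from three elementary operations on the defining data of a homogeneous element $L\otimes\tau$ (where $\tau\in\mc{A}^{p,q}(X)$ and $L\in\End^{d}(E^{\bullet})$ or $L\in E^{d}$): (i) the wedge product, with $\mc{A}^{p,q}(X)\wedge\mc{A}^{p',q'}(X)\subseteq\mc{A}^{p+p',q+q'}(X)$; (ii) composition of endomorphisms, with $\End^{d}(E^{\bullet})\circ\End^{d'}(E^{\bullet})\subseteq\End^{d+d'}(E^{\bullet})$; and (iii) evaluation, under which $\End^{d}(E^{\bullet})$ carries $E^{d'}$ into $E^{d+d'}$. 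In each of these the two form indices and the endomorphism (or bundle) degree are additive, hence so is $\deg(L\otimes\tau)=-p+q+d$. The Koszul signs $(-1)^{\norm{\tau}\norm{L'}}$ and $(-1)^{\norm{\tau}\norm{e}}$ built into the definitions of these products are scalars and leave bidegrees untouched, so they are irrelevant to the grading statement.

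Concretely, for $\mc{A}^{\bullet}(X)$ I would take homogeneous $\omega\in\mc{A}^{p,q}(X)$, $\eta\in\mc{A}^{p',q'}(X)$ and record $\deg(\omega\wedge\eta)=-(p+p')+(q+q')=\deg\omega+\deg\eta$; associativity and the unit are inherited from the de Rham algebra, so $\mc{A}^{\bullet}(X)=\bigoplus_{k}\mc{G}^{k}$ is a $\mb{Z}$-graded algebra with $\mc{G}^{0}$ the $(p,p)$-forms. For $\mc{A}^{\bullet}(X,\End^{\bullet}E)$ with product $(L\otimes\tau)(L'\otimes\tau')=(-1)^{\norm{\tau}\norm{L'}}(LL')\otimes(\tau\wedge\tau')$, combining (i) and (ii) gives additivity of $\deg$, and the $\mc{A}^{\bullet}(X)$-module structures are the special cases $L'=\mr{id}$ (or $L=\mr{id}$). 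For $\mc{A}^{\bullet}(X,E^{\bullet})$, exterior multiplication by $\mc{A}^{\bullet}(X)$ uses only (i), and evaluation by $\mc{A}^{\bullet}(X,\End^{\bullet}E)$ uses (i) and (iii); in both cases $\deg$ is additive and the module axioms descend from their ungraded counterparts. This gives the graded algebra and graded module claims.

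For the statement about $*$, I would use that complex conjugation sends $\mc{A}^{p,q}(X)$ to $\mc{A}^{q,p}(X)$, so $\tau\in\mc{A}^{p,q}(X)$ gives $\tau^{*}\in\mc{A}^{q,p}(X)$ and $\deg(\tau^{*})=-q+p=-\deg(\tau)$; thus $*$ carries $\mc{G}^{k}$ to $\mc{G}^{-k}$ on $\mc{A}^{\bullet}(X)$. On $\mc{A}^{\bullet}(X,\End^{\bullet}E)$, Lemma~\ref{linear terms in Chern connection} already records that $A\in\mc{A}^{p,q}(X,\End^{d}E)$ implies $A^{*}\in\mc{A}^{q,p}(X,\End^{-d}E)$, whence $\deg(A^{*})=-q+p-d=-(-p+q+d)=-\deg(A)$, so $*$ interchanges $\mc{G}^{\bullet}$ with $\mc{G}^{-\bullet}$ here as well.

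There is no genuine obstacle: the only points requiring care are the simultaneous bookkeeping of the two form indices together with the endomorphism (or bundle) degree, and checking that the superalgebra sign conventions are scalar-valued and hence grading-neutral. One should also note that the exotic grading is an honest direct-sum decomposition, obtained by regrouping the finite direct sum $\bigoplus_{p,q,d}\mc{A}^{p,q}(X,\End^{d}E)$ according to the value of $-p+q+d$ (and similarly for $\mc{A}^{\bullet}(X,E^{\bullet})$ and $\mc{A}^{\bullet}(X)$), which is the last routine verification.
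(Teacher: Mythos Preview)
Your proposal is correct and follows essentially the same approach as the paper: the paper's proof simply states that one verifies $\mc{G}^{i}\cdot\mc{G}^{j}\subseteq\mc{G}^{i+j}$ for each of the products and actions in question, which is exactly the additivity of the exotic degree that you spell out in detail. Your treatment is in fact more complete, since the paper's proof does not explicitly address the claim about $*$ interchanging $\mc{G}^{\bullet}$ with $\mc{G}^{-\bullet}$, whereas you correctly derive it from the bidegree behavior of complex conjugation and from Lemma~\ref{linear terms in Chern connection}.
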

	\begin{proof}
		It's simple to verify that the subspaces $\mc{G}^{\bullet}$ respects all these structures in the sense that
		\begin{equation}
			\mc{G}^{i}\cdot \mc{G}^j \subseteq \mc{G}^{i+j}
		\end{equation}
		holds whenever the composition is defined, either by the algebra multiplication or module action.
	\end{proof}
	
	\begin{proposition}[Chern superconnection]\label{Chern superconnection existence}
		Let $E = (E^{\bullet}, \mb{E}^{\dpr}, h)$ be a Hermitian cohesive module. There exist an unique $\del^X$-superconnection
		$\mb{E}^{\pr}: \mc{A}^{0}(X, E^{\bullet}) \rightarrow \mc{A}^{\bullet,0}(X, E^{\bullet})$ of exotic degree $-1$
		such that the superconnection $\mb{E}=\mb{E}^{\pr}+\mb{E}^{\dpr}$ is unitary. That is for any $ s,t \in \mc{A}^{\bullet}(X, E^{\bullet})$, we have
		\begin{equation}\label{unitary superconnection}
		(-1)^{\norm{s}}d^X h(s,t)=-h(\mb{E}s,t)+h(s,\mb{E}t)
		\end{equation}
	\end{proposition}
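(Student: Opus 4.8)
The plan is to follow closely the proof of Lemma~\ref{Chern connection for single bundle}, the new feature being that $\mb{E}^{\dpr}$ now has components in every antiholomorphic bidegree $(0,k)$, not just in $(0,1)$. Equation \eqref{unitary superconnection} is a pointwise condition and a $\del^X$-superconnection is a local operator, so it suffices to construct $\mb{E}^{\pr}$ and establish its uniqueness over a trivializing chart $U$; local uniqueness then forces the solutions over different charts to agree on overlaps and to glue to a global operator.

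First I would fix a local frame $s=(s_1,\dots,s_n)$ of $E^{\bullet}$ adapted to the $\mb{Z}$-grading and write $\mb{E}^{\dpr}s=s\otimes\Theta$ with $\Theta=\sum_{k\ge 0}\Theta_k$, $\Theta_k\in\mc{A}^{0,k}(U,\End^{1-k}E)$ a matrix of $(0,k)$-forms (so $\Theta_0=\mb{E}^{\dpr}_0$, $\Theta_1$ is the matrix of the $\delbar$-connection $\mb{E}^{\dpr}_1$, and so on), and $\mc{H}=h(s,s)$ for the block-diagonal Hermitian matrix of $h$. An arbitrary $\del^X$-superconnection of the stipulated type is $\mb{E}^{\pr}s=s\otimes\Omega$ with $\Omega=\sum_{k\ge 0}\Omega_k$, $\Omega_k\in\mc{A}^{k,0}(U,\End^{k-1}E)$ (extended off the frame by the $\del^X$-Leibniz rule). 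Substituting frame sections into \eqref{unitary superconnection} and unwinding the definitions of $h$ on $\mc{A}^{\bullet}(X,E^{\bullet})$ and of $*$ on $\mc{A}^{\bullet}(X,\End^{\bullet}E)$ via Lemmas~\ref{properties of star on forms}, \ref{properties of star on End valued forms} and \ref{linear terms in Chern connection}, equation \eqref{unitary superconnection} becomes, after absorbing the grading sign $(-1)^{\norm{\bullet}}$, a matrix identity of the same shape as \eqref{unitary connection.2}:
\begin{equation}
d^X\mc{H} = -(\Theta+\Omega)^{*}\mc{H} + \mc{H}(\Theta+\Omega).
\end{equation}

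I would then decompose this identity by bidegree. Since $\mc{H}$ has type $(0,0)$, $\Theta_k^{*}\in\mc{A}^{k,0}$, $\Omega_k^{*}\in\mc{A}^{0,k}$, and $d^X\mc{H}$ is supported in bidegrees $(1,0)$ and $(0,1)$, the $(k,0)$-part of the identity reads $\del^X\mc{H}=-\Theta_1^{*}\mc{H}+\mc{H}\Omega_1$ when $k=1$ and $0=-\Theta_k^{*}\mc{H}+\mc{H}\Omega_k$ when $k\ne 1$ (in bidegree $(0,0)$ one splits further by $\End$-degree). Each of these solves uniquely for $\Omega_k$, giving
\begin{equation}
\Omega_1=\mc{H}^{-1}\del^X\mc{H}+\mc{H}^{-1}\Theta_1^{*}\mc{H},\qquad \Omega_k=\mc{H}^{-1}\Theta_k^{*}\mc{H}\ \ (k\ne 1),
\end{equation}
which proves uniqueness. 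Taking these as definitions produces a candidate $\mb{E}^{\pr}$; it has exotic degree $-1$ because each $\Omega_k\in\mc{A}^{k,0}(U,\End^{k-1}E)$, and it obeys the $\del^X$-Leibniz rule precisely because of the inhomogeneous term $\mc{H}^{-1}\del^X\mc{H}$ in $\Omega_1$ (the other $\Omega_k$ are honest $\End$-valued forms, hence $\mc{A}^{\bullet}(X)$-linear). It then remains to check the complementary $(0,k)$-parts $\delbar^X\mc{H}=-\Omega_1^{*}\mc{H}+\mc{H}\Theta_1$ and $0=-\Omega_k^{*}\mc{H}+\mc{H}\Theta_k$ $(k\ne 1)$: these follow by applying $*$ to the $(k,0)$-equations and using $\mc{H}^{*}=\mc{H}$, $(\mc{H}^{-1})^{*}=\mc{H}^{-1}$ and $**=\mr{id}$, exactly as in the derivation of \eqref{unitary connection.6}. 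Conceptually this is forced: $*$ interchanges $\mc{G}^{\bullet}$ with $\mc{G}^{-\bullet}$ and so carries each $\del^X$-component of the identity to the corresponding $\delbar^X$-component. A final appeal to local uniqueness patches the charts into the desired global $\mb{E}^{\pr}$.

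The difficulty here is organizational rather than conceptual: one must keep track of the grading sign $(-1)^{\norm{s}}$ in \eqref{unitary superconnection}, the superspace signs in the rule $(AB)^{*}=\pm\,B^{*}A^{*}$ on $\mc{A}^{\bullet}(X,\End^{\bullet}E)$, and the $(-1)^{\bullet}$ twist implicit in the passage between $\delbar$-connections and $\delbar^X$-superconnections, and then verify that the bidegree components of the matrix identity genuinely decouple so that the pieces $\Omega_k$ assemble into a single $\del^X$-superconnection. No flatness condition enters, since $\mb{E}$ is required only to be unitary, not a flat superconnection.
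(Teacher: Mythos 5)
Your proposal is correct and follows essentially the same route as the paper: decompose $\mb{E}^{\dpr}$ by antiholomorphic degree, obtain the $(1,0)$-component from the classical Chern-connection formula $\Omega_1=\mc{H}^{-1}\del^X\mc{H}+\mc{H}^{-1}\Theta_1^{*}\mc{H}$, and take the remaining components to be the $h$-adjoints $\mc{H}^{-1}\Theta_k^{*}\mc{H}$ of the tensorial terms $\mb{E}^{\dpr}_k$. The only difference is organizational: the paper performs the $k\neq 1$ step globally by invoking Lemma \ref{linear terms in Chern connection} (so only the $k=1$ step, already settled by Lemma \ref{Chern connection for single bundle}, requires a local frame), whereas you carry out the entire argument in local frames and then patch by uniqueness.
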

	
	\begin{proof}
		There is a decomposition of $\mb{E}^{\dpr} = \mb{E}^{\dpr}_1 + \sum_{k \neq 1} \mb{E}^{\dpr}_k$ into an ordinary $\delbar^X$-connection $\mb{E}^{\dpr}_1$ on $E^{\bullet}$ and linear terms $\mb{E}^{\dpr}_k \in \mc{A}^{0, k}(X, \End^{1-k}E)$. 
		
		By lemma \ref{Chern connection for single bundle}, there is an unique $\del^X$-connection $\mb{E}^{\pr}_1$ on $E^{\bullet}$ such that $\nabla^{E^{\bullet}} = \mb{E}^{\dpr}_1 + \mb{E}^{\pr}_1$ is unitary on the graded Hermitian complex vector bundle $E^{\bullet}$. If we set $\mb{E}^{\pr}_k = (\mb{E}^{\dpr}_k)^*$ for $k \neq 1$, by lemma \ref{linear terms in Chern connection}, $\mb{E}_k = \mb{E}^{\dpr}_k + \mb{E}^{\pr}_k$ is unitary. Adding $\nabla^{E^{\bullet}}$ and $\mb{E}_k$, the resulting $d^X$-superconnection $\mb{E}$ is unitary. Uniqueness is obvious from the construction and equation \eqref{unitary superconnection}.
	\end{proof}
	
	\begin{remark}
		We can always write a homogeneous term $A=L\otimes\tau$ in such a way that $\tau$ is a real form of degree $k$. Since $A$ is odd, $A^*$ is just $L^*\otimes \tau^*$ and therefore $A = A^*$ is equivalent to $L=(-1)^{\frac{k(k+1)}{2}}L^*$. In other words, $L$ satisfies the following conditions:	
		$$\begin{cases}
		L \textrm{ is Hermitian }  &k=0,3\  (\textrm{mod } 4)\\
		L \textrm{ is skew Hermitian } &k=1,2\  (\textrm{mod } 4)\\
		\end{cases}$$
		In particular, this is compatible with the well known fact that an unitary connection has skew Hermitian connection matrix.
	\end{remark}

	\subsection{Chern-Weil construction of characteristic forms}
	
	\begin{definition}
		Let $\mb{E}$ be the superconnection for a Hermitian cohesive module $(E^{\bullet}, \mb{E}^{\dpr}, h)$ given by Proposition \ref{Chern superconnection existence}. We call it the Chern superconnection. The curvature of $\mb{E}$ is defined by the usual formula 
		\begin{equation}
		\mc{R} = \mb{E}^2 = \frac{1}{2}[\mb{E}, \mb{E}]
		\end{equation}
	\end{definition}
	
	In the classical case of Hermitian holomorphic vector bundles, the curvature of the Chern connection is a $(1,1)$-form. In the case of Hermitian cohesive modules, we have similar properties of Chern superconnections for cohesive modules.
	
	\begin{lemma}\label{curvature is degree 0}
		If $\mb{E}$ is the Chern superconnection for a Hermitian cohesive module $(E^{\bullet}, \mb{E}, h)$ and $\mc{R}$ is its curvature, then it satisfies the following properties:
		\begin{align}
		&\mc{R}^* = \mc{R}
		\\
		&\mb{E}^{\pr}\circ\mb{E}^{\pr} = 0
		\\
		&\mc{R}=[\mb{E}^{\pr},\mb{E}^{\dpr}]
		\end{align}
		Consequently, the curvature $\mc{R}$ is of exotic degree zero $\mc{R} \in \mc{G}^0$.
	\end{lemma}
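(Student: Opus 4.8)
The plan is to deduce the three displayed identities from the defining unitarity relation \eqref{unitary superconnection}, the flatness equation $\mb{E}^{\dpr}\circ\mb{E}^{\dpr}=0$ built into the definition of a cohesive module, and the fact that $*$ interchanges $\mc{G}^{k}$ with $\mc{G}^{-k}$. I would first record the purely algebraic decomposition
\begin{equation}
	\mc{R}=\mb{E}^{2}=\mb{E}^{\pr}\circ\mb{E}^{\pr}+[\mb{E}^{\pr},\mb{E}^{\dpr}]+\mb{E}^{\dpr}\circ\mb{E}^{\dpr},
\end{equation}
in which the last summand vanishes by flatness, and note that all three summands are tensorial, i.e.\ lie in $\mc{A}^{\bullet}(X,\End^{\bullet}E)$. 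Since $\mb{E}^{\pr}$ has exotic degree $-1$ by Proposition \ref{Chern superconnection existence}, while $\mb{E}^{\dpr}$ has exotic degree $+1$ (its components $\mb{E}^{\dpr}_{k}\in\mc{A}^{0,k}(X,\End^{1-k}E)$ each have exotic degree $-0+k+(1-k)=1$, and the connection part $\mb{E}^{\dpr}_{1}$ raises exotic degree by $1$ as well), the three summands lie in $\mc{G}^{-2}$, $\mc{G}^{0}$ and $\mc{G}^{2}$ respectively; in particular $\mc{R}$ has no component in $\mc{G}^{2}$.

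The heart of the proof is the identity $\mc{R}^{*}=\mc{R}$, which I would obtain by iterating \eqref{unitary superconnection}. Applying $d^{X}$ to both sides of \eqref{unitary superconnection} and using $(d^{X})^{2}=0$ gives
\begin{equation}
	d^{X}h(\mb{E}s,t)=d^{X}h(s,\mb{E}t)
\end{equation}
for all $s,t\in\mc{A}^{\bullet}(X,E^{\bullet})$. Substituting $\mb{E}s$ for $s$, and separately $\mb{E}t$ for $t$, into \eqref{unitary superconnection}, and then adding the two resulting equations, the cross terms $h(\mb{E}s,\mb{E}t)$ cancel and the $d^{X}$-terms cancel by the previous identity, leaving
\begin{equation}
	h(\mc{R}s,t)=h(s,\mc{R}t)\qquad\text{for all }s,t.
\end{equation}
Because $\mc{R}$ is tensorial, Lemma \ref{linear terms in Chern connection} (which identifies $\mc{R}^{*}$ as the unique $*$-adjoint of $\mc{R}$) together with the non-degeneracy of $h$ then forces $\mc{R}^{*}=\mc{R}$.

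Finally, writing $\mc{R}=R_{-2}+R_{0}+R_{2}$ for the exotic-degree decomposition, so that $R_{-2}=\mb{E}^{\pr}\circ\mb{E}^{\pr}$, $R_{0}=[\mb{E}^{\pr},\mb{E}^{\dpr}]$ and $R_{2}=\mb{E}^{\dpr}\circ\mb{E}^{\dpr}=0$, I would read off $\mc{R}^{*}=\mc{R}$ in exotic degree $-2$: since $*$ sends $\mc{G}^{2}$ to $\mc{G}^{-2}$, the $\mc{G}^{-2}$-component of $\mc{R}^{*}$ is $R_{2}^{*}=0$, whereas the $\mc{G}^{-2}$-component of $\mc{R}$ is $R_{-2}=\mb{E}^{\pr}\circ\mb{E}^{\pr}$, hence $\mb{E}^{\pr}\circ\mb{E}^{\pr}=0$. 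Then $\mc{R}=R_{0}=[\mb{E}^{\pr},\mb{E}^{\dpr}]\in\mc{G}^{0}$, which yields all three identities and the concluding assertion $\mc{R}\in\mc{G}^{0}$. The point that needs care is the substitution step in the middle paragraph: $\mb{E}$ is not homogeneous for the $\mb{Z}$-grading (the piece $\mb{E}^{\pr}_{k}$ shifts total degree by $2k-1$), but every homogeneous component of $\mb{E}s$ has the same parity $\norm{s}+1$, so the sign $(-1)^{\norm{s}}$ occurring in \eqref{unitary superconnection} is unambiguous and the substitutions $s\mapsto\mb{E}s$ and $t\mapsto\mb{E}t$ are legitimate.
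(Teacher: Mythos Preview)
Your proof is correct and follows essentially the same approach as the paper: decompose $\mc{R}$ into its exotic-degree pieces, iterate the unitarity relation \eqref{unitary superconnection} (equivalently, apply $(d^{X})^{2}=0$ to $h(s,t)$) to obtain $h(\mc{R}s,t)=h(s,\mc{R}t)$ and hence $\mc{R}^{*}=\mc{R}$ via Lemma~\ref{linear terms in Chern connection}, and then compare exotic degrees to kill $(\mb{E}^{\pr})^{2}$. Your extra remarks on the parity of $\mb{E}s$ and the explicit $R_{-2},R_{0},R_{2}$ bookkeeping simply make explicit what the paper leaves implicit.
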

	
	\begin{proof} 
		By definition, the curvature is
		\begin{align*}
			\mc{R}
			= &\mb{E}\circ \mb{E}
			= (\mb{E}^{\pr} + \mb{E}^{\dpr})\circ (\mb{E}^{\pr} + \mb{E}^{\dpr}) \\
			= &(\mb{E}^{\pr})^2 + [\mb{E}^{\pr},\mb{E}^{\dpr}] + (\mb{E}^{\dpr})^2 \\
			= &(\mb{E}^{\pr})^2 + [\mb{E}^{\pr},\mb{E}^{\dpr}]
		\end{align*}
		where the term $(\mb{E}^{\dpr})^2$ vanishes by our flatness assumption on $\mb{E}^{\dpr}$. Since the exotic degree of $\mb{E}^{\pr}$ is $-1$ and that of $[\mb{E}^{\pr},\mb{E}^{\dpr}]$ is 0, $(\mb{E}^{\pr})^2 \in \mc{G}^{-2}$ and $[\mb{E}^{\pr},\mb{E}^{\dpr}]\in \mc{G}^0$.
		We use equation \eqref{unitary superconnection} repeatedly to get:
		\begin{align*}
			0 
			= &(d^X)^2{h}(s, t) = (-1)^{\norm{s} + 1} d^X h(\mb{E}s, t) + (-1)^{\norm{s}}d^X h(s, \mb{E}t)\\
			=&-h(\mc{R}s, t) + h(\mb{E}s,\mb{E}t) - h(\mb{E}s, \mb{E}t) + h(s, \mc{R}t)\\
			=&-h(\mc{R}s, t) + h(s,\mc{R}t)
		\end{align*}
		Since $\mc{R}$ is linear, by Lemma \ref{linear terms in Chern connection}, the curvature $\mc{R}$ is self adjoint $\mc{R}=\mc{R}^*$. Since $*$ maps $\mc{G}^{-2}$ to $\mc{G}^2$, we see $(\mb{E}^{\pr})^2 = 0$. The last equality follows from this. 
	\end{proof}
	
	\begin{lemma}[Bianchi Identity]\label{Bianchi}
		$[\mE,\mc{R}]=0$
	\end{lemma}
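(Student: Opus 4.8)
The plan is to derive the Bianchi identity as a purely formal consequence of the definition $\mc{R} = \mE^2$ and the associativity of operator composition, mirroring the classical argument for connections. First I would record the one preliminary point that actually requires a word of justification: that the graded commutator $[\mE, \mc{R}]$ is well defined and lands back in $\mc{A}^{\bullet}(X, \End^{\bullet}E)$. This is because $\mc{R} = \mE^2$ is $\mc{A}^{\bullet}(X)$-linear — which follows from the $d^X$-Leibniz rule for the $d^X$-superconnection $\mE$ together with $(d^X)^2 = 0$, exactly as used in Lemma \ref{curvature is degree 0} — so $\mc{R}$ acts on $\mc{A}^{\bullet}(X, E^{\bullet})$ by evaluation and its supercommutator with the superconnection $\mE$ is again an $\mc{A}^{\bullet}(X)$-linear operator.

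With that in place, the computation is one line. Since $\mc{R}$ has total degree $2$ it is even, so the supercommutator carries no sign:
\begin{equation*}
	[\mE, \mc{R}] = \mE \circ \mc{R} - (-1)^{\norm{\mE}\,\norm{\mc{R}}}\,\mc{R} \circ \mE = \mE \circ \mE^2 - \mE^2 \circ \mE = \mE^3 - \mE^3 = 0,
\end{equation*}
the middle equality being nothing but associativity of composition of $\mb{C}$-linear maps on $\mc{A}^{\bullet}(X, E^{\bullet})$.

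If one prefers to see it through the decomposition, I would instead start from $\mc{R} = [\mE^{\pr}, \mE^{\dpr}]$ (Lemma \ref{curvature is degree 0}), write $[\mE, \mc{R}] = [\mE^{\pr}, [\mE^{\pr}, \mE^{\dpr}]] + [\mE^{\dpr}, [\mE^{\pr}, \mE^{\dpr}]]$, apply the graded Jacobi identity to rewrite each iterated bracket — for an odd operator $a$ one has $[a,[a,b]] = [a^2, b]$ — and then invoke $\mE^{\pr} \circ \mE^{\pr} = 0$ (Lemma \ref{curvature is degree 0}) and $\mE^{\dpr} \circ \mE^{\dpr} = 0$ (flatness) to kill both terms. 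In either approach there is essentially no obstacle: everything in sight is even, so the sign bookkeeping is trivial, and the only substantive ingredient — $\mc{A}^{\bullet}(X)$-linearity of $\mc{R}$, hence of $[\mE,\mc{R}]$ — has already been established. The "hard part", such as it is, is merely being careful that the various compositions are taken as operators on $\mc{A}^{\bullet}(X, E^{\bullet})$ and not confusing the operator $\mE$ with an endomorphism-valued form.
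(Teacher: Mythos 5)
Your main computation is exactly the paper's proof: expand $[\mE,\mc{R}]=\mE\circ\mE^2-\mE^2\circ\mE=0$ by associativity, with the sign trivial since $\mc{R}$ is even. The preliminary remark on $\mc{A}^{\bullet}(X)$-linearity and the alternative route via $\mc{R}=[\mE^{\pr},\mE^{\dpr}]$ are fine but not needed; the argument is correct and essentially identical to the paper's.
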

	\begin{proof}
		By definition, $\mc{R}=\frac{1}{2}[\mE,\mE]$. Use either the graded Jacobi identity or simply expand the expression, we have
		\begin{align*}
		[\mE,\mc{R}] = [\mE, \mE^2]
		=\mE\cdot\mE^2-\mE^2\cdot\mE=0	
		\end{align*}
	\end{proof}
	
	\begin{definition}
		Assume $(E, \mE^{\dpr}, h)$ is a Hermitian cohesive module with Chern superconnection $\mb{E}_h$ and curvature $\mc{R}_h$. Let $f(T)$ be a convergent power series in $T$, we define the characteristic form of $(E, h)$ associated to $f(T)$ by
		\begin{equation}
		f(E^{\bullet}, \mb{E}^{\dpr}, h) = \tr f(\mc{R}_h)
		\end{equation}
		where $\tr$ is the supertrace in the sense of Quillen.
	\end{definition}
	
	\begin{remark}
		The power series is required to be convergent since $\mc{R}$ is no longer concentrated in degree $(1, 1)$. In practice, this can be relaxed. For example, if $\mb{E}^{\dpr}_0 = 0$ and therefore $\mc{R} \in \mc{A}^{\bullet > 0}$, we can define the characteristic forms by any power series since $\mc{R}$ is nilpotent. 
	\end{remark}

	\begin{proposition}\label{characteristic forms are closed}
		The characteristic forms are closed.  
		\begin{equation}
		d^Xf(E^{\bullet}, \mb{E}^{\dpr} , h) = 0
		\end{equation}
	\end{proposition}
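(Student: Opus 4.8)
The plan is to run the Chern--Weil argument adapted to superconnections, in the spirit of Quillen's formalism. First I would reduce to monomials: by $\mb{C}$-linearity of $\tr$ and of $f\mapsto f(\mc{R})$, and because on the compact manifold $X$ the series $\sum_n a_n\mc{R}^n$ and its term-by-term exterior derivative converge in $C^\infty$ --- here one uses that $\mc{R}$ is the sum of an $\End^0 E$-valued function of zero form degree and a term of strictly positive form degree which is nilpotent, so that $f(\mc{R})$ is an honest smooth $\End^\bullet E$-valued form --- it is enough to show $d^X\tr(\mc{R}^n)=0$ for every integer $n\ge 0$ and then pass to the limit of partial sums.

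The key technical ingredient is the transgression identity
\begin{equation}
d^X\tr(\beta)=\tr([\mE,\beta]),\qquad \beta\in\mc{A}^{\bullet}(X,\End^{\bullet}E),
\end{equation}
where $[\mE,\beta]=\mE\circ\beta-(-1)^{\norm{\beta}}\beta\circ\mE$ is the supercommutator of operators on $\mc{A}^{\bullet}(X,E^{\bullet})$. I would first observe that, although $\mE$ is only a superconnection and not $\mc{A}^{\bullet}(X)$-linear, the $\delbar^X$- and $\del^X$-Leibniz rules force the non-linear part of $\mE$ to cancel in the commutator, so $[\mE,\beta]$ is again left multiplication by an element of $\mc{A}^{\bullet}(X,\End^{\bullet}E)$ and the right-hand side is well defined. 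To prove the identity I would argue locally: over a chart trivializing $E^{\bullet}$ compatibly with its $\mb{Z}$-grading, write $\mE=d^X+A$ with $A\in\mc{A}^{\bullet}(X,\End^{\bullet}E)$ locally defined; then $[\mE,\beta]=d^X\beta+[A,\beta]$, and applying $\tr$ gives $\tr([\mE,\beta])=d^X\tr(\beta)+\tr([A,\beta])=d^X\tr(\beta)$, since in the trivialization $\tr$ is applied entrywise and hence commutes with $d^X$, while the supertrace of the supercommutator of two $\End^{\bullet}E$-valued forms vanishes with respect to the total grading.

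It then remains to combine the two. Since $\mc{R}=\mE^2$ has even total degree, the graded Leibniz rule for the odd derivation $[\mE,\,\cdot\,]$ yields $[\mE,\mc{R}^n]=\sum_{j=0}^{n-1}\mc{R}^{j}[\mE,\mc{R}]\mc{R}^{n-1-j}$, and every summand vanishes by the Bianchi identity $[\mE,\mc{R}]=0$ of Lemma~\ref{Bianchi} (equivalently, $[\mE,\mE^{2n}]=\mE^{2n+1}-\mE^{2n+1}=0$). Therefore $d^X\tr(\mc{R}^n)=\tr([\mE,\mc{R}^n])=0$, and summing over $n$ gives $d^Xf(E^{\bullet},\mE^{\dpr},h)=d^X\tr f(\mc{R})=0$.

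This proposition is essentially routine; the point needing care will be the sign bookkeeping for the Quillen supertrace, which interlaces the form degree with the endomorphism $\mb{Z}_2$-degree, so that one must check in the present conventions that $\tr$ kills supercommutators and intertwines $d^X$ with exterior differentiation, and that $[\mE,\beta]$ really lies in $\mc{A}^{\bullet}(X,\End^{\bullet}E)$. I would also note, looking ahead, that running the same computation with $\mE^{\dpr}$ in place of $\mE$ shows that $\delbar^X\tr f(\mc{R})$ is $\del^X$-exact rather than zero; that refinement is what places the characteristic forms in Bott--Chern cohomology and is pursued separately.
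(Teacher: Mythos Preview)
Your proof is correct and follows essentially the same route as the paper: reduce to monomials, then use the Bianchi identity $[\mE,\mc{R}]=0$ to kill each summand in $d^X\tr(\mc{R}^n)=\sum_i\tr(\mc{R}^{i-1}[\mE,\mc{R}]\mc{R}^{n-i})$. The paper's proof is a single line that presupposes the identity $d^X\tr(\beta)=\tr([\mE,\beta])$ without comment, whereas you spell this out and justify it locally; that extra care is appropriate, and otherwise the two arguments coincide.
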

	\begin{proof}
		Without loss of generality, we assume $f(T)=T^n$ is a monomial. Then we have
		\begin{equation}
			d^X \tr(\mc{R}^n_h) = \sum_i\tr (\mc{R}_h^{i-1}[\mE_h, \mc{R}_h]\mc{R}^{n-i}_h) 
		\end{equation} 
		Since each term in the summation is zero by Bianchi identity, $\tr(\mc{R}^n_h)$ is closed.
	\end{proof}
	
	Consequently, the characteristic forms defines deRham cohomology classes. We will show that these cohomology classes only dependent on the connection component $\nabla^E = \mb{E}_1$ in $\mb{E}_h$. Let $\mE_t=(1-t){\nabla^E}+t\mE$ be a one parameter family of superconnections that joins $\nabla^E$ to $\mE$. If we write $A=\mE-\mE_1=\sum_{k\neq 1} \mE_k$ for the sum of linear terms, then $\mE_t = \nabla^E + tA$ and $\mc{R}_t$ are the corresponding curvatures for $\mE_t$. 
	
	\begin{lemma} The deformation of curvature is computed by:
		\begin{equation}
			\frac{d}{dt}\mc{R}_t=[\mE_t,A]
		\end{equation}
	\end{lemma}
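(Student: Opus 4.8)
The plan is to differentiate the family of tensorial operators $\mc{R}_t=\mE_t\circ\mE_t$ directly, exploiting that $t\mapsto\mE_t$ is affine. First I would record that $\mE_t=(1-t)\nabla^E+t\mE$ is genuinely a $d^X$-superconnection for every $t$: the failure of $\mc{A}^0(X)$-linearity of a superconnection is measured by the Leibniz term $(-1)^{\norm{e}}e\otimes d^X\omega$, which is independent of the superconnection, so any affine combination $\sum_i\lambda_i\mE^{(i)}$ with $\sum_i\lambda_i=1$ of superconnections is again a superconnection. Hence $\mc{R}_t:=\mE_t\circ\mE_t$ is, for each $t$, an $\mc{A}^{\bullet}(X)$-linear operator of total degree $0$ (the standard fact that the square of a superconnection is tensorial), and $t\mapsto\mc{R}_t$ is a polynomial of degree two in $t$, hence smooth; so the left-hand side of the asserted identity makes sense.

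Then the computation: writing $A=\mE-\mE_1=\sum_{k\neq 1}\mE_k$, which is $\mc{A}^{\bullet}(X)$-linear of total degree $1$, I have $\mE_{t+u}=\mE_t+uA$, so
\begin{align*}
\mc{R}_{t+u}=(\mE_t+uA)\circ(\mE_t+uA)=\mE_t^2+u\,(\mE_t\circ A+A\circ\mE_t)+u^2\,A\circ A.
\end{align*}
Since $\mE_t$ and $A$ are both odd, the coefficient of $u$ is exactly the graded commutator $[\mE_t,A]=\mE_t\circ A+A\circ\mE_t$, and differentiating at $u=0$ yields $\frac{d}{dt}\mc{R}_t=[\mE_t,A]$. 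If one prefers to avoid the expansion, one can instead evaluate on a homogeneous section $s\in\mc{A}^{\bullet}(X,E^{\bullet})$, substitute $\mE_t(s)=\nabla^E(s)+tA(s)$, compose once more, collect powers of $t$, and differentiate; the cross terms reassemble into $(\mE_t A+A\mE_t)(s)=[\mE_t,A]s$.

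There is essentially no obstacle here; the only points requiring care are bookkeeping rather than substance. One must note that affine paths of superconnections are again superconnections, so $\mc{R}_t$ is a legitimate curvature; and one must keep the signs in the graded Leibniz rule straight — because $t$ is an even parameter and $\dot{\mE}_t=A$ is odd, differentiating $\mE_t\circ\mE_t$ produces the symmetric sum $A\circ\mE_t+\mE_t\circ A$ with no extra sign, and this coincides with the graded commutator precisely because both factors are odd. The same argument gives $\frac{d}{dt}\mc{R}_t=[\mE_t,\dot{\mE}_t]$ for any smooth family of superconnections with odd derivative, which is the form that will be invoked in the transgression arguments below.
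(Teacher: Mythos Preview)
Your proof is correct and follows essentially the same approach as the paper: both expand the square $\mE_t\circ\mE_t$ as a quadratic polynomial and differentiate. The only cosmetic difference is that the paper expands around $\nabla^E$ (getting $\mc{R}_t=\mc{R}_0+t[\nabla^E,A]+t^2A^2$, then differentiating and regrouping $[\nabla^E,A]+2tA^2=[\mE_t,A]$), whereas you expand around $\mE_t$ directly via $\mE_{t+u}=\mE_t+uA$, which yields the commutator immediately without the regrouping step.
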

	\begin{proof}
		Since $\mE_t = \nabla^E + tA$, we have $\mc{R}_t = \mc{R}_0 + t[\nabla^E, A] + t^2 A^2$. We can then compute
		\begin{equation}
		\frac{d}{dt}\mc{R}_t = [\nabla^E, A] + 2tA^2 
		\end{equation}
		since $2tA^2 = t[A, A] = [tA, A]$, the right side of the above equality is just $[\mb{E}_t, A]$.
	\end{proof}
	
	\begin{corollary}\label{deRham class of characteristic forms}
		If we define $f(E, \mE_t)$ by the same formula $\tr f(\mc{R}_t)$, we have
		\begin{equation}{\label{linear transgression}}
		f(E, \mb{E}) - f(E, \nabla^E)
		=d^X \int_{0}^{1} \tr \{ A\cdot f^{\pr}(\mc{R}_t) \} dt 
		\end{equation}
		Consequently, $f(E, \mb{E})$ is cohomologous to  $f(E, \nabla^E)$ in deRham cohomology.
	\end{corollary}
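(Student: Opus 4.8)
The plan is to reduce to the case of a monomial $f(T)=T^n$ — linearity in $f$ together with the convergence hypothesis lets one differentiate the power series term by term and recombine at the end — and then to run the standard Chern--Weil transgression computation along the affine path $\mE_t=\nabla^E+tA$, where $A=\sum_{k\ne 1}\mE_k$ is the sum of the linear terms of $\mE$. The two structural inputs I would isolate first are: (i) the identity $d^X\tr(\omega)=\tr([\mE_t,\omega])$ for every $\omega\in\mc{A}^{\bullet}(X,\End^{\bullet}E)$, which follows from the graded Leibniz rule for the superconnection $\mE_t$ and the vanishing of $\tr$ on supercommutators — this is precisely the identity already invoked in the proof of Proposition \ref{characteristic forms are closed}; and (ii) the Bianchi identity $[\mE_t,\mc{R}_t]=0$, which holds for every $t$ by Lemma \ref{Bianchi} applied to $\mE_t$, since $\mc{R}_t=\mE_t^2$. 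I would also note at the outset that $A$ is homogeneous of total degree $1$ (each $\mE_k$ lies in $\mc{A}^{0,k}(X,\End^{1-k}E)$, of total degree $k+(1-k)=1$, and likewise for its $(1,0)$ parts), so $[\mE_t,-]$ obeys the graded Leibniz rule with sign $(-1)^{\norm{A}}$, while $\mc{R}_t$ has total degree $2$, hence is even and can be cycled through $\tr$ without sign.

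With these in hand, I would differentiate under the trace and substitute $\tfrac{d}{dt}\mc{R}_t=[\mE_t,A]$ from the preceding lemma, obtaining
\[
\frac{d}{dt}\tr(\mc{R}_t^{\,n})=\sum_{i=1}^{n}\tr\bigl(\mc{R}_t^{\,i-1}[\mE_t,A]\,\mc{R}_t^{\,n-i}\bigr)=n\,\tr\bigl([\mE_t,A]\,\mc{R}_t^{\,n-1}\bigr),
\]
where the second equality uses the sign-free cyclicity of $\tr$ in the even factors $\mc{R}_t$. The graded Leibniz rule together with Bianchi gives $[\mE_t,A\mc{R}_t^{\,n-1}]=[\mE_t,A]\mc{R}_t^{\,n-1}+(-1)^{\norm{A}}A[\mE_t,\mc{R}_t^{\,n-1}]=[\mE_t,A]\mc{R}_t^{\,n-1}$, so applying (i) yields $\tfrac{d}{dt}\tr(\mc{R}_t^{\,n})=d^X\tr(nA\,\mc{R}_t^{\,n-1})=d^X\tr\{A\cdot f^{\pr}(\mc{R}_t)\}$ for $f(T)=T^n$. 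Integrating over $t\in[0,1]$, pulling $d^X$ outside the integral, and evaluating the endpoints $\mc{R}_1$ (the curvature of $\mE$) and $\mc{R}_0$ (the curvature of $\nabla^E$) gives \eqref{linear transgression} for monomials; summing over the terms of a general convergent $f$ finishes the identity. The final assertion is then immediate, since $d^X$-exactness of $f(E,\mb{E})-f(E,\nabla^E)$ means the two forms represent the same de Rham class.

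I do not expect a serious obstacle: the content is exactly the classical transgression argument transported to the superconnection setting, and every structural fact it needs has already been established in the excerpt. The only points requiring care are bookkeeping — verifying that no signs intervene in the cyclicity step (which rests on $\mc{R}_t$ being even), confirming the parity of $A$ so that the graded Leibniz rule for $[\mE_t,-]$ applies cleanly, and justifying the interchange of $d^X$ with $\int_0^1$ as well as the term-by-term differentiation of the power series — all routine given compactness of $X$ and the smooth dependence of $\mc{R}_t$ on $t$.
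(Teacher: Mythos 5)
Your argument is correct and is exactly the standard transgression computation the paper intends: differentiate $\tr(\mc{R}_t^n)$ in $t$, substitute $\frac{d}{dt}\mc{R}_t=[\mE_t,A]$ from the preceding lemma, use cyclicity of $\tr$ on the even factors $\mc{R}_t$ together with the Bianchi identity and $d^X\tr(\cdot)=\tr([\mE_t,\cdot])$ to write the result as $d^X\tr\{A\cdot f^{\pr}(\mc{R}_t)\}$, and integrate over $[0,1]$. This matches the paper's (implicit) proof in both structure and detail, so there is nothing further to add.
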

	It is well-known that $f(E, \nabla^E)$ only depends on the topological vector bundle structure of $E$ as a $\mb{Z}_2$-graded vector bundle, so the characteristic class in deRham cohomology defined by $f(E^{\bullet}, h, \mb{E}^{\dpr})$ is independent of $h$. To capture the intrinsic holomorphic structures of cohesive modules, we introduce Bott-Chern cohomology which refines deRham cohomology.
	
	\begin{definition}
		Let $\mc{A}^{p,p}(X)$ be the space of forms of bi-degree $(p, p)$ and $Z^p(X)$ be the subspace of $d^X$-closed forms. We set $B^p(X)$ the subspace of $Z^p(X)$ that is the image of $\mc{A}^{p-1, p-1}(X)$ under $\del^X\delbar^X$, then the $p$-th Bott-Chern cohomology $H_{BC}^p(X)$ is defined by: 
		\begin{displaymath}
		H_{BC}^p(X) = \frac{Z^p(X)}{B^p(X)}
		\end{displaymath}
	\end{definition}
	
	By Lemma \ref{curvature is degree 0}, $f(E, \mE^{\dpr}, h) \in \mc{G}^0$ and hence the forms defines Bott-Chern cohomology classes. We will prove in the next section that these refined cohomology classes are independent of the Hermitian structure $h$ as well. In the last section, we will show they are further invariant under homotopy equivalences between cohesive modules. 
	
	\section{Transgression formulas for characteristic forms}
	\subsection{Universal cohesive module and Maurer-Cartan equation}
	We fix a cohesive module $E=(E^{\bullet},\mb{E}^{\dpr})$ on $X$ and let $\mc{M}$ be the space of Hermitian structures on $E^{\bullet}$. We endow it with the topology of uniform $\mc{C}^{\infty}$-convergence on compact subsets of X. If $h\in \mc{M},x \in X$, let $A_{x}^{h}$ be the subset of $\End^0{E_x}$ whose elements are Hermitian endomorphisms with respect to $h_x$. $A^h_x$ forms a bundle $A^h$ over $X$, and the tangent space of $\mc{M}$ at $h$, $T_h\mc{M}$, can be identified with the linear space of sections of $A^{h}$ over $X$. 
	
	\begin{definition}
		Consider the projection $\pi: \mc{M} \times X \rightarrow X$, we define $\widetilde{E}^{\bullet}$ as the pull-back bundle $\pi^*E^{\bullet}$ over $\mc{M} \times X$. There is an universal cohesive structure $\widetilde{\mb{E}}^{\dpr}$ on $\widetilde{E}^{\bullet}$ defined by
		\begin{equation}
		\widetilde{\mb{E}}^{\dpr} = \delbar^{\mc{M}} + \mb{E}^{\dpr}
		\end{equation}
		In addition, we write $\tilde{h}$ for the universal Hermitian form on $\widetilde{E}^{\bullet}$. We denote by $\widetilde{E}$ the universal Hermitian cohesive module $(\widetilde{E}^{\bullet}, \widetilde{\mb{E}}^{\dpr}, \tilde{h})$ over $\mM \times X$.
	\end{definition}
	
	\begin{remark}
		The bundle $\widetilde{E}^{\bullet}$ is flat in the $\mc{M}$-direction with flat connection $d^{\mc{M}}$. In the above formula, $\delbar^{\mc{M}}$ acts on an element $e\otimes \omega$ by $(-1)^{\norm{e}}e\otimes \delbar^{\mc{M}}\omega$.
	\end{remark}
	
	\begin{lemma}
		$\widetilde{\mb{E}}^{\dpr}$ defines a $\mb{Z}$-graded flat $\delbar^{\mc{M}\times X}$-connection on $\widetilde{E}^{\bullet}$.
	\end{lemma}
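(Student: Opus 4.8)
The plan is to verify the three defining properties of a $\mb{Z}$-graded flat $\delbar^{\mc{M}\times X}$-superconnection directly from the formula $\widetilde{\mb{E}}^{\dpr} = \delbar^{\mc{M}} + \mb{E}^{\dpr}$: that it preserves the $\mb{Z}$-grading inherited from $E^{\bullet}$ while raising the total degree by one, that it obeys the $\delbar^{\mc{M}\times X}$-Leibniz rule \eqref{Leibiniz fromula of delbar superconnection}, and that it squares to zero. The degree statement is immediate, since $\delbar^{\mc{M}}$ and $\mb{E}^{\dpr}$ each raise the total form degree by one and neither moves a section between the summands $E^k$.

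For the Leibniz rule I would first record the product decomposition $\mc{A}^{0,\bullet}(\mc{M}\times X) = \mc{A}^{\bullet}(\mc{M})\,\widehat{\otimes}\,\mc{A}^{0,\bullet}(X)$ (completed tensor product, with the Koszul sign in the multiplication), under which $\delbar^{\mc{M}\times X} = \delbar^{\mc{M}}\otimes 1 + 1\otimes\delbar^X$; here $\delbar^{\mc{M}}$ is the de Rham differential of $\mc{M}$, playing the role of the $\mc{M}$-directional $\delbar$ because $\mc{M}$ carries no complex directions. It then suffices to test the identity on an element $e\otimes\alpha\wedge\beta$ with $\alpha$ pulled back from $\mc{M}$ and $\beta$ from $X$. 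The $\delbar^{\mc{M}}$-summand of $\widetilde{\mb{E}}^{\dpr}$ produces the $\mc{M}$-derivative term via the rule $\delbar^{\mc{M}}(e\otimes\omega) = (-1)^{\norm e}e\otimes\delbar^{\mc{M}}\omega$ from the preceding remark, while the $\mb{E}^{\dpr}$-summand produces the $X$-derivative term because $\mb{E}^{\dpr}$ already satisfies the $\delbar^X$-Leibniz rule by hypothesis; collecting the signs yields \eqref{Leibiniz fromula of delbar superconnection} for $\delbar^{\mc{M}\times X}$.

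For flatness, expand the graded commutator of odd operators
\begin{equation*}
(\widetilde{\mb{E}}^{\dpr})^2 = (\delbar^{\mc{M}})^2 + [\delbar^{\mc{M}}, \mb{E}^{\dpr}] + (\mb{E}^{\dpr})^2 .
\end{equation*}
The first term vanishes because $\delbar^{\mc{M}}$ is a de Rham differential, hence squares to zero; the last term vanishes by the flatness of the cohesive module $E$. The middle term is the essential point: $\mb{E}^{\dpr}$ is assembled from $\pi^{*}$ of data on $X$, so its coefficients are constant in the $\mc{M}$-direction, and hence it anticommutes with $\delbar^{\mc{M}}$. I would make this precise by evaluating $[\delbar^{\mc{M}},\mb{E}^{\dpr}]$ on $e\otimes\alpha\wedge\beta$ as above and cancelling the two resulting terms using only the Leibniz rules together with $\delbar^{\mc{M}}\beta = 0$ and the fact that $\mb{E}^{\dpr}$ acts trivially on forms pulled back from $\mc{M}$.

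The main obstacle, such as it is, is purely bookkeeping: pinning down the meaning of $\delbar^{\mc{M}}$ and of $\mc{A}^{0,\bullet}(\mc{M}\times X)$ when $\mc{M}$ is the infinite-dimensional, non-complex space of Hermitian metrics, and then chasing the Koszul signs through the product decomposition so that the anticommutation $[\delbar^{\mc{M}},\mb{E}^{\dpr}]=0$ and the Leibniz identity emerge with the sign conventions fixed in Section 2. No genuinely new idea is needed beyond the observation that $\mb{E}^{\dpr}$ is pulled back from $X$ and is therefore locally constant along $\mc{M}$.
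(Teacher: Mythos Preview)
Your proposal is correct and follows essentially the same approach as the paper: the paper's proof only writes out the flatness computation $(\widetilde{\mb{E}}^{\dpr})^2 = (\delbar^{\mc{M}})^2 + \delbar^{\mc{M}}(\mb{E}^{\dpr}) + (\mb{E}^{\dpr})^2 = 0$, invoking exactly your key observation that $\mb{E}^{\dpr}$ is constant along the $\mc{M}$-direction, and leaves the grading and Leibniz properties implicit. Your version is simply more thorough in spelling out those two checks and in tracking the Koszul signs, but there is no substantive difference in strategy.
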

	
	\begin{proof}
		Note that the vertical cohesive structure $\mb{E}^{\dpr}$ is constant along $\mc{M}$-direction, so $d^{\mc{M}}\mb{E}^{\dpr} = 0$, we have
		\begin{equation}
		(\widetilde{\mb{E}}^{\dpr})^2 = (\delbar^{\mc{M}} + \mb{E}^{\dpr})^2 = (\delbar^{\mc{M}})^2 + \delbar^{\mc{M}}(\mb{E}^{\dpr}) + (\mb{E}^{\dpr})^2 = 0
		\end{equation}   
	\end{proof}
	
	\begin{definition}{\label{MC form}}
		The Maurer-Cartan one form $\theta \in \mc{A}^{1}(\mc{M}\times X, \End^{0}\widetilde{E})$ on $\mc{M}$ with values in $A$ is the one form defined by 
		\begin{equation}\label{theta}
		\theta=h^{-1}d^{\mc{M}}{h}
		\end{equation}
		where $d^M$ is the exterior differential on $\mc{M}$. 
	\end{definition}
	
	\begin{lemma}
		$\theta$ is a one form in the $\mc{M}$-direction with values in Hermitian endomorphisms, $\theta \in \mc{A}^1(\mc{M}, \End(E, h))$. 
	\end{lemma}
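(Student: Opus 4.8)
The plan is to verify the two claims in the lemma directly from the definition $\theta = h^{-1} d^{\mc{M}} h$. First, that $\theta$ is a one-form purely in the $\mc{M}$-direction is immediate: $h$ is a function on $\mc{M}$ (a choice of metric at each point of $X$, varying with the metric parameter), and $d^{\mc{M}}$ differentiates only in the $\mc{M}$-directions, so $d^{\mc{M}} h \in \mc{A}^1(\mc{M}, \End^0 E)$ has no $X$-component; multiplying on the left by $h^{-1} \in \mc{A}^0(\mc{M}, \End^0 E)$ preserves this. Hence $\theta \in \mc{A}^1(\mc{M}, \End^0 E)$ with no $X$-leg, which is the first assertion.

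The substantive point is that the endomorphism-valued coefficients of $\theta$ are Hermitian with respect to $h$, i.e.\ $\theta$ takes values in $A^h$. The approach I would take is to differentiate the defining relation of a Hermitian metric. Fix a tangent vector $v \in T_h\mc{M}$ and a point of $X$; write $\dot h$ for the derivative of $h$ in the direction $v$. For sections $e, f$ of $E^{\bullet}$ the quantity $h(e, f)$ is sesquilinear and $\overline{h(e,f)} = h(f,e)$ (conjugate symmetry); differentiating in the $\mc{M}$-direction along $v$ gives $\overline{\dot h(e,f)} = \dot h(f, e)$, i.e.\ $\dot h$ is itself a Hermitian form (not necessarily positive). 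Now $\theta(v) = h^{-1}\dot h$ is the endomorphism representing the Hermitian form $\dot h$ with respect to $h$, and a standard computation shows that the endomorphism associated to a Hermitian form via a reference Hermitian metric is self-adjoint for that metric: $h(\theta(v) e, f) = \dot h(e,f) = \overline{\dot h(f,e)} = \overline{h(\theta(v) f, e)} = h(e, \theta(v) f)$, where the last step uses conjugate symmetry of $h$. Thus $\theta(v)^* = \theta(v)$ with respect to $h$ for every $v$, so $\theta \in \mc{A}^1(\mc{M}, \End(E,h))$ as claimed. (One should also note this respects the $\mb{Z}$-grading on $E^{\bullet}$: since each $h$ makes the $E^k$ mutually orthogonal, so does each $\dot h$, hence $\theta(v)$ preserves the grading and lies in $\End^0$.)

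I do not expect a genuine obstacle here — the lemma is essentially a bookkeeping statement unpacking Definition \ref{MC form}. The only mild subtlety is making sure the adjoint $*$ being used is the one induced by $h$ on honest degree-$0$ endomorphisms (where it coincides with the ordinary Hermitian adjoint, since the sign twist in the definition of $*$ on $\mc{A}^{\bullet}(X,\End^{\bullet}E)$ is trivial in bidegree $(0,0)$ and endomorphism-degree $0$), so that ``Hermitian endomorphism with respect to $h_x$'' in the definition of $A^h_x$ matches the condition $\theta(v)^* = \theta(v)$ derived above. With that identification in place the proof is just the two displayed computations.
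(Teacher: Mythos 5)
Your argument is correct and is essentially the paper's own: the paper also differentiates a family of Hermitian metrics (written in a local frame as Hermitian-matrix-valued functions $\mc{H}_t$) and observes that $\mc{H}_t^{-1}\dot{\mc{H}}_t$ is Hermitian with respect to $\mc{H}_t$. You simply carry out intrinsically, and in slightly more detail, the self-adjointness computation that the paper asserts in one line, and the extra remarks on the grading and on the degree-$0$ case of the $*$-operation are consistent with the paper's conventions.
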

	\begin{proof}
		We choose a frame $(e_1, e_2,...,e_n)$ for $E^{\bullet}$ locally on $X$. Any Hermitian form $h$ is represented by a smooth function with values in Hermitian matrix by :
		\begin{equation}
			\mc{H}: x\in X \rightarrow [h_x(e_i,e_j)]_{ij}
		\end{equation}
		If $\mc{H}_t(x)$ is a smooth family of Hermitian matrix valued functions with parameter $t$, we can differentiate $\mc{H}_t(X)$ with respect to $t$, then $\mc{H}_t^{-1}\dot{\mc{H}_t}$ is a Hermitian with respect to $\mc{H}_t$. 
	\end{proof}
	
	\begin{lemma}\label{MC-equation}
		$\theta$ satisfies the Maurer-Cartan equation:
		\begin{equation} 
		d^{\mM}\theta=-\theta^2
		\end{equation}
	\end{lemma}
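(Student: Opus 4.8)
The plan is to reduce the identity to a formal computation with matrix-valued forms on $\mM$, in the same spirit as the proof of the previous lemma. Since $\theta$ is built only from the $\mM$-directional differential and the equation $d^{\mM}\theta=-\theta^2$ is an identity of two-forms on $\mM\times X$ that holds pointwise in $X$, it suffices to verify it after restricting to a coordinate chart of $X$ over which $E^{\bullet}$ is trivialized. Fix such a chart together with a local frame $(e_1,\dots,e_n)$ of $E^{\bullet}$; then, exactly as before, $h$ is represented by a smooth map $\mc{H}$ on $\mM\times X$ with values in positive-definite Hermitian matrices, and $\theta=\mc{H}^{-1}\,d^{\mM}\mc{H}$.

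Next I would apply $d^{\mM}$ and use the graded Leibniz rule for matrix-valued forms, observing that $\mc{H}^{-1}$ has form-degree $0$ so no sign intervenes:
\[
d^{\mM}\theta = d^{\mM}(\mc{H}^{-1})\wedge d^{\mM}\mc{H} + \mc{H}^{-1}\,(d^{\mM})^2\mc{H}.
\]
The second term vanishes because $(d^{\mM})^2=0$. For the first term, differentiating $\mc{H}^{-1}\mc{H}=I$ gives $d^{\mM}(\mc{H}^{-1})=-\mc{H}^{-1}(d^{\mM}\mc{H})\mc{H}^{-1}$, whence
\[
d^{\mM}\theta = -\mc{H}^{-1}(d^{\mM}\mc{H})\,\mc{H}^{-1}\wedge d^{\mM}\mc{H}
= -\bigl(\mc{H}^{-1}d^{\mM}\mc{H}\bigr)\wedge\bigl(\mc{H}^{-1}d^{\mM}\mc{H}\bigr)
= -\theta\wedge\theta = -\theta^2,
\]
where $\theta^2$ denotes the product in $\mc{A}^{\bullet}(\mM\times X,\End^{0}\widetilde{E})$, i.e. matrix multiplication combined with the wedge of the scalar parts, equivalently $\tfrac12[\theta,\theta]$. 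Because both sides are independent of the chosen trivialization, the identity that has been verified locally holds globally on $\mM\times X$.

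There is no genuine obstacle here: the lemma is a formal consequence of the flatness $(d^{\mM})^2=0$ together with the product rule for the inverse of an invertible matrix. The only points requiring care are the bookkeeping of signs in the graded Leibniz rule — trivial in this case since the ambiguous factor $\mc{H}^{-1}$ is a $0$-form — and keeping straight that $\theta^2$ is the non-abelian wedge square of a matrix-valued one-form, which is generically nonzero, rather than an ordinary square of functions.
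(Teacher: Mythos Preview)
Your proof is correct and follows essentially the same route as the paper: apply $d^{\mM}$ via the Leibniz rule, kill the $(d^{\mM})^2$ term, and use $d^{\mM}(h^{-1})=-h^{-1}(d^{\mM}h)h^{-1}$ to recognize $-\theta^2$. The only cosmetic difference is that you pass to a local frame and write $\mc{H}$, whereas the paper computes directly with $h$ itself; the calculation is identical.
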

	
	\begin{proof} 
		This is a straightforward calculation:
		\begin{align}
		d^{\mc{M}}\theta
		=&d^{\mM}(h^{-1}d^{\mM}h)=d^\mM(h^{-1})d^\mM h+h^{-1}d^{\mM}(d^{\mM}h)\\
		=&-h^{-1}(d^{\mM}h)h^{-1}d^{\mM}h=-\theta^2
		\end{align}
	\end{proof}
	
	Since $\widetilde{E}$ is flat along the $\mc{M}$-direction, by the explicit expression of $\theta$ with respect to a local frame on $X$, we see $\theta$ measures the deformation of $h$ over $\mc{M}$ in the following sense.
	\begin{lemma} 
		The flat $\mM$-directional exterior derivative $d^{\mM}$ has the following compatibility relation with the universal Hermitian form $\tilde{h}$.
		\begin{equation}\label{unitary d and MC form}
		(-1)^{\norm{s}}d^{\mc{M}}\tilde{h}(s,t)
		= -\tilde{h}(d^{\mc{M}}s,t) 
		+ \tilde{h}(s,\theta t) 
		+ \tilde{h}(s,d^{\mc{M}}t)
		\end{equation}
		for all $s, t \in \mc{A}^{0}(\mc{M}\times X, \widetilde{E})$.
	\end{lemma}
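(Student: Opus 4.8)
The plan is to reduce \eqref{unitary d and MC form} to a computation in a local frame of $E^{\bullet}$ on $X$. The identity is local in $X$, and both sides are additive in $s$ and in $t$; moreover, replacing $s$ by $e\otimes\omega$ or $t$ by $f\otimes\eta$ with $\omega,\eta\in\mc{A}^{0}(\mM\times X)$, the Leibniz rule for $d^{\mM}$ produces extra terms containing $d^{\mM}\omega$ and $d^{\mM}\eta$, and one checks --- using the convention $d^{\mM}(e\otimes\omega)=(-1)^{\norm{e}}e\otimes d^{\mM}\omega$, the fact that $d^{\mM}$ commutes with complex conjugation, and the values of $*$ on functions and one-forms recorded in Lemma \ref{properties of star on forms} --- that these extra terms agree on the two sides. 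Hence it suffices to verify \eqref{unitary d and MC form} for $s=e_{i}$ and $t=e_{j}$, where $(e_{1},\dots,e_{n})$ is a smooth frame of $E^{\bullet}$ over a coordinate patch $U\subseteq X$, pulled back along $\pi$ to $\mM\times U$ and hence constant in the $\mM$-direction.

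For such $s$ and $t$ one has $d^{\mM}e_{i}=d^{\mM}e_{j}=0$, so the first and third terms on the right of \eqref{unitary d and MC form} vanish and the identity collapses to $(-1)^{\norm{e_{i}}}d^{\mM}\tilde{h}(e_{i},e_{j})=\tilde{h}(e_{i},\theta e_{j})$. Since the universal metric is block-diagonal for the bundle grading and $\theta\in\mc{A}^{1}(\mM\times X,\End^{0}\widetilde{E})$ preserves it, both sides vanish unless $e_{i}$ and $e_{j}$ sit in the same bundle degree, which we call $p$; assume so. Write $\mc{H}_{ij}=\tilde{h}(e_{i},e_{j})$ for the metric matrix on $\mM\times U$; by Definition \ref{MC form}, $\theta$ is represented in this frame by the matrix of one-forms $\mc{H}^{-1}d^{\mM}\mc{H}$. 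Unwinding the evaluation convention $(L\otimes\tau)(f\otimes\eta)=(-1)^{\norm{\tau}\norm{f}}(Lf)\otimes(\tau\wedge\eta)$ gives $\theta e_{j}=(-1)^{p}\sum_{k}e_{k}\otimes(\mc{H}^{-1}d^{\mM}\mc{H})_{kj}$, whence
\[
\tilde{h}(e_{i},\theta e_{j})=(-1)^{p}\big(\mc{H}\cdot\mc{H}^{-1}d^{\mM}\mc{H}\big)_{ij}=(-1)^{p}\,d^{\mM}\mc{H}_{ij}=(-1)^{\norm{e_{i}}}\,d^{\mM}\tilde{h}(e_{i},e_{j}),
\]
which is exactly the reduced identity; in the last equality we used $p=\norm{e_{i}}=\norm{e_{j}}$ to identify the two grading signs.

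The computation itself is routine; the only delicate point is the sign bookkeeping. Three separate factors of $(-1)^{p}$ enter --- from the Leibniz convention $d^{\mM}(e\otimes\omega)=(-1)^{\norm{e}}e\otimes d^{\mM}\omega$, from the evaluation sign $(-1)^{\norm{\tau}\norm{f}}$ when the one-form-valued endomorphism $\theta$ is applied to a frame vector, and from the prefactor $(-1)^{\norm{s}}$ on the left-hand side --- along with the minus sign that the $*$-operation attaches to the first argument of $d^{\mM}\tilde{h}$; all of these are forced to cancel precisely because the metric is block-diagonal, so that only terms with $\norm{e_{i}}=\norm{e_{j}}$ contribute. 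Conceptually, the lemma merely says that $\theta=\tilde{h}^{-1}d^{\mM}\tilde{h}$ is, by construction, the zeroth-order tensor measuring the failure of the flat connection $d^{\mM}$ to be unitary for $\tilde{h}$ --- the analogue, for the parameter directions, of the defining equation \eqref{unitary superconnection} of the Chern superconnection, with $d^{\mM}$ playing the role of $\mb{E}$ and $\theta$ accounting for the variation of the metric along $\mM$.
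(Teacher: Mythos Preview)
Your proof is correct and follows exactly the approach the paper indicates: the paper offers no proof environment for this lemma, only the sentence ``by the explicit expression of $\theta$ with respect to a local frame on $X$'' preceding the statement, and your argument is precisely the local-frame verification this sentence gestures at, with the sign bookkeeping carried out in full.
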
  

	The above lemma together with equation \eqref{unitary superconnection} for each $h\in \mc{M}$ shows that $\widetilde{\mE} = d^{\mc{M}} + \mb{E}_h$ differs from the Chern superconnection of $(\tE,\tilde{h},\tmE^{\dpr})$ by the Maurer-Cartan form $\theta$.
	
	\begin{proposition} The universal superconnection $\tmE$ satisfies the following equation
		\begin{equation}\label{universal unitary superconnection}
		(-1)^{\norm{s}}d^{\mc{M} \times X}\tilde{h}(s,t)
		= -\tilde{h}(\tmE s,t) + \tilde{h}(s,\tmE t) + \tilde{h}(s,\theta t)
		\end{equation}
	\end{proposition}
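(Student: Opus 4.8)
The plan is to obtain \eqref{universal unitary superconnection} by adding two identities already established, after splitting the exterior derivative on the product as $d^{\mM\times X}=d^{\mM}+d^{X}$. Writing $\tmE=d^{\mM}+\mEh$, so that on a zero-form section $s\in\mc{A}^{0}(\mM\times X,\tE)$ one has $\tmE s=d^{\mM}s+\mEh s$, the target equation will follow once the $X$-directional and $\mM$-directional parts are checked separately. By linearity it suffices to treat homogeneous $s,t$.

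For the $\mM$-directional part there is nothing to prove: it is exactly equation \eqref{unitary d and MC form}, which reads $(-1)^{\norm{s}}d^{\mM}\tilde h(s,t)=-\tilde h(d^{\mM}s,t)+\tilde h(s,\theta t)+\tilde h(s,d^{\mM}t)$ and already carries the Maurer--Cartan term $\tilde h(s,\theta t)$. For the $X$-directional part I would fix $h\in\mM$ and restrict $s,t$ to the slice $\{h\}\times X$: since $\tilde h$ restricts to $h$ there and $d^{X}$ differentiates only along $X$, the restriction of $d^{X}\tilde h(s,t)$ to $\{h\}\times X$ equals $d^{X}h\bigl(s|_{\{h\}\times X},t|_{\{h\}\times X}\bigr)$, and the restricted sections are genuine zero-forms valued in $E^{\bullet}$. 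Hence Proposition \ref{Chern superconnection existence}, applied to the Hermitian cohesive module $(E^{\bullet},\mb{E}^{\dpr},h)$ whose Chern superconnection is $\mEh$, gives $(-1)^{\norm{s}}d^{X}\tilde h(s,t)=-\tilde h(\mEh s,t)+\tilde h(s,\mEh t)$ on $\{h\}\times X$; since $h$ is arbitrary, this is an identity of forms on all of $\mM\times X$.

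Adding the two identities and regrouping $d^{\mM}+\mEh=\tmE$ in each slot produces \eqref{universal unitary superconnection}. The only step needing a word of justification is passing from the fibrewise identity \eqref{unitary superconnection} to an identity on the total space, i.e. that reassembling the slices introduces no $\mM$-directional correction; this is immediate since $\mEh$ is built pointwise in $h$ (Proposition \ref{Chern superconnection existence}) and $d^{X}$ involves no $\mM$-derivatives. The signs $(-1)^{\norm{s}}$ in \eqref{unitary superconnection} and \eqref{unitary d and MC form} agree, so the sum is consistent and no genuine computation remains; the proof is essentially a matter of bookkeeping the two partial differentials.
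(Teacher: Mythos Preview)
Your proposal is correct and follows exactly the approach the paper indicates: the paper states just before this proposition that ``the above lemma together with equation \eqref{unitary superconnection} for each $h\in\mc{M}$'' yields the result, which is precisely your strategy of splitting $d^{\mM\times X}=d^{\mM}+d^{X}$, invoking \eqref{unitary d and MC form} for the $\mM$-part and the fibrewise unitarity \eqref{unitary superconnection} for the $X$-part, and summing. Your bookkeeping of the regrouping $d^{\mM}+\mE_h=\tmE$ and the matching signs is accurate, and the paper offers no further argument beyond this.
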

	
	Even though the previous proposition shows that $\tmE$ is not the Chern superconnection, we will use $\tmE$ to study $\tE$.
	Denote by $\tR$ the curvature of $\tmE$, it is given by
	\begin{equation}\label{universal curvature}
	\tR = ({d^{\mM}}+\mE_h) \circ ({d^{\mM}}+\mE_h) = {d^{\mM}}\mE_h + \mc{R}_h
	\end{equation}
	
	\begin{proposition}\label{derivative of connection} The $\mM$-directional derivative of the Chern superconnection $\mE_h$ is given by
		\begin{equation}\label{derivative of connection.1}
		{d^{\mM}}\mE_h = -[\mE^{\pr}_h,\theta]
		\end{equation}  
		and the $\mM$-directional derivative of curvature $\tR$ is given by
		\begin{equation}
		d^{\mc{M}}\tR = d^{\mM}\mc{R}_h 
		\end{equation}	
	\end{proposition}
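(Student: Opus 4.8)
\emph{Overview.} The plan is to prove \eqref{derivative of connection.1} first; the formula for $d^{\mM}\tR$ then follows at once from it together with \eqref{universal curvature}.

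\emph{The identity $d^{\mM}\mE_h=-[\mE^{\pr}_h,\theta]$.} I would begin from \eqref{universal curvature}. Writing $\widetilde{\mE}=d^{\mM}+\mE_h$ and splitting off the $\del^X$-part, $\widetilde{\mE}=\mE^{\pr}_h+(d^{\mM}+\mE^{\dpr})=\mE^{\pr}_h+\tmE^{\dpr}$; squaring and using $(\mE^{\pr}_h)^2=0$, $\mc{R}_h=[\mE^{\pr}_h,\mE^{\dpr}]$ from Lemma \ref{curvature is degree 0} together with $(\tmE^{\dpr})^2=0$, one gets $\tR=[\mE^{\pr}_h,d^{\mM}]+\mc{R}_h$, and comparison with \eqref{universal curvature} yields $d^{\mM}\mE_h=[\mE^{\pr}_h,d^{\mM}]$. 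So it suffices to prove $[\mE^{\pr}_h,d^{\mM}+\theta]=0$. The point is that $\mE^{\pr}_h$ depends on $h$ only by conjugation: in a local frame on $X$, the explicit formula \eqref{unitary connection.5} for the connection component and the prescription $\mE^{\pr}_k=(\mE^{\dpr}_k)^{*}$ for the linear components give
\begin{equation*}
\mE^{\pr}_h=\mc{H}^{-1}\circ\mf{D}\circ\mc{H},
\end{equation*}
where $\mc{H}=h(s,s)$ is the Gram matrix of $h$ and $\mf{D}=\del^X+\Theta^{*}+\sum_{k\neq1}(\mE^{\dpr}_k)^{\dagger}$ is the $\del^X$-superconnection built from the (matrix) conjugate transposes of the fixed connection form $\Theta$ of $\mE^{\dpr}_1$ and of the linear forms $\mE^{\dpr}_k$; in particular $\mf{D}$ is independent of $h$. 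Since likewise $d^{\mM}+\theta=\mc{H}^{-1}\circ d^{\mM}\circ\mc{H}$ by the definition $\theta=h^{-1}d^{\mM}h$, conjugation gives $[\mE^{\pr}_h,d^{\mM}+\theta]=\mc{H}^{-1}[d^{\mM},\mf{D}]\mc{H}=0$ because $\mf{D}$ is constant along $\mM$. Hence $d^{\mM}\mE_h=[\mE^{\pr}_h,d^{\mM}]=-[\mE^{\pr}_h,\theta]$. As an alternative, one can differentiate the defining equation \eqref{unitary superconnection} along a path $h_u$ of metrics: with $\frac{d}{du}h(s,t)=h(\theta(\dot h)s,t)$, substituting \eqref{unitary superconnection} applied to $\theta(\dot h)s$ and cancelling gives $[\mE_h,\theta(\dot h)]=\dot{\mE}-\dot{\mE}^{*}$, after which comparing $\del^X$-type with $\delbar^X$-type parts — handling the bidegree $(0,0)$, where both $\mE^{\dpr}_0$ and $(\mE^{\dpr}_0)^{*}$ sit, by hand using that $\mE^{\dpr}_0$ is $h$-independent — isolates the same identity.

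\emph{The identity $d^{\mM}\tR=d^{\mM}\mc{R}_h$.} By \eqref{universal curvature}, $\tR=d^{\mM}\mE_h+\mc{R}_h$, where $d^{\mM}\mE_h=[d^{\mM},\mE_h]$ has $\mM$-degree one and $\mc{R}_h$ has $\mM$-degree zero. Apply $d^{\mM}$; since $d^{\mM}$ is the flat connection on $\tE$ we have $(d^{\mM})^2=0$, so $d^{\mM}(d^{\mM}\mE_h)=[d^{\mM},[d^{\mM},\mE_h]]=\frac{1}{2}[[d^{\mM},d^{\mM}],\mE_h]=0$ by the graded Jacobi identity, and therefore $d^{\mM}\tR=d^{\mM}\mc{R}_h$.

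\emph{Main obstacle.} The real work is the first step: establishing the conjugation identity $\mE^{\pr}_h=\mc{H}^{-1}\circ\mf{D}\circ\mc{H}$ and, throughout, keeping the sign conventions consistent — the graded (super)commutator, the $*$-involution on forms versus the metric adjoint on endomorphisms, and the Koszul signs produced when the purely $X$-directional operator $\mE^{\pr}_h$ is commuted past the $\mM$-directional one-form $\theta$. It is precisely these signs that produce the minus sign in $d^{\mM}\mE_h=-[\mE^{\pr}_h,\theta]$; everything after that is formal manipulation with the identities already established.
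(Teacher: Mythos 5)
Your proof is correct and takes essentially the same route as the paper: the paper's proof also rests on the observation that $\mE^{\pr}_h$ depends on $h$ only through conjugation, writing $\mE^{\pr}_h=h^{-1}\circ(\textrm{an }h\textrm{-independent operator})\circ h$ and applying the Leibniz rule for $d^{\mM}$, which is exactly your identity $[\mE^{\pr}_h,\,d^{\mM}+\theta]=\mc{H}^{-1}[\mf{D},d^{\mM}]\mc{H}=0$ in different packaging. The second identity $d^{\mM}\tR=d^{\mM}\mc{R}_h$ is likewise read off in both arguments directly from \eqref{universal curvature} together with $(d^{\mM})^2=0$.
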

	
	\begin{proof}
		Since $\mE_h=\mE_h^{\pr} + \mE^{\dpr}$ and $\mE^{\dpr}$ is independent of $h$, we have
		\begin{equation}
		{d^{\mM}}\mE_h = d^{\mM}\mE_h^{\pr}
		\end{equation}
		By the explicit construction of $\mE^{\pr}_h$ from $\mE^{\dpr}$ and $h$, we can write
		\begin{equation}
		\mE^{\pr}_h=h^{-1}\circ \mE^{\dpr}\circ h
		\end{equation}
		Taking exterior differential in $\mc{M}$-variable, we have
		\begin{align*}
		d^{\mc{M}}\mE^{\pr}_h
		& = d^{\mc{M}}h^{-1} \circ \mb{E}^{\pr} \circ h-h^{-1}\circ\mE^{\pr} \circ d^{\mM}h\\
		&=-h^{-1}\circ d^{\mM}h\circ h^{-1}\circ\mE^{\pr}\circ h-h^{-1} \circ \mE^{\pr} \circ h \circ h^{-1} \circ d^{\mM}h \\
		&=-\theta\circ\mE^{\pr}_h-\mE^{\pr}_h\circ\theta 	
		=-[\mE^{\pr}_h,\theta]
		\end{align*}
		The second equality is a simple consequence of equation \eqref{universal curvature}.
	\end{proof}

	\subsection{Bott-Chern double transgression formula}
	Recall the characteristic forms defined in the previous section $f(E^{\bullet}, \mb{E}^{\dpr},h)=\tr f(\mc{R}_h)$ for a convergent power series $f(T)$, the following transgression formula computes the deformation of the characteristic forms over $\mc{M}$.
	
	\begin{proposition}[First Transgression Formula]
		\label{first transgression}
		\begin{equation}\label{1st transgress delbar}
		d^{\mM}\tr f(\Rh)
		= -\delbar^X\tr\{f^{\pr}(\mc{R}_h) \cdot [\mE_h^{\pr},\theta]\}
		\end{equation}
		\begin{equation}\label{1st transgress del}
		\del^X\tr\{ f^{\pr}(\mc{R}_h) \cdot [\mE_h^{\pr},\theta] \} = 0
		\end{equation}
	\end{proposition}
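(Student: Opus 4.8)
The plan is to reduce to the monomial case $f(T)=T^{n}$ by linearity (the general convergent series then follows termwise), to prove the weak form of \eqref{1st transgress delbar} with $d^{X}$ in place of $\delbar^{X}$ by a Chern--Weil transgression argument on the universal superconnection, and finally to prove \eqref{1st transgress del}, which upgrades $d^{X}$ to $\delbar^{X}$ and closes the argument.

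For the transgression step I would start from $d^{\mM}\tr(\Rh^{n})=n\,\tr\big((d^{\mM}\Rh)\,\Rh^{n-1}\big)$, valid because $\tr$ is fiberwise-linear over $\mM\times X$, $d^{\mM}$ is a derivation, and $\Rh$ has even total degree so graded cyclicity carries no sign. The heart of this step is computing $d^{\mM}\Rh$: applying the Bianchi identity to the universal superconnection $\tmE=d^{\mM}+\mEh$ (the proof of Lemma \ref{Bianchi} uses only $\tR=\tmE^{2}$), extracting the $\mM$-degree-one component of $[\tmE,\tR]=0$, and using $\tR=d^{\mM}\mEh+\Rh$ from \eqref{universal curvature} together with $[\mEh,\Rh]=0$ and Proposition \ref{derivative of connection}, gives $d^{\mM}\Rh=[\mEh,[\mEhp,\theta]]$. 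Since $[\mEh,f^{\pr}(\Rh)]=0$ by Bianchi, $\mEh$ can be pulled outside the product: setting $\gamma:=f^{\pr}(\Rh)[\mEhp,\theta]$ one gets $d^{\mM}\tr f(\Rh)=\tr[\mEh,\gamma]$, and the standard identity relating $\tr[\mEh,\cdot]$ to $d^{X}\tr(\cdot)$ (the mechanism behind Proposition \ref{characteristic forms are closed}), applied to the $\mM$-degree-one element $\gamma$, yields $d^{\mM}\tr f(\Rh)=-d^{X}\tr\{f^{\pr}(\Rh)[\mEhp,\theta]\}$.

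The genuinely new point is the refinement from $d^{X}$ to $\delbar^{X}$, i.e.\ the vanishing $\del^{X}\tr\gamma=0$ asserted in \eqref{1st transgress del}. I would get it from the exotic grading of Lemma \ref{curvature is degree 0}: $f^{\pr}(\Rh)\in\mc{G}^{0}$, and $[\mEhp,\theta]\in\mc{G}^{-1}$ because $\mEhp$ has exotic degree $-1$ (Proposition \ref{Chern superconnection existence}) while $\theta$ has $X$-bidegree $(0,0)$ with values in $\End^{0}E$, so that $\mEdp$ has exotic degree $+1$ by the mirror construction; hence $\gamma\in\mc{G}^{-1}$ is exotically homogeneous. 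Now split $\tr[\mEh,\gamma]=\tr[\mEhp,\gamma]+\tr[\mEdp,\gamma]$ and track $X$-bidegrees: $\tr$ retains only $\End^{0}$-components, where the exotic degree of a $(p,q)$-form is $q-p$; since $\mEhp$ lowers and $\mEdp$ raises the exotic degree by one, $\tr\gamma$ is a sum of $(p,p-1)$-forms, $\tr[\mEhp,\gamma]$ a sum of $(p,p-2)$-forms, and $\tr[\mEdp,\gamma]$ a sum of $(p,p)$-forms, whereas $\del^{X}\tr\gamma$ is a sum of $(p+1,p-1)$-forms and $\delbar^{X}\tr\gamma$ a sum of $(p,p)$-forms. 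Since the two resulting values of (holomorphic degree) minus (antiholomorphic degree), namely $2$ and $0$, are distinct, this forces $\tr[\mEhp,\gamma]=\pm\del^{X}\tr\gamma$ and $\tr[\mEdp,\gamma]=\pm\delbar^{X}\tr\gamma$. Finally $[\mEhp,\gamma]=0$: graded Leibniz rewrites it as $[\mEhp,f^{\pr}(\Rh)]\,[\mEhp,\theta]+f^{\pr}(\Rh)\,[\mEhp,[\mEhp,\theta]]$, and since $\mEhp$ is odd with $(\mEhp)^{2}=0$ (Lemma \ref{curvature is degree 0}), the graded Jacobi identity gives $[\mEhp,[\mEhp,\eta]]=[(\mEhp)^{2},\eta]=0$ for every $\eta$: taking $\eta=\theta$ kills the second term, and taking $\eta=\mEdp$ gives $[\mEhp,\Rh]=[\mEhp,[\mEhp,\mEdp]]=0$, hence $[\mEhp,f^{\pr}(\Rh)]=0$ and the first term vanishes too. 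Therefore $\del^{X}\tr\gamma=0$, which is \eqref{1st transgress del}; combining with the previous paragraph, $d^{X}\tr\gamma=\delbar^{X}\tr\gamma$, so $d^{\mM}\tr f(\Rh)=-\delbar^{X}\tr\{f^{\pr}(\Rh)[\mEhp,\theta]\}$, which is \eqref{1st transgress delbar}.

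The step I expect to be the main obstacle is precisely the bidegree matching that identifies the $\del^{X}$-part of $d^{X}\tr\gamma$ with $\tr[\mEhp,\gamma]$: it is legitimate only because $\gamma$ is of pure exotic degree, so Lemma \ref{curvature is degree 0} is doing essential structural work here rather than being a formality. A smaller but genuine nuisance is sign bookkeeping: $\theta$, and hence $[\mEhp,\theta]$ and $\gamma$, carry $\mM$-degree, so graded cyclicity of $\tr$ and the identity $\tr[\mEh,\cdot]=\pm d^{X}\tr(\cdot)$ pick up signs equal to $(-1)$ to the $\mM$-degree, which must be followed carefully to land on the signs stated in the proposition.
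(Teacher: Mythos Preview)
Your proof is correct and follows the same overall strategy as the paper: reduce to monomials, use the universal superconnection $\tmE=d^{\mM}+\mEh$ to obtain the intermediate identity $d^{\mM}\tr f(\Rh)=\pm d^{X}\tr\{f^{\pr}(\Rh)\cdot[\mEhp,\theta]\}$, and then separate $\del^{X}$ from $\delbar^{X}$ via the exotic grading. The paper reaches the intermediate formula by expanding $0=d^{\mM\times X}\tr f(\tR)$ with $\tR=\Rh+d^{\mM}\mEh$ and collecting the $\mM$-degree-one terms; you instead compute $d^{\mM}\Rh=[\mEh,[\mEhp,\theta]]$ first (this is exactly the paper's later Lemma~\ref{derivative of curvature}) and then pull $\mEh$ outside using $[\mEh,f^{\pr}(\Rh)]=0$. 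These are two organizations of the same calculation.

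The one genuine difference is your treatment of \eqref{1st transgress del}. The paper obtains it for free: once $d^{\mM}\tr f(\Rh)=d^{X}\tr\gamma$ is established, the left side lies in $\mc{G}^{0}$ while $\tr\gamma\in\mc{G}^{-1}$, so the $\del^{X}$-piece of the right side lands in $\mc{G}^{-2}$ and must vanish. You take a longer route: you match $\del^{X}\tr\gamma$ with $\tr[\mEhp,\gamma]$ by bidegree, and then prove the stronger statement $[\mEhp,\gamma]=0$ algebraically from $(\mEhp)^{2}=0$ and graded Jacobi. This is correct and even yields more (the vanishing before taking trace), but it is unnecessary work here; the paper's bare exotic-degree comparison already suffices. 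Incidentally, the identification $\tr[\mEhp,\gamma]=\del^{X}\tr\gamma$ does not actually require $\gamma$ to be exotically homogeneous: it holds for any $\gamma$, since the supertrace kills the supercommutators with the $\mc{A}^{\bullet,0}$-linear pieces of $\mEhp$ and only the $\del^{X}$-connection part survives.
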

	
	\begin{proof}
		On the universal Hermitian cohesive module $\tE$, by proposition \ref{characteristic forms are closed} and equation \eqref{universal curvature}, we have
		\begin{equation}\label{transgression.2}
		0 = d^{\mM\times X}\tr{f(\tR)} 
		= d^{\mM\times X}\tr{f(\mc{R}_h + d^{\mM}\mE_h)}
		\end{equation}
		Without loss of generality, we assume $f(T) = T^n$ and we expand (\ref{transgression.2}) by the multilinear property of $\tr f(T)$ to get: 
		\begin{equation}
		\label{transgression.3}
		0 = d^X \tr \mc{R}^n_h + d^{\mc{M}} \tr \mc{R}^n_h + d^{X}\sum_i \tr(\mc{R}_h^i \cdot d^{\mM}\mE_h \cdot \mc{R}_h^{n-1-i}) + ... 
		\end{equation}
		The first term is zero by proposition \ref{characteristic forms are closed} and the omitted terms are at least degree $2$ forms in $\mc{M}$-variables. If we collects the forms of degree one in the $\mM$-variables, we have
		\begin{equation}\label{transgression.4}
		0 = d^{\mM} \tr(\mc{R}_h^n) + n\cdot d^{X}\tr(\mc{R}_h^{n-1} \cdot {d^{\mM}}\mE_h)
		\end{equation}
		where we commute $\mc{R}_h^i \cdot d^{\mM}\mE_h$ and $\mc{R}_h^{n-1-i}$ under $\tr$. Using Lemma \ref{derivative of connection} $d^{\mM}\mb{E}_h = -[\mb{E}^{\pr}_h, \theta]$, we have the following equality:
		\begin{equation}\label{transgression.5}
		d^{\mM}\tr f(\mc{R}_h)
		= d^{X}\tr\{f^{\pr}(\mc{R}_h) \cdot [\mE_h^{\pr},\theta]\}
		\end{equation}
		
		Now we compare both sides of (\ref{transgression.5}) and the consider the subspaces $\mc{G}^{\bullet}$ defined by the exotic degree.  On the left side of (\ref{transgression.5}), we have a one form on $\mc{M}$ with values in $\mc{G}^0$ since $\mc{R}_h \in \mc{G}^0$ by Lemma \ref{curvature is degree 0}. On the right hand side, $\mc{R}_h$ is in $\mc{G}^0$ while $[\mE_h^{\pr},\theta]$ is in $\mc{G}^{-1}$ since $\mE_h^{\pr}$ is of exotic degree $-1$ and $\theta$ is of exotic degree $0$. Finally,  since $\del^X$ increases the exotic degree by $-1$ while $\delbar^X$ increases the exotic degree by $1$, we get equation \eqref{1st transgress delbar} by comparing the $\mc{G}^0$ component and equation \eqref{1st transgress del} by the $\mc{G}^{-2}$ component.
	\end{proof}
	
	Our next goal is to express $\tr\{f^{\pr}(\mc{R}_h) \cdot [\mE_h^{\pr},\theta]\}$ in equation \eqref{1st transgress del} as the image of $\del^X$. To do this, we first introduce a notation.
	\begin{definition}
		If $g(T)=T^n$, for a pair $(A;B)$ of variables, we define
		\begin{equation}
		g(A;B)=\sum_{i=1}^{n}A^{i-1}BA^{n-i}
		\end{equation} 
		In general for a convergent power series $g(T)$, we define $g(A; B)$ by the previous formula for its homogeneous components and take the sum. A simple norm estimates shows the convergence.
	\end{definition}
	
	\begin{proposition}[Second Transgression formula]
		\label{double transgression}
		\begin{equation}\label{2nd transgression}
		\tr\{f^{\pr}(\mc{R}_h) \cdot [\mE_h^{\pr},\theta]\} 
		= \del^X \tr\{f^{\pr}(\mc{R}_h) \cdot \theta\}
		\end{equation}	
	\end{proposition}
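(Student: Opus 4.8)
The plan is to deduce \eqref{2nd transgression} from a Chern--Weil-type identity for the holomorphic component $\mE_h^{\pr}$ of the Chern superconnection, together with the vanishing of $[\mE_h^{\pr},\mc{R}_h]$. First I would establish that for any $\alpha\in\mc{A}^{\bullet}(X,\End^{\bullet}E)$ (possibly carrying extra form degree in the $\mM$-direction, which is inert in what follows),
\begin{equation}\label{del-chern-weil}
\del^X\tr\{\alpha\}=\tr\{[\mE_h^{\pr},\alpha]\}.
\end{equation}
This holds because $\mE_h^{\pr}$ is a $\del^X$-superconnection, hence locally of the form $\del^X+\omega^{\pr}$ with $\omega^{\pr}\in\mc{A}^{\bullet,0}(X,\End^{\bullet}E)$, so that $[\mE_h^{\pr},\alpha]=\del^X\alpha+[\omega^{\pr},\alpha]$; the supertrace kills the commutator $[\omega^{\pr},\alpha]$ by graded cyclicity, and since both sides of \eqref{del-chern-weil} are globally defined the local verification suffices. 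Equivalently, \eqref{del-chern-weil} is the exotic-degree $(\deg\alpha-1)$ component of the standard identity $d^X\tr\{\alpha\}=\tr\{[\mE_h,\alpha]\}$, isolated exactly as in the proof of Proposition \ref{first transgression}, using that $\mE_h^{\pr}$ lowers exotic degree by one and $\mE^{\dpr}$ raises it by one, matching $\del^X$ and $\delbar^X$.

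The second ingredient is $[\mE_h^{\pr},f^{\pr}(\mc{R}_h)]=0$. By Lemma \ref{curvature is degree 0} we have $(\mE_h^{\pr})^2=0$ and $\mc{R}_h=[\mE_h^{\pr},\mE^{\dpr}]$, so the graded Jacobi identity gives $[\mE_h^{\pr},\mc{R}_h]=[\mE_h^{\pr},[\mE_h^{\pr},\mE^{\dpr}]]=[(\mE_h^{\pr})^2,\mE^{\dpr}]=0$; alternatively this is the exotic-degree $-1$ component of the Bianchi identity $[\mE_h,\mc{R}_h]=0$ (Lemma \ref{Bianchi}), since $\mc{R}_h\in\mc{G}^0$. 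As $\mc{R}_h$ has even total degree, $\mE_h^{\pr}$ commutes with every power $\mc{R}_h^{n}$ and hence with the convergent series $f^{\pr}(\mc{R}_h)$.

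To conclude, I would apply \eqref{del-chern-weil} to $\alpha=f^{\pr}(\mc{R}_h)\cdot\theta$ and expand the bracket with the graded Leibniz rule for the degree-one derivation $[\mE_h^{\pr},-]$, the Leibniz sign being $+1$ because $f^{\pr}(\mc{R}_h)$ has even total degree:
\begin{align*}
\del^X\tr\{f^{\pr}(\mc{R}_h)\cdot\theta\}
&=\tr\{[\mE_h^{\pr},f^{\pr}(\mc{R}_h)\cdot\theta]\}\\
&=\tr\{[\mE_h^{\pr},f^{\pr}(\mc{R}_h)]\cdot\theta\}+\tr\{f^{\pr}(\mc{R}_h)\cdot[\mE_h^{\pr},\theta]\}\\
&=\tr\{f^{\pr}(\mc{R}_h)\cdot[\mE_h^{\pr},\theta]\},
\end{align*}
the first summand vanishing by the previous paragraph. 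This is exactly \eqref{2nd transgression}.

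The argument is short, so the main point requiring care is the bookkeeping. One must justify that \eqref{del-chern-weil} really isolates the $\del^X$-part of $d^X\tr\{\alpha\}=\tr\{[\mE_h,\alpha]\}$ --- the same exotic-degree separation already performed in Proposition \ref{first transgression} --- and keep signs consistent in the presence of the odd $\mM$-form $\theta$. The latter can be avoided outright by contracting both sides of \eqref{2nd transgression} with a tangent vector $v\in T_h\mM$: this replaces $\theta$ with the Hermitian endomorphism $\iota_v\theta=h^{-1}(v\cdot h)\in\mc{A}^0(X,\End^0E)$ and reduces the claim to a purely $X$-statement to which the three steps above apply verbatim, now with no $\mM$-degrees and no extra signs.
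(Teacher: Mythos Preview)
Your proof is correct and takes essentially the same approach as the paper: both use the Chern--Weil identity $d^X\tr=\tr[\mE_h,\,\cdot\,]$, expand by the graded Leibniz rule, kill the $[\mE_h,f^{\pr}(\mc{R}_h)]$ term via Bianchi, and separate by exotic degree to isolate $\del^X$. The only difference is cosmetic---you perform the exotic-degree separation upfront (on the Chern--Weil and Bianchi identities themselves, obtaining $\del^X\tr=\tr[\mE_h^{\pr},\,\cdot\,]$ and $[\mE_h^{\pr},\mc{R}_h]=0$) rather than at the end of the computation as the paper does.
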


	\begin{proof} 
		For each $\mE_h$ along the vertical fiber $X$, we have 
		\begin{equation}\label{2nd transgression.1}
			d^X\tr \{ f^{\pr}(\mc{R}_h) \cdot \theta \}
			= \tr [\mE_h,f^{\pr}(\mc{R}_h) \cdot \theta]
		\end{equation}
		By the graded Jacobi identity, we can expand the right hand side of the above equation and compute
		\begin{align*}
			d^X\tr\{f^{\pr}(\mc{R}_h) \cdot \theta\}
			& = \tr\{[\mE_h,f^{\pr}(\mc{R}_h)] \cdot \theta\}
			+ \tr\{f^{\pr}(\mc{R}_h) \cdot [\mE_h,\theta]\} \\
			&=\tr\{f^{\pr}(\mc{R}_h;[\mE_h,\mc{R}_h]) \cdot  \theta\} + \tr\{f^{\pr}(\mc{R}_h) \cdot [\mE_h,\theta]\}
		\end{align*}
		By Bianchi identity, the first term is zero. Hence we have
		\begin{equation}\label{2nd transgression.2}
		d^X\tr\{f^{\pr}(\mc{R}_h) \cdot \theta\}
		= \tr\{f^{\pr}(\mc{R}_h) \cdot [\mE_h^{\pr},\theta]\}
		+ \tr\{f^{\pr}(\mc{R}_h) \cdot [\mE_h^{\dpr},\theta]\}
		\end{equation}
		Note as before, $[\mE_h^{\pr},\theta] \in \mc{G}^{-1}$, $[\mE_h^{\dpr},\theta] \in \mc{G}^{1}$ and $\tr \{f^{\pr}(\mc{R}_h) \cdot \theta\} \in \mc{G}^0$, comparing the $\mc{G}^{-1}$ component we have \eqref{2nd transgression.1}.
	\end{proof}
	
	Combining the first transgression formula (\ref{first transgression}) and second transgression formula (\ref{2nd transgression}), we established the double transgression formula for characteristic forms on cohesive modules.
	\begin{corollary}[Bott-Chern formula]
		The $\mc{M}$-directional differential of the characteristic form $f(E, \mb{E}^{\dpr}, h)$ at $h$ is given by:
		\begin{equation}\label{Bott Chern}
			d^{\mM}\tr f(\mc{R}_h) = \del^X\delbar^X\tr \{f^{\pr}(\mc{R}_h) \cdot \theta \}
		\end{equation}
	\end{corollary}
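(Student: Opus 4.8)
The plan is to obtain \eqref{Bott Chern} by chaining the two transgression formulas already established, inserting only a sign manipulation between them. From the First Transgression Formula (Proposition \ref{first transgression}) we have
\[
d^{\mM}\tr f(\Rh) = -\,\delbar^X\tr\{f^{\pr}(\Rh)\cdot[\mE_h^{\pr},\theta]\},
\]
while the Second Transgression Formula (Proposition \ref{double transgression}) identifies the inner one-form as $\del^X$-exact,
\[
\tr\{f^{\pr}(\Rh)\cdot[\mE_h^{\pr},\theta]\} = \del^X\tr\{f^{\pr}(\Rh)\cdot\theta\}.
\]
Substituting the second identity into the first yields $d^{\mM}\tr f(\Rh) = -\,\delbar^X\del^X\tr\{f^{\pr}(\Rh)\cdot\theta\}$.

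The remaining step is formal: on $\mc{A}^{\bullet}(X)$ and on $\mc{A}^{\bullet}(X,\End^{\bullet}E)$ the operators $\del^X$ and $\delbar^X$ anticommute, because $(d^X)^2 = 0$ together with $(\del^X)^2 = (\delbar^X)^2 = 0$ forces $\del^X\delbar^X = -\,\delbar^X\del^X$. Applying this rewrites the right-hand side as $\del^X\delbar^X\tr\{f^{\pr}(\Rh)\cdot\theta\}$, which is precisely \eqref{Bott Chern}.

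Because both ingredients are in hand, there is essentially no obstacle here; the only points that require attention are the sign bookkeeping (the anticommutation just used, together with the signs already tracked through the exotic grading in the two propositions) and the fact that the secondary form $\tr\{f^{\pr}(\Rh)\cdot\theta\}$ is well defined, which follows from the convergence of $f^{\pr}(T)$ together with $\Rh\in\mc{G}^0$ (Lemma \ref{curvature is degree 0}). It is worth recording for bookkeeping that $\tr\{f^{\pr}(\Rh)\cdot\theta\}$ itself lies in $\mc{G}^0$, since both $\Rh$ and $\theta$ have exotic degree $0$; hence its image under $\del^X\delbar^X$ is again of exotic degree $0$, i.e.\ a sum of $(p,p)$-forms, consistent with $\tr f(\Rh)$ being a closed $\mc{G}^0$-form. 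This exhibits the $\mc{M}$-variation of $\tr f(\Rh)$ as a $\del^X\delbar^X$-exact $\mc{G}^0$-form, which is exactly the statement needed to conclude that the Bott--Chern characteristic classes are independent of the Hermitian metric $h$.
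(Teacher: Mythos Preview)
Your proof is correct and matches the paper's approach exactly: the corollary is stated as an immediate consequence of combining the first transgression formula (Proposition~\ref{first transgression}) with the second (Proposition~\ref{double transgression}), followed by the anticommutation $\del^X\delbar^X = -\delbar^X\del^X$. Your additional remarks on the exotic degree of the secondary form are accurate and consistent with the paper's later use of this observation.
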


	This generalizes the classical formula obtained by Bott and Chern in \cite{Botta} for holomorphic Hermitian vector bundles. If we view the previous equality in Bott-Chern cohomology, the right hand side is zero and we established the invariance of characteristic classes under metric deformation. 
	
	\begin{corollary}
		The characteristic forms $f(E, \mb{E}^{\dpr}, h)$ in the Bott-Chern cohomology are independent of the choice of a Hermitian metric $h$.
	\end{corollary}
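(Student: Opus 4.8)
The plan is to join two Hermitian metrics by the obvious segment in $\mM$ and integrate the double transgression formula \eqref{Bott Chern} along it. Let $h_0,h_1\in\mM$. Since the defining condition on a metric on $E^\bullet$ (that the summands $E^k$ be mutually orthogonal) is preserved under convex combinations, the path $h_t=(1-t)h_0+th_1$, $t\in[0,1]$, lies in $\mM$; write $\gamma\colon[0,1]\to\mM$ for this path and pull back the universal Hermitian cohesive module $\widetilde E$ along $\gamma\times\mr{id}_X\colon[0,1]\times X\to\mM\times X$. All the operators $d^{\mM}$, $\del^X$, $\delbar^X$, $\tr$, and the metric-dependent data $\mE_h$, $\Rh$, $\theta$ are natural, so the Bott--Chern formula \eqref{Bott Chern} pulls back to an identity of differential forms on $[0,1]\times X$.

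Directly from the definition $\theta=h^{-1}d^{\mM}h$ (Definition \ref{MC form}, equation \eqref{theta}), the Maurer--Cartan form pulls back to $\gamma^*\theta=\psi_t\,dt$, where $\psi_t:=h_t^{-1}\dot h_t\in\mc{A}^0(X,\End^0E)$ is the smooth family of $h_t$-Hermitian endomorphisms of $E^\bullet$ measuring the variation of $h_t$. Extracting the $dt$-component of the pulled-back formula \eqref{Bott Chern}, and noting that $\del^X$ and $\delbar^X$ act only along $X$ and commute past the closed one-form $dt$ with a sign, a short sign computation gives
\begin{equation}
\frac{\partial}{\partial t}\,\tr f(\Rh)=\del^X\delbar^X\,\tr\{f^{\pr}(\Rh)\cdot\psi_t\},
\end{equation}
where $\Rh=\mc{R}_{h_t}$ now denotes the curvature of the Chern superconnection of $(E^\bullet,\mb{E}^{\dpr},h_t)$.

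Applying the fundamental theorem of calculus to the smooth family $t\mapsto\tr f(\mc{R}_{h_t})$ and pulling $\del^X\delbar^X$ out of the integral (legitimate, since every term is smooth on the compact interval $[0,1]$) yields
\begin{equation}
\tr f(\mc{R}_{h_1})-\tr f(\mc{R}_{h_0})=\del^X\delbar^X\int_0^1\tr\{f^{\pr}(\mc{R}_{h_t})\cdot\psi_t\}\,dt .
\end{equation}
By Lemma \ref{curvature is degree 0} the forms $f^{\pr}(\mc{R}_{h_t})$ lie in $\mc{G}^0$, and $\psi_t$ has exotic degree $0$ as well, so the form $\int_0^1\tr\{f^{\pr}(\mc{R}_{h_t})\cdot\psi_t\}\,dt$ lies in $\bigoplus_p\mc{A}^{p,p}(X)$; its component in $\mc{A}^{p-1,p-1}(X)$ then exhibits the $(p,p)$-part of $\tr f(\mc{R}_{h_1})-\tr f(\mc{R}_{h_0})$ as an element of $B^p(X)$. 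Since $\tr f(\mc{R}_{h_0})$ and $\tr f(\mc{R}_{h_1})$ are both $d^X$-closed by Proposition \ref{characteristic forms are closed}, they define the same class in $H^{\bullet}_{BC}(X)$, which is the assertion. The one genuinely substantive input, the double transgression formula \eqref{Bott Chern}, is already in hand; what remains is purely the homotopy/Stokes bookkeeping above, so I expect no real obstacle — the only points requiring care are the sign incurred when commuting $\del^X\delbar^X$ past $dt$ and the precise identification $\gamma^*\theta=\psi_t\,dt$.
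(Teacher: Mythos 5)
Your argument is correct and is essentially the paper's own: the paper deduces the corollary directly from the Bott--Chern formula \eqref{Bott Chern} by observing that its right-hand side vanishes in Bott--Chern cohomology, the integration along a (convex) path in $\mM$ being left implicit there and carried out explicitly later for the secondary classes. Your writeup simply fills in that integration and the identification $\gamma^*\theta=\psi_t\,dt$, which is exactly the intended bookkeeping.
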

	
	In the last section, we will show the characteristic classes only dependent on the homotopy class of the cohesive module. 
	
	\subsection{Transgression formula for secondary classes}
	Our next goal is to study the differential forms $\tr(f^{\pr}(\mc{R}_h)\cdot \theta)$ that appear in the double transgression formula. As we will prove later,  they are the secondary characteristic classes and define holomorphic analog of the Chern-Simons forms. We will use them to study infinite determinant bundle and stability in a separate paper. For their applications in holomorphic vector bundles, see the classical work of Bismut, Gillet and Soul\'e in \cite{Bismut1988}\cite{Bismut1988b}\cite{Bismut1988c}.
	
	\begin{lemma}{\label{derivative of curvature}} 
		The curvature tensors $\mc{R}_h$ has its $\mM$-directional differential given by:
		\begin{equation}\label{derivative of curvature.1}
		d^{\mM} \mc{R}_h = [\mEh,[\mEhp,\theta]]
		\end{equation}
	\end{lemma}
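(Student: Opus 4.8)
The plan is to differentiate, along $\mM$, the expression $\Rh = [\mEhp,\mEdp]$ furnished by Lemma \ref{curvature is degree 0}, rather than the equivalent $\Rh = \mEh^2$. The key inputs will be that the $\delbar^X$-part $\mEdp$ of the cohesive structure is fixed (independent of $h$), the formula $d^{\mM}\mEh = -[\mEhp,\theta]$ from Proposition \ref{derivative of connection}, and the identity $(\mEhp)^2 = 0$, also from Lemma \ref{curvature is degree 0}.

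First, since $\mEdp$ does not depend on $h$ and $d^{\mM}$ is an (odd) derivation of the operator algebra, applying $d^{\mM}$ to $\Rh = [\mEhp,\mEdp]$ only affects the first slot; a short sign computation gives $d^{\mM}\Rh = [d^{\mM}\mEhp,\mEdp] = [d^{\mM}\mEh,\mEdp] = -[[\mEhp,\theta],\mEdp]$, where the middle equality uses $d^{\mM}\mEdp=0$. Because $[\mEhp,\theta]$ has even total degree, graded antisymmetry rewrites this as $d^{\mM}\Rh = [\mEdp,[\mEhp,\theta]]$. The second step is to observe that $[\mEhp,[\mEhp,\theta]] = 0$: by the graded Jacobi identity this bracket equals $\frac{1}{2}[[\mEhp,\mEhp],\theta] = [(\mEhp)^2,\theta]$, which vanishes since $(\mEhp)^2 = 0$. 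Adding this null term to the previous identity and using $\mEdp + \mEhp = \mEh$ then assembles the answer into $d^{\mM}\Rh = [\mEh,[\mEhp,\theta]]$, which is the asserted formula \eqref{derivative of curvature.1}.

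An alternative, more conceptual, route is to feed the universal superconnection $\tmE = d^{\mM} + \mEh$ into the Bianchi identity $[\tmE,\tR] = 0$ (the purely formal argument of Lemma \ref{Bianchi} applies verbatim to $\tmE$), substitute $\tR = \Rh + d^{\mM}\mEh = \Rh - [\mEhp,\theta]$ using \eqref{universal curvature} and Proposition \ref{derivative of connection}, and then extract the part of $\mM$-degree one; the $\mM$-degree-zero part is merely the ordinary Bianchi identity $[\mEh,\Rh] = 0$, the $\mM$-degree-two part is an identity for $[\mEhp,\theta]$ that is not needed, and the $\mM$-degree-one part reads $d^{\mM}\Rh - [\mEh,[\mEhp,\theta]] = 0$ once one uses $[d^{\mM},\Rh] = d^{\mM}\Rh$. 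In either approach the only thing requiring care is the Koszul-sign bookkeeping — the parities of $d^{\mM}\mEh$ and $[\mEhp,\theta]$ (both even) and the identity $[d^{\mM},\Rh] = d^{\mM}\Rh$, which holds because $\Rh$ is a genuine $\End$-valued form in the $X$-directions — so I do not expect any substantive obstacle.
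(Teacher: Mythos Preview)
Your proposal is correct. Your \emph{alternative} route --- applying the universal Bianchi identity $[\tmE,\tR]=0$ and extracting the $\mM$-degree-one component --- is exactly the paper's proof; the paper expands $[d^{\mM}+\mEh,\Rh-[\mEhp,\theta]]=0$, discards $[\mEh,\Rh]$ via Bianchi and $(d^{\mM})^2\mEh$ via $(d^{\mM})^2=0$, and reads off the result.

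Your \emph{primary} route is a slight shortcut: differentiating $\Rh=[\mEhp,\mEdp]$ directly avoids ever introducing the universal curvature $\tR$ and sidesteps the term $(d^{\mM})^2\mEh$ entirely. The only nontrivial ingredient beyond Proposition~\ref{derivative of connection} is the vanishing $[\mEhp,[\mEhp,\theta]]=0$, which you correctly deduce from $(\mEhp)^2=0$ and the Jacobi identity. This direct approach is marginally cleaner for this particular lemma; the paper's Bianchi approach has the advantage of being the same machine used elsewhere (e.g.\ in Proposition~\ref{first transgression}), so there is a uniformity gain even if the individual computation is a touch longer.
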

	
	\begin{proof}
		By Bianchi identity for the universal cohesive module and use Lemma \ref{derivative of connection}, we have:
		\begin{equation}\label{derivative of curvature.2}
			0 = [\widetilde{\mb{E}}, \tR] = [{d^\mM} + \mEh,\Rh-[\mEhp,\theta]]
		\end{equation}
		Expand the terms and use Lemma \ref{derivative of connection} again, we have
		\begin{equation}\label{derivative of curvature.3}
			0 = d^{\mM}\Rh + [\mE_h,\mc{R}_h] - [\mE_h,[\mE_h^{\pr},\theta]] + (d^{\mM})^2(\mEh)
		\end{equation}
		By Bianchi identity for $\mb{E}_h$, the second term in (\ref{derivative of curvature.3}) vanishes and we get (\ref{derivative of curvature.1}).
	\end{proof}
	
	We will start to prove the main result of this section. Like the double transgression formula of Bott and Chern, the goal is to compute the $\mc{M}$-directional derivative of $\tr\{f^{\pr}(\Rh)\cdot \theta \}$ and show it's in the image of $\del^X$ and $\delbar^X$. We break the lengthy computation into several lemmas.
	
	\begin{lemma}\label{prep lemma 1 for third transgression}
		For any convergent power seris $g(T)$, we have
		\begin{align*}\numberthis\label{third transgression.7}
			d^X \tr \{g(\Rh; [\mEhp,\theta]) \cdot \theta \}
			= & \tr \{g(\Rh;[\mEh, [\mEhp, \theta]])\cdot \theta\} \\
			& + \tr \{g(\Rh;[\mEhp, \theta]) \cdot [\mEh, \theta] \}
		\end{align*}
	\end{lemma}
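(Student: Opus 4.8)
The plan is to reduce immediately to a monomial $g(T)=T^n$; the general convergent case then follows by summing over homogeneous components, the convergence being guaranteed exactly as in the definition of $g(A;B)$. The only analytic input needed is the fundamental identity $d^X\tr\Phi=\tr[\mEh,\Phi]$, valid for any $\End^{\bullet}\tE$-valued form $\Phi$ (with coefficients also allowed in $\mc{A}^{\bullet}(\mM)$); this is the same fiberwise statement already used in \eqref{2nd transgression.1}. Applying it to $\Phi=g(\Rh;[\mEhp,\theta])\cdot\theta$ turns $d^X\tr\{g(\Rh;[\mEhp,\theta])\cdot\theta\}$ into $\tr[\mEh,\,g(\Rh;[\mEhp,\theta])\cdot\theta]$, and the whole proof becomes a formal manipulation of the degree-one graded derivation $[\mEh,-]$.

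First I would record the gradings so that the signs are under control: $\mEh$ and $\theta$ have total degree $1$, while $\Rh$ and $[\mEhp,\theta]$ have total degree $2$. Consequently $g(\Rh;[\mEhp,\theta])=\sum_{i=1}^{n}\Rh^{\,i-1}[\mEhp,\theta]\Rh^{\,n-i}$ is a form of even total degree, and the graded Leibniz rule for $[\mEh,-]$ produces no sign:
\[
[\mEh,\,g(\Rh;[\mEhp,\theta])\cdot\theta]=[\mEh,\,g(\Rh;[\mEhp,\theta])]\cdot\theta+g(\Rh;[\mEhp,\theta])\cdot[\mEh,\theta].
\]
The second summand is already the second term on the right-hand side of \eqref{third transgression.7}, so it remains only to identify $\tr\{[\mEh,\,g(\Rh;[\mEhp,\theta])]\cdot\theta\}$ with $\tr\{g(\Rh;[\mEh,[\mEhp,\theta]])\cdot\theta\}$.

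For this last point I would expand $[\mEh,-]$ inside each word $\Rh^{\,i-1}[\mEhp,\theta]\Rh^{\,n-i}$ by the Leibniz rule a second time. Every term in which the derivation lands on a power of $\Rh$ vanishes because $[\mEh,\Rh]=0$ (Lemma \ref{Bianchi} applied to $\mE=\mEh$, $\mc{R}=\Rh$), and since the prefactor $\Rh^{\,i-1}$ has even degree, the surviving term is exactly $\Rh^{\,i-1}[\mEh,[\mEhp,\theta]]\Rh^{\,n-i}$ with coefficient $+1$. Summing over $i$ gives $[\mEh,\,g(\Rh;[\mEhp,\theta])]=g(\Rh;[\mEh,[\mEhp,\theta]])$, and substituting back yields \eqref{third transgression.7}.

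I do not expect a genuine obstacle here: the statement is a purely algebraic transgression-type identity, and the substantive differential-geometric work is deferred to the subsequent lemmas that rewrite $[\mEh,[\mEhp,\theta]]$ and combine the pieces. The one place demanding attention is the sign bookkeeping in the graded Leibniz rule, which the degree count above arranges to be trivial; a reader should simply be warned that $\Rh$ and $[\mEhp,\theta]$ are even while $\mEh$ and $\theta$ are odd.
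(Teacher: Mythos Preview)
Your proposal is correct and follows essentially the same approach as the paper: reduce to a monomial $g(T)=T^n$, use $d^X\tr\Phi=\tr[\mEh,\Phi]$, expand via the graded Leibniz rule, and kill all terms in which $[\mEh,-]$ hits a power of $\Rh$ by the Bianchi identity. The only difference is organizational: the paper expands the product $\Rh^{\,i}[\mEhp,\theta]\Rh^{\,n-1-i}\theta$ in one step into four sums, whereas you first split off the outer factor $\theta$ and then expand $g(\Rh;[\mEhp,\theta])$; the content is identical.
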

	
	\begin{proof}
		Without loss of generality, we assume $g(T) = T^n$. By our definition, $g\{\Rh;[\mEh,[\mEhp,\theta]]\}$ is a summation of the form:
		\begin{equation}\label{third transgression.4}
		g\{\Rh;[\mEh,[\mEhp,\theta]]\}
		= \sum_{i + j = n - 1} \Rh^{i} [\mEh,[\mEhp,\theta]] \Rh^j
		\end{equation} 
		If we consider the following differential, keeping in mind when passing $d^{X}$ over $\tr$, we act via $\mEh$ and follows the Leibniz rule, we have
		\begin{align*}\numberthis \label{third transgression.5}
			&d^X\sum_{i}\tr \{\Rh^{i}[\mEhp,\theta]\Rh^{n-i-1}  \theta\}
			= \sum_{i} d^X\tr \{\Rh^{i}[\mEhp,\theta]\Rh^{n-i-1}  \theta\} \\
			= & \sum_{i + j + k = n - 2} \tr \{\Rh^{i} [\mEh,\Rh]\Rh^{j} [\mEhp,\theta] \Rh^{k}\theta \}
			+ \sum_{i=1}^{n} \tr \{\Rh^{i-1} [\mEh,[\mEhp,\theta]] \Rh^{n-i} \theta\} \\
			+ & \sum_{i + j + k = n - 2}\tr \{\Rh^{i} [\mEhp,\theta] \Rh^{j} [\mEh,\Rh] \Rh^{k}  \theta \}
			+ \sum_{i=1}^{n} \tr \{\Rh^{i-1} [\mEhp,\theta] \Rh^{n-i}  [\mEh,\theta]\}
		\end{align*}
		The terms in the first and third summations are zero by Bianchi identity $[\mEh, \Rh] = 0$. Note that the second and last summations can be identified with $\tr\{g(\Rh;[\mEh, [\mEhp, \theta]])\cdot \theta \}$ and $\tr \{g(\Rh;[\mEhp, \theta]) \cdot [\mEh, \theta] \}$ respectively, the result follows.
	\end{proof}
	
	\begin{lemma}\label{prep lemma 2 for third transgression}
		The $\mM$-directional differential of $\tr \{ g(\Rh) \cdot \theta \}$ satisfies the equality:
		\begin{align*}\numberthis\label{third transgression.10}
			d^{\mM} \tr \{g(\Rh) \cdot \theta \} + \tr \{ g(\Rh) \cdot \theta^2\} 
			= & \delbar^X \tr \{g(\Rh;[\mEhp,\theta]) \cdot \theta \} \\
			& - \tr \{ g(\Rh;[\mEh,\theta]) \cdot [\mEdp,\theta] \}
		\end{align*}
		In addition, we have the following identities:
		\begin{equation}\label{third transgression.11}
			\tr \{g(\Rh; [\mEhp,\theta]) \cdot [\mEhp, \theta] \}=0
		\end{equation}
		\begin{equation}\label{third transgression.11.2}
			\tr \{g(\Rh;[\mEdp,\theta])\cdot[\mEdp,\theta] \}=0
		\end{equation}
	\end{lemma}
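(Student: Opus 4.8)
The argument rests on a few preliminary identities. First, decomposing the Bianchi identity (Lemma \ref{Bianchi}) $0 = [\mEh,\Rh] = [\mEhp,\Rh]+[\mEdp,\Rh]$ by exotic degree — the two summands have exotic degree $-1$ and $+1$ — forces $[\mEhp,\Rh]=0$ and $[\mEdp,\Rh]=0$ separately. Second, the graded Jacobi identity together with $(\mEhp)^{2}=0$ (Lemma \ref{curvature is degree 0}) gives $[\mEhp,[\mEhp,\theta]]=0$, and hence $[\mEh,[\mEhp,\theta]]=[\mEdp,[\mEhp,\theta]]\in\mc{G}^{0}$. With these in hand I would first dispose of \eqref{third transgression.11} and \eqref{third transgression.11.2}. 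Put $P=\mEhp$ (respectively $P=\mEdp$) and $B=[P,\theta]$; since $P$ and $\theta$ are odd, $B=P\theta+\theta P$, and using $P^{2}=0$ one gets $PB=BP=P\theta P$, together with $[P,\Rh]=0$. For $g(T)=T^{n}$ it then suffices to show that each term $\tr\{\Rh^{i-1}B\,\Rh^{n-i}B\}$ vanishes: expand the trailing $B$ as $P\theta+\theta P$, slide the lone $P$ through the neighbouring power of $\Rh$ (allowed since $[P,\Rh]=0$), and in the $\theta P$ summand rotate $P$ cyclically around the supertrace — which produces exactly one sign, as $P$ and the rest of the word are both odd — so the $P\theta$ and $\theta P$ contributions both reduce to $\tr\{\Rh^{i-1}(P\theta P)\Rh^{n-i}\theta\}$ with opposite signs and cancel.

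Next I would compute $d^{\mM}\tr\{g(\Rh)\cdot\theta\}$ directly. Since $\tE$ is flat in the $\mM$-direction, $d^{\mM}$ obeys the ordinary Leibniz rule on the supertrace, and $g(\Rh)$ is even, so
\begin{equation*}
d^{\mM}\tr\{g(\Rh)\cdot\theta\}
= \tr\{g(\Rh;d^{\mM}\Rh)\cdot\theta\}+\tr\{g(\Rh)\cdot d^{\mM}\theta\}.
\end{equation*}
Inserting the Maurer-Cartan equation $d^{\mM}\theta=-\theta^{2}$ (Lemma \ref{MC-equation}) and $d^{\mM}\Rh=[\mEh,[\mEhp,\theta]]$ (Lemma \ref{derivative of curvature}) gives
\begin{equation*}
d^{\mM}\tr\{g(\Rh)\cdot\theta\}+\tr\{g(\Rh)\cdot\theta^{2}\}=\tr\{g(\Rh;[\mEh,[\mEhp,\theta]])\cdot\theta\}.
\end{equation*}

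I would then feed this into Lemma \ref{prep lemma 1 for third transgression}, rewritten as
\begin{equation*}
\tr\{g(\Rh;[\mEh,[\mEhp,\theta]])\cdot\theta\}=d^{X}\tr\{g(\Rh;[\mEhp,\theta])\cdot\theta\}-\tr\{g(\Rh;[\mEhp,\theta])\cdot[\mEh,\theta]\},
\end{equation*}
and extract the exotic-degree-$0$ part. The left-hand side lies in $\mc{G}^{0}$ by the preliminary remark; on the right, $g(\Rh;[\mEhp,\theta])\cdot\theta\in\mc{G}^{-1}$, so $\delbar^{X}$ of it lands in $\mc{G}^{0}$ and $\del^{X}$ in $\mc{G}^{-2}$, while writing $[\mEh,\theta]=[\mEhp,\theta]+[\mEdp,\theta]$ splits the last trace into a $\mc{G}^{-2}$ part (carrying $[\mEhp,\theta]$) and a $\mc{G}^{0}$ part (carrying $[\mEdp,\theta]$). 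Hence the $\mc{G}^{0}$-component reads
\begin{equation*}
\tr\{g(\Rh;[\mEh,[\mEhp,\theta]])\cdot\theta\}=\delbar^{X}\tr\{g(\Rh;[\mEhp,\theta])\cdot\theta\}-\tr\{g(\Rh;[\mEhp,\theta])\cdot[\mEdp,\theta]\},
\end{equation*}
and, since $g(\,\cdot\,;\,\cdot\,)$ is linear in its second slot, replacing $[\mEhp,\theta]$ by $[\mEh,\theta]$ in this last trace changes it only by $\tr\{g(\Rh;[\mEdp,\theta])\cdot[\mEdp,\theta]\}$, which is $0$ by \eqref{third transgression.11.2}. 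Combining the three displays yields \eqref{third transgression.10}. (The $\mc{G}^{-2}$-component of the same identity records, as a bonus, $\del^{X}\tr\{g(\Rh;[\mEhp,\theta])\cdot\theta\}=\tr\{g(\Rh;[\mEhp,\theta])\cdot[\mEhp,\theta]\}$, which vanishes by \eqref{third transgression.11}.)

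The proof is essentially bookkeeping built on Lemmas \ref{MC-equation}, \ref{derivative of curvature} and \ref{prep lemma 1 for third transgression}: the conceptual content is only that the exotic grading separates the single identity of Lemma \ref{prep lemma 1 for third transgression} into the desired $\delbar^{X}$-transgression and an auxiliary $\del^{X}$-statement. The one place that genuinely demands care is the supertrace sign manipulation in the vanishing identities \eqref{third transgression.11} and \eqref{third transgression.11.2}, and making sure the $\mc{G}^{0}$-projection is taken of the correct summands.
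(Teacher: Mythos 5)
Your derivation of the main identity \eqref{third transgression.10} follows the paper's route exactly: Leibniz plus Lemma \ref{MC-equation} and Lemma \ref{derivative of curvature} compute $d^{\mM}\tr\{g(\Rh)\cdot\theta\}$, Lemma \ref{prep lemma 1 for third transgression} converts the result into a $d^X$-exact term plus a remainder, and the $\mc{G}^0$-projection together with \eqref{third transgression.11.2} gives the stated formula. That part is sound. The genuine gap is in your self-contained proof of the vanishing identities \eqref{third transgression.11} and \eqref{third transgression.11.2}, on which the last step depends. You ``rotate $P=\mEhp$ cyclically around the supertrace,'' but $\mEhp$ contains the $(1,0)$-connection component $\mb{E}^{\pr}_1$, a first-order differential operator: the supertrace is neither defined on, nor cyclic for, such operators, and the failure of cyclicity is measured precisely by $\tr\{[\nabla,L]\}=d^X\tr\{L\}$. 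Your rotation silently discards this boundary term. Carried out rigorously, your own preliminary facts ($[P,\Rh]=0$ and $[P,[P,\theta]]=0$) give, with $W=\Rh^{i-1}[P,\theta]\Rh^{n-i}$, the operator identity $W[P,\theta]=[P,W\theta]$, hence $\tr\{W[P,\theta]\}=\del^X\tr\{W\theta\}$, and summing over $i$,
\begin{equation*}
\tr\{g(\Rh;[\mEhp,\theta])\cdot[\mEhp,\theta]\}=\del^X\tr\{g(\Rh;[\mEhp,\theta])\cdot\theta\},
\end{equation*}
not $0$.

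Note that this corrected relation is exactly the $\mc{G}^{-2}$-component of the paper's equation \eqref{third transgression.8} (the ``bonus'' identity you record at the end of your proof), so your computation reproduces the paper's intermediate step but supplies no reason why either side vanishes. To obtain \eqref{third transgression.11} one must still show $\del^X\tr\{g(\Rh;[\mEhp,\theta])\cdot\theta\}=0$, and neither your cancellation nor the exotic-degree bookkeeping delivers that: already for $g(T)=T$ and $E$ an ordinary Hermitian holomorphic bundle the quantity is $\tr\{(\nabla^{1,0}\theta)^2\}=\del^X\tr\{\nabla^{1,0}\theta\cdot\theta\}$, a $(2,0)$-form on $X$ valued in $2$-forms on $\mM$ with no evident algebraic reason to vanish pointwise. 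So this step requires a genuinely different justification (or the vanishing claims must be weakened to ``$\del^X$-exact,'' with the consequences for \eqref{third transgression.10} tracked accordingly); as written, the proof of the two auxiliary identities, and hence of the lemma, is incomplete.
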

	
	\begin{proof}
		Without loss of generality, we assume $g(T) = T^n$ is a monomial. By Leibniz formula, we get	
		\begin{equation}\label{third transgression.2}
			d^{\mM}\tr \{g(\Rh) \cdot \theta \}
			= \tr \{g(\Rh;d^{\mM}\Rh) \cdot \theta \}
			+ \tr \{g(\Rh)\cdot {d^{\mM}}\theta \}
		\end{equation}
		We can substitute $d^{\mM}\Rh$ and $d^{\mM}\theta$ in the above equation by  Lemmas \ref{MC-equation} and Lemma \ref{derivative of curvature} respectively. Then we have
		\begin{equation}\label{third transgression.3}
			d^{\mM}\tr \{g(\Rh) \cdot \theta \}
			=\tr \{g(\Rh;[\mEh,[\mEhp,\theta]]) \cdot \theta \}
			-\tr \{g(\Rh) \cdot \theta^2 \}
		\end{equation}
		Lemma \ref{prep lemma 1 for third transgression} together with equation (\ref{third transgression.3}) shows that
		\begin{align*}\numberthis\label{third transgression.8}
			&d^{\mM}\tr \{g(\Rh) \cdot \theta \} + \tr \{ g(\Rh) \cdot \theta^2 \}\\
			= & d^X\tr \{g(\Rh;[\mEhp,\theta]) \cdot \theta \}
			- \tr \{g(\Rh;[\mEhp,\theta]) \cdot [\mEh,\theta] \}
		\end{align*}
		In equation \eqref{third transgression.8}, the left hand side is a $2$-form on $\mM$ with value in $\mc{G}^0$. For the right hand side of the equation, keeping in mind that $\mEhp$ increases exotic degree by $-1$ while $\mEdp$ increases it by $1$, we can decompose $\tr \{g(\Rh;[\mEhp,\theta]) \cdot [\mEh,\theta] \}$ as a sum of its $\mc{G}^{-2}$ and $\mc{G}^0$ components as:
		\begin{align}{\label{third transgression.9}}
			\tr \{g(\Rh;[\mEhp,\theta]) \cdot [\mEh,\theta] \}
			= & \tr \{g(\Rh;[\mEhp,\theta]) \cdot [\mEhp,\theta] \} \\
			& + \tr \{g(\Rh;[\mEhp,\theta]) \cdot [\mEdp,\theta] \} \nonumber
		\end{align}
		Comparing the $\mc{G}^{-2}$ components in equation \eqref{third transgression.8}, we get equation \eqref{third transgression.11}. Taking its adjoint, we get \eqref{third transgression.11.2}. Finally if we compare the $\mc{G}^0$ components in equation \eqref{third transgression.8}, we get
		\begin{align*}
		d^{\mM} \tr \{g(\Rh) \cdot \theta\} + \tr \{g(\Rh) \cdot \theta^2\} 
		= & \delbar^X \tr \{g (\Rh;[\mEhp,\theta]) \cdot \theta\} \\
		& - \tr \{g (\Rh;[\mEhp,\theta]) [\mEdp,\theta]\}
		\end{align*}
		Adding the zero term $-\tr \{g(\Rh;[\mEdp,\theta])\cdot[\mEdp,\theta]\}$ to the right hand side, we get equation \eqref{third transgression.10}.
	\end{proof}
	
	\begin{lemma}\label{prep lemma 3 for third transgression}
		The last term $\tr \{g (\Rh;[\mEh,\theta]) \cdot [\mEdp,\theta]\}$ in equation \eqref{third transgression.10} is given by:
		\begin{align*}\numberthis\label{third transgression.16}
			\tr \{g (\Rh;[\mEh,\theta]) \cdot [\mEdp,\theta] \} =
			& \del^X\tr \{ g(\Rh;[\mEdp,\theta])\cdot\theta \} \\
			& +\tr\{ g(\Rh;\theta)\cdot[\mEhp,[\mEdp,\theta]] \}
		\end{align*}
		And the term $\tr\{ g(\Rh;[\mEdp,\theta])\cdot\theta \}$ satisfies:
		\begin{equation}\label{third transgression.17}
			\delbar^X \tr \{ g(\Rh;[\mEdp,\theta]) \cdot \theta \}
			+ \tr \{g(\Rh;\theta) \cdot [\mEdp, [\mEdp, \theta]] \} = 0
		\end{equation} 
	\end{lemma}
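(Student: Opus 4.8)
The plan is to imitate the computation of Lemma~\ref{prep lemma 1 for third transgression}, applied this time to the expression $\tr\{g(\Rh;[\mEdp,\theta])\cdot\theta\}$, and then to separate the resulting identity into its components for the exotic grading $\mc{G}^{\bullet}$, cleaning up with the vanishing statement \eqref{third transgression.11.2}.

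By linearity it suffices to treat $g(T)=T^{n}$. The point is that the proof of Lemma~\ref{prep lemma 1 for third transgression} never used anything about $[\mEhp,\theta]$ except that it is an endomorphism-valued form, together with the Bianchi identity (Lemma~\ref{Bianchi}); so the same computation, carried out with $[\mEhp,\theta]$ replaced throughout by $[\mEdp,\theta]$, gives
\begin{equation*}
d^{X}\tr\{g(\Rh;[\mEdp,\theta])\cdot\theta\}
= \tr\{g(\Rh;[\mEh,[\mEdp,\theta]])\cdot\theta\}
+ \tr\{g(\Rh;[\mEdp,\theta])\cdot[\mEh,\theta]\}.
\end{equation*}
I would then split this identity by exotic degree. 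Since $\theta,\Rh\in\mc{G}^{0}$, while $\mEdp$ raises the exotic degree by $1$ and $\mEhp$ lowers it by $1$, the argument of the trace on the left lies in $\mc{G}^{1}$, so that $\del^{X}$ extracts its $\mc{G}^{0}$-component and $\delbar^{X}$ its $\mc{G}^{2}$-component. On the right, writing $[\mEh,\cdot]=[\mEhp,\cdot]+[\mEdp,\cdot]$, the $\mc{G}^{0}$-component is $\tr\{g(\Rh;[\mEhp,[\mEdp,\theta]])\cdot\theta\}+\tr\{g(\Rh;[\mEdp,\theta])\cdot[\mEhp,\theta]\}$ and the $\mc{G}^{2}$-component is $\tr\{g(\Rh;[\mEdp,[\mEdp,\theta]])\cdot\theta\}+\tr\{g(\Rh;[\mEdp,\theta])\cdot[\mEdp,\theta]\}$; in both, the second summand is zero by \eqref{third transgression.11.2}. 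To rewrite the surviving terms in the shape of \eqref{third transgression.16} and \eqref{third transgression.17}, I would use the graded cyclicity of the supertrace in the form $\tr\{g(\Rh;A)\cdot B\}=(-1)^{\norm{A}\,\norm{B}}\tr\{g(\Rh;B)\cdot A\}$, valid because $\Rh$ has even total degree, to interchange the two slots of $g(\Rh;\cdot)$. Comparing $\mc{G}^{0}$-components and absorbing $[\mEhp,\theta]$ back into $[\mEh,\theta]$ (again via \eqref{third transgression.11.2}) produces \eqref{third transgression.16}; comparing $\mc{G}^{2}$-components produces \eqref{third transgression.17}.

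The one place requiring care is the signs. One has to check that the cyclicity sign $(-1)^{\norm{A}\,\norm{B}}$ equals $+1$ when exchanging $[\mEhp,\theta]$ with $[\mEdp,\theta]$ (both of total degree $2$) and equals $-1$ when exchanging $[\mEhp,[\mEdp,\theta]]$, respectively $[\mEdp,[\mEdp,\theta]]$, with $\theta$ (total degrees $3$ and $1$); these are exactly the signs needed for the displayed formulas, so once the exotic grading and the cyclicity are handled correctly there is no further obstacle — everything else is a line-by-line repetition of the bookkeeping already carried out in Lemmas~\ref{prep lemma 1 for third transgression} and~\ref{prep lemma 2 for third transgression}.
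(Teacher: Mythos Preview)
Your overall strategy is exactly the paper's: differentiate the same expression with $d^{X}$, use Bianchi, and separate by exotic degree. The paper differentiates $\tr\{g(\Rh;\theta)\cdot[\mEdp,\theta]\}$ while you differentiate $\tr\{g(\Rh;[\mEdp,\theta])\cdot\theta\}$; these are equal by the very cyclicity identity you quote, so there is no substantive difference in approach.

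There is, however, a genuine slip in your $\mc{G}^{0}$ bookkeeping. You write that ``in both, the second summand is zero by \eqref{third transgression.11.2}''. This is correct for the $\mc{G}^{2}$ component, where the second summand is $\tr\{g(\Rh;[\mEdp,\theta])\cdot[\mEdp,\theta]\}$. But in the $\mc{G}^{0}$ component the second summand is
\[
\tr\{g(\Rh;[\mEdp,\theta])\cdot[\mEhp,\theta]\},
\]
and \eqref{third transgression.11.2} says nothing about this term --- it does \emph{not} vanish. In fact this term is essential: by your cyclicity identity (both factors have total degree $2$, so the sign is $+1$) it equals $\tr\{g(\Rh;[\mEhp,\theta])\cdot[\mEdp,\theta]\}$, and after adding the genuinely zero term $\tr\{g(\Rh;[\mEdp,\theta])\cdot[\mEdp,\theta]\}$ from \eqref{third transgression.11.2} it becomes exactly the left-hand side $\tr\{g(\Rh;[\mEh,\theta])\cdot[\mEdp,\theta]\}$ of \eqref{third transgression.16}. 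If you drop it, your $\mc{G}^{0}$ identity reads $\del^{X}\tr\{g(\Rh;[\mEdp,\theta])\cdot\theta\}=-\tr\{g(\Rh;\theta)\cdot[\mEhp,[\mEdp,\theta]]\}$, which is not \eqref{third transgression.16} and is false in general. Your later remark about ``absorbing $[\mEhp,\theta]$ back into $[\mEh,\theta]$'' suggests you half-saw this, but as written the argument discards the very term that produces the left-hand side. Once you keep that summand and run the cyclicity/sign check you already outlined, the proof goes through exactly as in the paper.
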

	
	\begin{proof}
		We consider the following differential and compute it as in Lemma \ref{prep lemma 1 for third transgression}. Again we assume without loss of generality that $g(T) = T^n$.
		\begin{align*}\numberthis\label{third transgression.13}
			& d^X \sum_{i=1}^{n} \tr \{\Rh^{i-1}{\theta} \Rh^{n-i}[\mEdp,\theta] \} \\
			= & \sum_{i + j + k = n - 2 } \tr \{\Rh^{i} [\mEh,\Rh] \Rh^{j} {\theta} \Rh^{k} [\mEdp,\theta] \}
			+\sum_{i=1}^{n}\tr \{\Rh^{i-1}[\mEh,\theta]\Rh^{n-i}[\mEdp,\theta] \}\\	
			- & \sum_{i + j + k = n - 2} \tr \{\Rh^{i} {\theta} \Rh^{j} [\mEh,\Rh]\Rh^{k} [\mEdp,\theta] \}
			-\sum_{i=1}^{n} \tr \{\Rh^{i-1} \theta \Rh^{n-i} [\mEh,[\mEdp,\theta]] \}
		\end{align*}
		Again the terms in the first and third summations are zero by Bianchi identity. So we get  	 
		\begin{align*}\numberthis\label{third transgression.14}
			\tr \{g(\Rh;{[\mEh,\theta]})\cdot[\mEdp,\theta] \}  	    
			= & d^X\tr \{g(\Rh;{\theta})\cdot[\mEdp,\theta] \} \\
			&+ \tr\{g(\Rh;{\theta})\cdot[\mEh,[\mEdp,\theta]] \}
		\end{align*}
		Substitute \eqref{third transgression.14} into \eqref{third transgression.10} in Lemma \ref{prep lemma 2 for third transgression}, we have 	
		\begin{align*}\numberthis\label{third transgression.15}
		&d^{\mM}\tr \{g(\Rh)\cdot\theta \}+\tr \{ g(\Rh)\cdot\theta^2 \}\\
		=&\delbar^X \tr \{ g(\Rh; [\mEhp, \theta]) \cdot\theta \}
		- d^X \tr \{ g(\Rh; [\mEdp, \theta]) \cdot \theta \}\\
		-&\tr \{g(\Rh; \theta) \cdot [\mEh, [\mEdp, \theta]] \}
		\end{align*}
		Note again the left hand side is a $2$-form on $\mM$ with values in $\mc{G}^0$, we get the first equality by comparing $\mc{G}^0$ components and the second equality in by comparing $\mc{G}^{2}$ components.
	\end{proof}
	
	Combining the formulas we derived so far, we are ready to prove the following main theorem.
	\begin{theorem}\label{third transgression}
		Let $g(T)$ be a convergent power series in $T$, then the $\mM$-directional differential of $\tr \{g(\Rh)\cdot \theta \}$ is given by the following formula:  	    	
		\begin{equation}\label{third transgression.1}
		d^{\mM}\tr\{g(\Rh) \cdot \theta \}
		=\frac{1}{2}\delbar^X\tr\{g(\Rh;[\mEhp,\theta]) \cdot \theta \}
		-\frac{1}{2}\del^X\tr\{g(\Rh;[\mEdp,\theta]) \cdot \theta \}
		\end{equation}
	\end{theorem}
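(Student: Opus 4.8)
The plan is to symmetrise the two ``halves'' of the right-hand side, which is exactly what makes the coefficients $\frac{1}{2}$ appear; throughout I would take $g(T)=T^{n}$ and sum afterwards. Abbreviate $Q:=\tr\{g(\Rh;[\mEhp,\theta])\cdot[\mEdp,\theta]\}$, and note that $\tr\{g(\Rh;[\mEh,\theta])\cdot[\mEdp,\theta]\}=Q$, because the missing summand $\tr\{g(\Rh;[\mEdp,\theta])\cdot[\mEdp,\theta]\}$ vanishes by \eqref{third transgression.11.2}. With this notation, equation \eqref{third transgression.10} of Lemma \ref{prep lemma 2 for third transgression} reads
\[
d^{\mM}\tr\{g(\Rh)\cdot\theta\}+\tr\{g(\Rh)\cdot\theta^{2}\}=\delbar^{X}\tr\{g(\Rh;[\mEhp,\theta])\cdot\theta\}-Q ,
\]
so the theorem will follow as soon as I establish the identity
\[
Q=\frac{1}{2}\delbar^{X}\tr\{g(\Rh;[\mEhp,\theta])\cdot\theta\}+\frac{1}{2}\del^{X}\tr\{g(\Rh;[\mEdp,\theta])\cdot\theta\}-\tr\{g(\Rh)\cdot\theta^{2}\},
\]
since substituting this back cancels the two $\theta^{2}$-terms and produces exactly \eqref{third transgression.1}.

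To prove this identity for $Q$, I would first rewrite \eqref{third transgression.16} of Lemma \ref{prep lemma 3 for third transgression} with the abbreviation as
\[
Q=\del^{X}\tr\{g(\Rh;[\mEdp,\theta])\cdot\theta\}+\tr\{g(\Rh;\theta)\cdot[\mEhp,[\mEdp,\theta]]\},
\]
and then produce its mirror image. Running the computation of Lemma \ref{prep lemma 1 for third transgression} verbatim but starting from $d^{X}\sum_{i}\tr\{\Rh^{i-1}\theta\,\Rh^{n-i}[\mEhp,\theta]\}$ instead of the $[\mEdp,\theta]$-version (equivalently, applying the involution $*$, which interchanges $\mEhp$ with $\mEdp$ and $\del^{X}$ with $\delbar^{X}$), killing the curvature cross-terms by the Bianchi identity, using \eqref{third transgression.11}, and comparing $\mc{G}^{\bullet}$-components exactly as in that proof, yields
\[
Q=\delbar^{X}\tr\{g(\Rh;[\mEhp,\theta])\cdot\theta\}+\tr\{g(\Rh;\theta)\cdot[\mEdp,[\mEhp,\theta]]\}.
\]
Adding the two expressions for $Q$ gives
\[
2Q=\del^{X}\tr\{g(\Rh;[\mEdp,\theta])\cdot\theta\}+\delbar^{X}\tr\{g(\Rh;[\mEhp,\theta])\cdot\theta\}+\tr\{g(\Rh;\theta)\cdot\bigl([\mEhp,[\mEdp,\theta]]+[\mEdp,[\mEhp,\theta]]\bigr)\}.
\]

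It remains to evaluate the last trace. Since $\mEh=\mEhp+\mEdp$ with $(\mEhp)^{2}=0$ (Lemma \ref{curvature is degree 0}) and $(\mEdp)^{2}=0$ (flatness of the cohesive module), the cross-terms reassemble as $[\mEhp,[\mEdp,\theta]]+[\mEdp,[\mEhp,\theta]]=[\mEh,[\mEh,\theta]]=[\mEh\circ\mEh,\theta]=[\Rh,\theta]$. A short telescoping computation — using the identity $[\Rh,g(\Rh;\theta)]=[g(\Rh),\theta]$ and graded cyclicity of $\tr$ (with $\theta$ odd, the two resulting contributions each equal $-\tr\{g(\Rh)\cdot\theta^{2}\}$) — then gives $\tr\{g(\Rh;\theta)\cdot[\Rh,\theta]\}=-2\tr\{g(\Rh)\cdot\theta^{2}\}$. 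Hence $2Q=\del^{X}\tr\{g(\Rh;[\mEdp,\theta])\cdot\theta\}+\delbar^{X}\tr\{g(\Rh;[\mEhp,\theta])\cdot\theta\}-2\tr\{g(\Rh)\cdot\theta^{2}\}$, which is precisely the identity for $Q$ required above, and the theorem follows.

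I expect the real obstacle to be the sign bookkeeping in the middle step: establishing the mirror identity for $Q$ with the correct signs, whether one re-runs the Bianchi-identity argument of Lemma \ref{prep lemma 1 for third transgression} or deduces it via $*$ (in the latter route one must track $\theta^{*}=-\theta$, the exchange of $\mEhp$ and $\mEdp$ under $*$, and the behaviour of $*$ under the supertrace). The telescoping evaluation $\tr\{g(\Rh;\theta)\cdot[\Rh,\theta]\}=-2\tr\{g(\Rh)\cdot\theta^{2}\}$ is similarly sign-sensitive but otherwise routine, as is the passage from monomials to arbitrary convergent $g$.
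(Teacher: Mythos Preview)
Your argument is correct and arrives at the same endpoint as the paper, but the route is genuinely a bit different in its middle step. Both proofs start from the identity \eqref{third transgression.10} of Lemma~\ref{prep lemma 2 for third transgression} and both finish with the telescoping evaluation $\tr\{g(\Rh;\theta)\cdot[\Rh,\theta]\}=-2\tr\{g(\Rh)\cdot\theta^{2}\}$, which is exactly the computation \eqref{third transgression.26} in the paper. The divergence is in how the nested commutator is handled. The paper works asymmetrically: it rewrites $[\mEhp,[\mEdp,\theta]]=[\Rh,\theta]-[\mEh,[\mEhp,\theta]]=[\Rh,\theta]-d^{\mM}\Rh$ via Jacobi and Lemma~\ref{derivative of curvature}, then uses the cyclic identity \eqref{third transgression.23} together with the Maurer--Cartan equation to recognise $\tr\{g(\Rh;\theta)\cdot d^{\mM}\Rh\}$ as another copy of the left-hand side $d^{\mM}\tr\{g(\Rh)\cdot\theta\}+\tr\{g(\Rh)\cdot\theta^{2}\}$; this is what produces the doubling and hence the factor $\tfrac{1}{2}$. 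You instead symmetrise: you re-run the derivation of \eqref{third transgression.16} with $\mEhp$ and $\mEdp$ interchanged to obtain a second expression for $Q$, and averaging the two directly yields $[\mEhp,[\mEdp,\theta]]+[\mEdp,[\mEhp,\theta]]=[\Rh,\theta]$ without ever invoking $d^{\mM}\Rh$ or the Maurer--Cartan equation again. Your route is slightly cleaner conceptually (the $\tfrac{1}{2}$ visibly comes from averaging mirror formulas rather than from an a posteriori doubling) and uses one fewer ingredient; the paper's route stays closer to the universal-bundle viewpoint by keeping $d^{\mM}\Rh$ in play. Your caveat about sign bookkeeping in the mirror step is well placed but the signs do work out, since $\tr\{g(\Rh;A)\cdot B\}=\tr\{g(\Rh;B)\cdot A\}$ for even $A,B$ and $[\mEhp,[\mEhp,\theta]]=[\mEdp,[\mEdp,\theta]]=0$.
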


	\begin{proof}
		We write $\tr \{g(\Rh;\theta)[\mEh,[\mEdp,\theta]] \}$ as the sum of its $\mc{G}^0$ and $\mc{G}^{2}$ components  	
		\begin{equation}\label{third transgression.18}
		\tr\{g(\Rh;\theta)[\mEhp,[\mEdp,\theta]] \}
		+\tr\{g(\Rh;\theta)[\mEdp,[\mEdp,\theta]] \}
		\end{equation}
		then by Jacobi identity and flatness of $\mb{E}^{\dpr}$, we have 	
		\begin{equation}\label{third transgression.19}
		[\mEdp,[\mEdp,\theta]] = \frac{1}{2} [[\mEdp, \mEdp], \theta] = 0
		\end{equation}
		Similarly, we compute
		\begin{equation}\label{third transgression.20}
		[\mEhp,[\mEdp,\theta]]=[[\mEhp,\mEdp],\theta]-[\mEdp,[\mEhp,\theta]]
		\end{equation}
		By equation \eqref{third transgression.11}, we can add $0 = [\mEhp,[\mEhp,\theta]]$ to the above equation and note that $\mc{R}_h = [\mEhp, \mEdp]$,  we have
		\begin{equation}\label{third transgression.21}
		[\mEhp,[\mEdp,\theta]]= [\Rh,\theta]-[\mEh,[\mEhp,\theta]]
		\end{equation}
		By lemma \ref{derivative of curvature}, we have $d^{\mM}(\mc{R}_h)=[\mEh,[\mEhp,\theta]]$ and therefore we can rewrite the above formula as
		\begin{equation}
		[\mEh, [\mEdp, \theta]] = [\mEhp, [\mEdp, \theta]] 
		= [\Rh, \theta] - d^{\mM}\Rh
		\end{equation}
		so we have
		\begin{align*}\numberthis\label{third transgression.22}
		\tr\{g(\Rh;\theta)[\mEh,[\mEdp,\theta]] \}
		= \tr\{g(\Rh;\theta)[\Rh,\theta] \}
		- \tr\{g(\Rh;\theta)d^{\mM}\Rh \}
		\end{align*}
		By the property of $\tr$ and $\mc{R}_h$ has even total degree, we have
		\begin{align*}\numberthis\label{third transgression.23}
		\tr\{g(\Rh;\theta)d^{\mM}\Rh \}
		=&\sum_{i=1}^{n}\tr\{\Rh^{i-1}\theta\Rh^{n-i}d^{\mM}\Rh \}\\
		=&\sum_{i=1}^{n}\tr\{\theta\Rh^{n-i}d^{\mM}\Rh\Rh^{i-1} \}\\
		=&\tr\{\theta \cdot g(\Rh;d^{\mM}\Rh) \}
		\end{align*}
		Using this equation, we can rewrite the last term in equation \eqref{third transgression.16} in Lemma \ref{prep lemma 3 for third transgression} as:
		\begin{align*}
		&\tr\{g(\Rh;\theta)[\mEhp,[\mEdp, \theta]] \} \\
		=&\tr\{g(\Rh;\theta) \cdot [\Rh, \theta] \}
		-\tr\{g(\Rh;\theta) \cdot d^{\mM}\Rh  \} \\
		=&\tr\{g(\Rh;\theta) \cdot [\Rh, \theta]  \} +
		\tr \{g(\Rh;d^{\mM}\Rh) \cdot \theta \} \\ 
		\numberthis\label{third transgression.24}
		=&\tr \{ g(\Rh; \theta) \cdot [\Rh,\theta] \} + d^{\mM}\tr \{g(\Rh) \cdot \theta \}
		 +\tr\{g(\Rh)\theta^2 \}
		\end{align*}
		where the last equality follows from Maurer-Cartan equation for $\theta$.
		Finally we plug the equation \eqref{third transgression.24} into equation \eqref{third transgression.15}, after collecting terms, we get
		\begin{align*}\numberthis\label{third transgression.25}
		2d^{\mM} \tr \{g(\Rh) \cdot  \theta\} + 2\tr \{g(\Rh) \cdot \theta^2 \}
		=& \delbar^X \tr \{g(\Rh; [\mEhp, \theta]) \cdot \theta \} \\
		&- \del^X \tr \{g (\Rh; [\mEdp, \theta]) \cdot \theta \} \\
		&- \tr \{g(\Rh; \theta) \cdot [\Rh, \theta] \}
		\end{align*}
		We expand the last term $\tr\{g(\Rh; \theta)[\Rh, \theta]\}$ explicitly to get
		\begin{align*}
		\tr \{g(\Rh;\theta) \cdot [\Rh,\theta] \}
		&=\sum_{i=1}^{n} \tr \{\Rh^{i-1} \theta \Rh^{n-i} \Rh \theta \} 
		- \sum_{i=1}^{n} \tr \{\Rh^{i-1} \theta \Rh^{n-i} \theta \Rh \}\\
		&=\sum_{i=1}^{n} \tr \{\Rh^{i-1} \theta \Rh^{n-i+1} \theta \} 
		-\sum_{i=1}^{n} \tr \{\Rh^{i} \theta \Rh^{n-i} \theta \} \\
		&=\tr(\theta\Rh^n\theta)-\tr(\Rh^n\theta^2) = -2\tr \{\Rh^n\theta^2 \} \\ \numberthis\label{third transgression.26}
		&= -2\tr \{g(\mc{R}_h)\cdot \theta^2 \}
		\end{align*}
		Plug equation \eqref{third transgression.26} into \eqref{third transgression.25}, we have
		\begin{align*}\numberthis\label{third transgression.27}
		2 d^{\mM} \tr \{g(\Rh) \cdot \theta \}
		= \delbar^X\tr \{g(\Rh;[\mEhp,\theta]) \cdot \theta \} 
		-\del^X \tr \{g(\Rh;[\mEdp,\theta]) \cdot \theta \}
		\end{align*}
		Divide both sides by 2, we get the desired formula.
	\end{proof}
	
	We are now ready to define secondary Bott-Chern classes for cohesive modules with Hermitian structures. Recall $\mc{G}=\mc{G}^0$ is the space of $(p,p)$ forms, let $\mc{G}^{\pr}$ be the subspace of $\mc{G}$ defined by $\mr{Im}\del^X+\mr{Im}\delbar^X$. We will define the secondary classes as elements in ${\mc{G}^0}/{\mc{G}^{\pr}\cap \mc{G}^0}$.
	
	\begin{definition}
		Assume $k_1, k_2$ be two Hermitian metrics on a cohesive module $E$. For a convergent power series $f(T)$, we define the secondary Bott-Chern form $\tilde{f}(k_1, k_2)$ associated to $k_1, k_2$ as an element in ${\mc{G}^0}/{\mc{G}^{\pr}\cap \mc{G}^0}$ with a representatives $\tilde{f}(k_1, k_2; \gamma)$ in $\mc{G}^0$ given by:
		\begin{equation}
		\tilde{f}(k_1, k_2; \gamma) = \int_{\gamma}\tr\{f^{\pr}(\mc{R}_h)\cdot \theta \}d\gamma
		\end{equation}
		where $\gamma(t)$ is a curve on $\mc{M}$ that connects $k_1$ to $k_2$. The following proposition shows that this is well-defined.
	\end{definition}
	
	\begin{proposition}
		The equivalence class of $\tilde{f}(k_1, k_2; \gamma)$ in ${\mc{G}^0}/{\mc{G}^{\pr}\cap \mc{G}^0}$ is independent of $\gamma$.
	\end{proposition}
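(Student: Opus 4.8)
The plan is to combine the convexity of $\mM$ with the Donaldson transgression formula (Theorem~\ref{third transgression}), applied to the power series $f^{\pr}(T)$, which converges wherever $f$ does. It suffices to fix two smooth curves $\gamma_0,\gamma_1\colon[0,1]\to\mM$ with $\gamma_0(0)=\gamma_1(0)=k_1$ and $\gamma_0(1)=\gamma_1(1)=k_2$ and to prove that
\[
\tilde{f}(k_1,k_2;\gamma_1)-\tilde{f}(k_1,k_2;\gamma_0)\in\mc{G}^{\pr}\cap\mc{G}^0.
\]
A convex combination of positive definite, block-diagonal Hermitian metrics is again one, so $\mM$ is convex, and $\Gamma(s,t)=(1-s)\gamma_0(t)+s\gamma_1(t)$ defines a smooth map $[0,1]^2\to\mM$ with $\Gamma(0,\cdot)=\gamma_0$, $\Gamma(1,\cdot)=\gamma_1$, while $s\mapsto\Gamma(s,0)$ and $s\mapsto\Gamma(s,1)$ are the constant maps at $k_1$ and $k_2$.

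Write $\alpha=\tr\{f^{\pr}(\Rh)\cdot\theta\}$, a smooth $1$-form in the $\mM$-direction with coefficients in $\mc{A}^{\bullet}(X)$, so that $\tilde{f}(k_1,k_2;\gamma_i)=\int_{\gamma_i}\alpha$. First I would apply Stokes' theorem to $\Gamma^{*}\alpha$ on the square $[0,1]^2$. Along the edges $\{t=0\}$ and $\{t=1\}$ the map $s\mapsto\Gamma(s,t)$ is constant, so the pullback of the $1$-form $\alpha$ vanishes there; along $\{s=0\}$ and $\{s=1\}$ it restricts to $\gamma_0^{*}\alpha$ and $\gamma_1^{*}\alpha$. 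Therefore
\[
\int_{\gamma_1}\alpha-\int_{\gamma_0}\alpha=\int_{[0,1]^2}d\bigl(\Gamma^{*}\alpha\bigr)=\int_{[0,1]^2}\Gamma^{*}\bigl(d^{\mM}\alpha\bigr).
\]

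Next I would feed in Theorem~\ref{third transgression} with $g=f^{\pr}$. Abbreviating
\[
\beta_1=\tr\{f^{\pr}(\Rh;[\mEhp,\theta])\cdot\theta\},\qquad\beta_2=\tr\{f^{\pr}(\Rh;[\mEdp,\theta])\cdot\theta\},
\]
it says $d^{\mM}\alpha=\frac{1}{2}\delbar^X\beta_1-\frac{1}{2}\del^X\beta_2$. Since $\del^X$ and $\delbar^X$ differentiate only in the $X$-directions they commute with the pullback $\Gamma^{*}$ and with integration over the square, so
\[
\int_{\gamma_1}\alpha-\int_{\gamma_0}\alpha=\frac{1}{2}\delbar^X\!\int_{[0,1]^2}\!\Gamma^{*}\beta_1-\frac{1}{2}\del^X\!\int_{[0,1]^2}\!\Gamma^{*}\beta_2\in\mr{Im}\,\delbar^X+\mr{Im}\,\del^X.
\]
The left-hand side also lies in $\mc{G}^0$, hence in $\mc{G}^{\pr}\cap\mc{G}^0$, as desired. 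Conceptually this is just the statement that, after passing to the quotient $\mc{G}^0/(\mc{G}^{\pr}\cap\mc{G}^0)$, the form $\alpha$ becomes a $d^{\mM}$-closed $1$-form on the contractible manifold $\mM$, so its line integral depends only on the endpoints.

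The computations here are routine; the two points that need attention are the exotic-degree bookkeeping and the interchange of $\del^X,\delbar^X$ with the integration over the homotopy square. For the first, one checks as in Section~3 that $[\mEhp,\theta]\in\mc{G}^{-1}$ and $[\mEdp,\theta]\in\mc{G}^1$, so $\beta_1\in\mc{G}^{-1}$, $\beta_2\in\mc{G}^1$, and the right-hand side above really does land in $\mc{G}^0$, matching the left-hand side. The second is legitimate because $\del^X$ and $\delbar^X$ act fibrewise over $X$ and are insensitive to the coordinates $(s,t)$ of the square. The only substantive ingredient is Theorem~\ref{third transgression}; everything else is a Poincar\'e-lemma argument on the convex space $\mM$.
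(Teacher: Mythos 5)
Your argument is correct and is essentially the paper's own proof: both use convexity of $\mc{M}$ to bound the difference of the two path integrals by an integral of $d^{\mM}\tr\{f^{\pr}(\mc{R}_h)\cdot\theta\}$ over a $2$-chain via Stokes, and then invoke Theorem~\ref{third transgression} to place that integrand (hence the integral, since $\del^X$ and $\delbar^X$ commute with integration over the parameter domain) in $\mr{Im}\,\del^X+\mr{Im}\,\delbar^X$. The only differences are cosmetic: you spell out the affine homotopy square and the exotic-degree check, where the paper uses a $2$-simplex and leaves those details implicit.
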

	
	\begin{proof}
		Let $\tau$ be a another path connecting $k_1$ to $k_2$, then by convexity of $\mc{M}$, the loop $\eta = \gamma-\tau$ is the boundary of a smooth 2-simplex $\sigma$ in $\mM$. By Stokes formula, we have 
		\begin{equation}
		\tilde{f}(k_1,k_2; \gamma)-\tilde{f}(k_1, k_2; \tau) 
		= \int_{\del{\sigma}} \tr \{f^{\pr}(\mc{R}_h) \cdot \theta \} d\eta
		= \int_{\sigma}d^{\mM}\tr \{f^{\pr}(\mc{R}_h) \cdot \theta \} d\sigma
		\end{equation}
		By theorem \ref{third transgression}, the integrand is an element in $\mc{G}^{\pr}$, the result follows.
	\end{proof}  
	
	\begin{corollary} 
		The primary characteristic form is related to the secondary form via the following equation:
		\begin{equation}
			f(E, h_2)-f(E, h_1) = -\delbar^X\del^X \tilde{f}(h_1, h_2;\gamma)
		\end{equation}
		for any path $\gamma$ that connects $h_1$ to $h_2$. 
	\end{corollary}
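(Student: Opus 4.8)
The plan is to obtain this identity simply by integrating the Bott--Chern double transgression formula (the Corollary labelled ``Bott--Chern formula'') along the path $\gamma$. That formula asserts that for every $h\in\mM$ one has $d^{\mM}\tr f(\mc{R}_h)=\del^X\delbar^X\tr\{f^{\pr}(\mc{R}_h)\cdot\theta\}$, an equality of $1$-forms in the $\mM$-direction with values in $(p,p)$-forms on $X$. First I would parametrise $\gamma$ as $\gamma\colon[0,1]\to\mM$ with $\gamma(0)=h_1$ and $\gamma(1)=h_2$, pull the formula back along $\gamma$ to an equality of $1$-forms on $[0,1]$ with values in forms on $X$, and then integrate over $[0,1]$.

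On the left-hand side, $\tr f(\mc{R}_h)$ is a $0$-form in the $\mM$-direction valued in forms on $X$, so the fundamental theorem of calculus (Stokes on the interval) gives $\int_{\gamma}d^{\mM}\tr f(\mc{R}_h)=\tr f(\mc{R}_{h_2})-\tr f(\mc{R}_{h_1})=f(E,h_2)-f(E,h_1)$. On the right-hand side, $\del^X$ and $\delbar^X$ differentiate only in the $X$-directions, hence commute with $\gamma^{*}$ and with $\int_{0}^{1}dt$; pulling them outside the integral yields $\int_{\gamma}\del^X\delbar^X\tr\{f^{\pr}(\mc{R}_h)\cdot\theta\}=\del^X\delbar^X\int_{\gamma}\tr\{f^{\pr}(\mc{R}_h)\cdot\theta\}\,d\gamma=\del^X\delbar^X\tilde{f}(h_1,h_2;\gamma)$. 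Since $\del^X$ and $\delbar^X$ anticommute, $\del^X\delbar^X=-\delbar^X\del^X$, which produces the stated sign and finishes the proof. I would also remark that $\del^X\delbar^X$ annihilates $\mr{Im}\,\del^X+\mr{Im}\,\delbar^X=\mc{G}^{\pr}$, so the right-hand side depends only on the class of $\tilde{f}(h_1,h_2)$ in $\mc{G}^0/(\mc{G}^{\pr}\cap\mc{G}^0)$, which is consistent with the fact that the secondary form is only well defined modulo $\mc{G}^{\pr}\cap\mc{G}^0$.

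The only point that needs any care is the interchange of the fibrewise operators $\del^X,\delbar^X$ with integration along $\gamma$ in the $\mM$-direction, together with the bookkeeping of the Koszul signs coming from the triple $\mb{Z}$-grading; this is essentially formal once one observes that the $X$- and $\mM$-directions are independent, so I do not expect a genuine obstacle here. If one prefers an even more explicit route, the same computation can be phrased by writing $\gamma^{*}\theta=h_t^{-1}\dot{h}_t\,dt$ for $h_t=\gamma(t)$ and applying the Bott--Chern formula pointwise in $t$ before integrating.
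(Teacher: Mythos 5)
Your proposal is correct and is essentially the paper's own argument: integrate the Bott--Chern formula $d^{\mM}\tr f(\mc{R}_h)=\del^X\delbar^X\tr\{f^{\pr}(\mc{R}_h)\cdot\theta\}$ along $\gamma$, apply the fundamental theorem of calculus on the left, pull $\del^X\delbar^X$ outside the $\mM$-integral on the right, and use $\del^X\delbar^X=-\delbar^X\del^X$ to obtain the sign. No gaps.
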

	
	\begin{proof}
		By the Bott-Chern formula  \eqref{Bott Chern}, we have
		\begin{align}
		f(E, h_2)-f(E, h_1)
		& = \int_{\gamma}d^{\mM}\tr f(\Rh)d\gamma\\
		& = - \int_{\gamma} \delbar^X \del^X \tr \{f^{\pr}(\Rh) \cdot \theta \}d\gamma 
		\end{align}
		the result follows from the definition of $\tilde{f}(h_1, h_2;\gamma)$.
	\end{proof}
	
	\section{Invariance of Bott-Chern classes under quasi-isomorphism}
	\subsection{Invariance under gauge transformation}
	In this section, we study the Bott-Chern forms under deformation of the cohesive structures. Unlike the situation of Hermitian metrics, the characteristic classes will depend on the cohesive structures. In fact, this is why we want to refine the characteristic classes to take value in Bott-Chern cohomology since their image in deRham cohomology only depend on the underlying topological complex vector bundle structure and therefore are independent of the cohesive structure.
	
	\begin{definition}
		For a $\mb{Z}$-graded Hermitian vector bundle $(E^{\bullet}, h)$, define $\mc{E}$ to be the space of $\delbar^X$-superconnections of total degree 1 and define $\mc{E}^{\dpr}$ to be subspace of $\mc{E}$ whose elements satisfy the flatness condition.
	\end{definition}
	
	\begin{definition}
		Recall the subspaces $\mc{G}^{\bullet}$ defined by exotic degrees in section 2. For each $k \in \mb{Z}$, we define $\mc{G}^{\dpr, k}$ to be the subspace of $\mc{G}^{k}$ whose elements are forms of type $(0, \bullet)$. Similarly we define $\mc{G}^{\pr, k}$ to be the subspace of $\mc{G}^k$ whose elements are forms of type $(\bullet, 0)$.  
	\end{definition}
	
	\begin{example}
		If $E^{\bullet}$ is concentrated in degree 0 and we set $\mc{A}$ to be the space of $\delbar^X$-connections and $\mc{A}^{\dpr}$ to be the subspace of $\delbar^X$-flat connections, then by Koszul-Malgrange theorem, $\mc{A}^{\dpr}$ is the space of holomorphic structures on $E$. If the underlying complex manifold $X$ is a Riemann surface, then every $\delbar^X$-connection is automatically flat and $\mc{A}^{\dpr} = \mc{A}$ is an affine space modeled on $\mc{A}^{0, 1}(X, \End E)$.
	\end{example}

	We assume $\mc{E}^{\dpr}$ is non-empty and it has a marked point $\mb{E}^{\dpr}$ such that $E = (E^{\bullet}, \mb{E}^{\dpr}, h)$ is a Hermitian cohesive module. The following lemma is a straightforward consequence of the explicit construction of Chern superconnections in Proposition \ref{Chern superconnection existence}.
	\begin{lemma}
		$\mc{E}$ is an affine space modeled on $\mc{G}^{\dpr, 1}$. $\mc{E}^{\dpr}$ is the subspace whose elements $\alpha^{\dpr} \in \mc{G}^{\dpr, 1}$ satisfy the Maurer-Cartan equation:
		\begin{equation}
		\mb{E}^{\dpr}(\alpha^{\dpr}) + \frac{1}{2}[\alpha^{\dpr}, \alpha^{\dpr}] = 0
		\end{equation}
		The $*$ operation interchanges $\mc{G}^{\dpr, \bullet}$ and $\mc{G}^{\pr, -\bullet}$. If $\mb{E}$ is the Chern superconnection of the Hermitian cohesive module $E$, then the space of Chern superconnection is in one-to-one correspondence with $\mc{E}^{\dpr}$. The correspondence is given by:
		\begin{equation}
		\mb{F} \xrightarrow{\delbar^X- \mr{ component}} \mb{F}^{\dpr}
		\end{equation}
		whose inverse is given by:
		\begin{equation}
		\mb{E} + \alpha^{\pr} + \alpha^{\dpr} \xleftarrow{\alpha^{\pr} = (\alpha^{\dpr})^*} \mb{E}^{\dpr} + \alpha^{\dpr}
		\end{equation}
	\end{lemma}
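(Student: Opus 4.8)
The plan is to deduce all four assertions from the structural results of Section~2, with only the grading identification and the $*$-adjoint sign bookkeeping requiring genuine attention. First I would establish the affine structure of $\mc{E}$: any two $\delbar^X$-superconnections of total degree $1$ on $(E^{\bullet},h)$ differ by an $\mc{A}^{0,\bullet}(X)$-linear operator of total degree $1$, since the $\delbar^X$-Leibniz terms in \eqref{Leibiniz fromula of delbar superconnection} cancel, and conversely adding such a tensorial operator to a $\delbar^X$-superconnection yields another one. To see that the model space is exactly $\mc{G}^{\dpr,1}$, I would note that an element of $\mc{A}^{0,q}(X,\End^{d}E^{\bullet})$ raises total degree by $q+d$ and, since $p=0$, has exotic degree $q+d$; hence, on forms of type $(0,\bullet)$, ``total degree $1$'' and ``exotic degree $1$'' coincide, which is precisely the defining condition of $\mc{G}^{\dpr,1}$. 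A $\delbar^X$-superconnection is determined by its restriction to $\mc{A}^{0}(X,E^{\bullet})$, so there is no extension ambiguity.

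For the Maurer--Cartan description of $\mc{E}^{\dpr}$, I would write $\mb{F}^{\dpr}=\mb{E}^{\dpr}+\alpha^{\dpr}$ with $\alpha^{\dpr}\in\mc{G}^{\dpr,1}$ and expand $(\mb{F}^{\dpr})^{2}$: because $\mb{E}^{\dpr}$ and $\alpha^{\dpr}$ are both odd, the two cross terms combine into the graded commutator $[\mb{E}^{\dpr},\alpha^{\dpr}]=\mb{E}^{\dpr}(\alpha^{\dpr})$ (the induced $\delbar^X$-superconnection on $\End^{\bullet}E$ applied to $\alpha^{\dpr}$), the square $(\alpha^{\dpr})^{2}$ equals $\tfrac12[\alpha^{\dpr},\alpha^{\dpr}]$, and $(\mb{E}^{\dpr})^{2}=0$ by hypothesis, giving $(\mb{F}^{\dpr})^{2}=\mb{E}^{\dpr}(\alpha^{\dpr})+\tfrac12[\alpha^{\dpr},\alpha^{\dpr}]$; flatness of $\mb{F}^{\dpr}$ is therefore equivalent to the stated equation. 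The statement about $*$ is immediate: by Lemma~\ref{linear terms in Chern connection} the operator $*$ sends $\mc{A}^{p,q}(X,\End^{d}E)$ to $\mc{A}^{q,p}(X,\End^{-d}E)$, so it interchanges type $(0,\bullet)$ with type $(\bullet,0)$; combined with the fact that $*$ carries $\mc{G}^{k}$ to $\mc{G}^{-k}$, this gives $*\colon\mc{G}^{\dpr,k}\xrightarrow{\ \sim\ }\mc{G}^{\pr,-k}$, and it is an involution by Lemma~\ref{properties of star on End valued forms}.

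For the bijection with $\mc{E}^{\dpr}$: given a Chern superconnection $\mb{F}$ on $(E^{\bullet},h)$, its $\delbar^X$-component $\mb{F}^{\dpr}$ is a flat $\delbar^X$-superconnection of total degree $1$, hence lies in $\mc{E}^{\dpr}$, so the forward map is well defined. For the inverse, given $\mb{E}^{\dpr}+\alpha^{\dpr}\in\mc{E}^{\dpr}$ I would set $\alpha^{\pr}=(\alpha^{\dpr})^{*}$, which lies in $\mc{G}^{\pr,-1}$ by the previous paragraph, and form $\mb{F}:=\mb{E}+\alpha^{\pr}+\alpha^{\dpr}$; adding tensorial terms keeps this a $d^X$-superconnection, and its $\delbar^X$-component is $\mb{E}^{\dpr}+\alpha^{\dpr}$ because $\alpha^{\pr}$ is of type $(\bullet,0)$. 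The tensorial term $B:=\alpha^{\dpr}+(\alpha^{\dpr})^{*}$ satisfies $B=B^{*}$, so by Lemma~\ref{linear terms in Chern connection} (extended by linearity) $h(Bs,t)=h(s,Bt)$; substituting $\mb{E}+B$ into \eqref{unitary superconnection} and using that $\mb{E}$ already satisfies it, the extra $B$-terms cancel, so $\mb{F}$ is unitary, i.e.\ it is the Chern superconnection of $(E^{\bullet},\mb{E}^{\dpr}+\alpha^{\dpr},h)$. The two assignments are mutually inverse: one composite is the identity on $\mc{E}^{\dpr}$ by construction, and the other is the identity on Chern superconnections by the uniqueness clause of Proposition~\ref{Chern superconnection existence}.

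I expect the only real friction to be (i) the bookkeeping identifying the exotic grading with the total degree in the affine-structure step, and (ii) getting the signs right in the unitarity check for $\mb{E}+B$ against the $(-1)^{\norm{s}}$ convention of \eqref{unitary superconnection}; both are short once set up, and the remainder is a direct assembly of Lemmas~\ref{properties of star on End valued forms} and \ref{linear terms in Chern connection} together with Proposition~\ref{Chern superconnection existence}.
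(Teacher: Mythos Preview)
Your proposal is correct and is precisely the kind of argument the paper has in mind: the paper does not write out a proof at all, merely stating before the lemma that it ``is a straightforward consequence of the explicit construction of Chern superconnections in Proposition~\ref{Chern superconnection existence}.'' Your four-step verification (affine structure via cancellation of Leibniz terms and the total/exotic degree coincidence on $(0,\bullet)$-forms, the Maurer--Cartan expansion of $(\mb{E}^{\dpr}+\alpha^{\dpr})^2$, the $*$-interchange from Lemma~\ref{linear terms in Chern connection}, and the bijection via $B=\alpha^{\dpr}+(\alpha^{\dpr})^{*}$ plus uniqueness in Proposition~\ref{Chern superconnection existence}) is exactly the unpacking of that sentence, and your identified friction points (the grading match and the sign in the unitarity check) are the only places where care is needed.
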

	
	\begin{definition}
		By the above lemma, the tangent space $T_{\mb{E}^{\dpr}}\mc{E}^{\dpr}$ of $\mc{E}^{\dpr}$ at a point $\mb{E}^{\dpr}$ can be identified with the space of solutions in $\mc{G}^{\dpr, 1}$ to the equation:
		\begin{equation}
		\mb{E}^{\dpr}(\alpha^{\dpr}) = 0
		\end{equation}
		In particular, the tangent space  is a subspace in $\mc{G}^{\dpr, 1}$. We define a one form $\delta^{\dpr}$ on $\mc{E}^{\dpr}$ with values in $\mc{G}^{\dpr, 1}$ by the formula:
		\begin{equation}
		\delta^{\dpr}(\alpha^{\dpr}) = \alpha^{\dpr}, \ \forall \alpha^{\dpr} \in T_{\mb{E}^{\dpr}}\mc{E}^{\dpr}
		\end{equation}  
		By duality, we also define the one form $\gamma^{\pr}$ on $\mc{E}^{\dpr}$ with values in $\mc{G}^{\pr, -1}$ by the formula:
		\begin{equation}\label{Chern superconnection correspondence with flat structure}
		\delta^{\pr}(\alpha^{\dpr}) = (\alpha^{\dpr})^* = \alpha^{\pr}
		\end{equation}
		Finally, we set $\delta = \delta^{\pr} + \delta^{\dpr}$ to be an one form with values in $\mc{G}^{\dpr, 1}\oplus \mc{G}^{\pr, -1}$.
	\end{definition}
	
	\begin{lemma}
		If $\mb{E}_t$ is a family of Chern superconnections such that $\mb{E}^{\dpr}|_{t = 0} = \mb{E}^{\dpr}$ and $\mb{E}^{\dpr}_t = \mb{E}^{\dpr} + A^{\dpr}_t$, then the deformation of the curvatures $\mc{R}_t$ is given by:
		\begin{equation}
			\frac{d}{dt}\mc{R}_t = [\mb{E}_t, \frac{d}{dt}\mb{E}_t]
		\end{equation}
		where $\mb{E}_t t$ is the Chern superconnection of $(E^{\bullet}, \mb{E}^{\dpr}_t, h)$. Using the one form $\delta$ defined above, we can rewrite this formulas as \begin{equation}
			d^{\mc{E}}\mc{R} = -[\mb{E}, \delta]
		\end{equation} 
		
	\end{lemma}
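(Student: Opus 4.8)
The plan is to reduce the statement to the elementary fact that the difference of two $\del^{X}$-superconnections attached to the \emph{same} Hermitian metric is an honest $\mc{A}^{\bullet}(X)$-linear operator, so that the curvature depends quadratically on that difference.

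First I would reparametrise the family. Since $h$ is held fixed, the preceding lemma recovers $\mb{E}_t$ from $\mb{E}^{\dpr}_t=\mb{E}^{\dpr}+A^{\dpr}_t$ via $\mb{E}_t=\mb{E}+(A^{\dpr}_t)^{*}+A^{\dpr}_t$, so that $B_t:=\mb{E}_t-\mb{E}=(A^{\dpr}_t)^{*}+A^{\dpr}_t$ lies in $\mc{A}^{\bullet,0}(X,\End^{\bullet}E)\oplus\mc{A}^{0,\bullet}(X,\End^{\bullet}E)$. It acts on $\mc{A}^{\bullet}(X,E^{\bullet})$ by graded composition, hence is $\mc{A}^{\bullet}(X)$-linear, and it has odd total degree because $\mb{E}_t$ and $\mb{E}$ are superconnections of total degree $1$. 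Then I would expand
\[
\mc{R}_t=\mb{E}_t\circ\mb{E}_t=(\mb{E}+B_t)^{2}=\mc{R}+[\mb{E},B_t]+B_t\circ B_t
\]
in the graded sense, which is legitimate precisely because $B_t$ is linear, so no extra $\delbar^{X}$ or $\del^{X}$ terms are generated. Differentiating in $t$ and using that $B_t$ is odd, so $\tfrac{d}{dt}(B_t\circ B_t)=[\tfrac{d}{dt}B_t,B_t]$, gives
\[
\tfrac{d}{dt}\mc{R}_t=[\mb{E},\tfrac{d}{dt}B_t]+[B_t,\tfrac{d}{dt}B_t]=[\mb{E}+B_t,\tfrac{d}{dt}B_t]=[\mb{E}_t,\tfrac{d}{dt}\mb{E}_t],
\]
which is the first displayed identity, since $\tfrac{d}{dt}B_t=\tfrac{d}{dt}\mb{E}_t$.

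For the reformulation I would pass to the combined complex $\Omega^{\bullet}(\mc{E}^{\dpr})\,\widehat{\otimes}\,\mc{A}^{\bullet}(X,\End^{\bullet}E)$ of sections over $\mc{E}^{\dpr}$ and regard $\mb{E}$ and $\mc{R}=\mb{E}\circ\mb{E}$ as living there. By the correspondence lemma together with the definition of $\delta$, moving the base point of $\mc{E}^{\dpr}$ along a tangent vector $\alpha^{\dpr}$ moves $\mb{E}$ by $\alpha^{\pr}+\alpha^{\dpr}=\delta(\alpha^{\dpr})$; in other words $d^{\mc{E}}\mb{E}=\delta$ tautologically, and contracting this with the velocity of the curve $\mb{E}^{\dpr}_t$ returns $\tfrac{d}{dt}\mb{E}_t$. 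Applying the graded Leibniz rule for $d^{\mc{E}}$ to $\mc{R}=\mb{E}\circ\mb{E}$ and using that $\mb{E}$ is odd,
\[
d^{\mc{E}}\mc{R}=(d^{\mc{E}}\mb{E})\circ\mb{E}-\mb{E}\circ(d^{\mc{E}}\mb{E})=\delta\circ\mb{E}-\mb{E}\circ\delta=-[\mb{E},\delta],
\]
the last step because $\delta$ has even total degree in the combined complex. Contracting this identity with $\tfrac{d}{dt}$ along $\mb{E}^{\dpr}_t$ reproduces the previous step, so the two formulas say the same thing.

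There is no genuine obstacle here; the only point requiring care is the sign bookkeeping — checking that $B_t$ is truly $\mc{A}^{\bullet}(X)$-linear so that $(\mb{E}+B_t)^{2}$ expands as a quadratic polynomial, that all operators in sight are odd, and that the Koszul sign in $d^{\mc{E}}(\mb{E}\circ\mb{E})$ is exactly what produces the minus sign in $d^{\mc{E}}\mc{R}=-[\mb{E},\delta]$.
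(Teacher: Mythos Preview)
Your argument is correct, but it is not the route the paper takes. The paper exploits the Chern structure explicitly: it writes $\mc{R}_t=[\mb{E}^{\dpr}_t,\mb{E}^{\pr}_t]$ (using Lemma~\ref{curvature is degree 0}), differentiates to get $[\dot{\mb{E}}^{\dpr}_t,\mb{E}^{\pr}_t]+[\mb{E}^{\dpr}_t,\dot{\mb{E}}^{\pr}_t]$, and then adds the two vanishing terms $[\mb{E}^{\dpr}_t,\dot{\mb{E}}^{\dpr}_t]=0$ and $[\mb{E}^{\pr}_t,\dot{\mb{E}}^{\pr}_t]=0$ coming from flatness of $\mb{E}^{\dpr}_t$ and its dual to assemble $[\mb{E}_t,\dot{\mb{E}}_t]$. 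Your approach instead expands $\mc{R}_t=(\mb{E}+B_t)^2$ around a fixed base point and differentiates the quadratic directly; this is more elementary and in fact proves the identity for \emph{any} smooth family of superconnections, not just Chern ones, since you never invoke $(\mb{E}^{\dpr}_t)^2=0$ or $(\mb{E}^{\pr}_t)^2=0$. The paper's route has the advantage of making visible the separate $(\pr)$ and $(\dpr)$ contributions, which is what is used immediately afterwards in Proposition~\ref{first transgression over cohesive} when comparing $\mc{G}^{\bullet}$ components; your route gets to the formula faster but would require reintroducing that splitting in the next step.
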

	\begin{proof}
		We write ${\alpha}_t = {\alpha}^{\pr}_t + {\alpha}^{\dpr}_t$ for the tangent vectors $\frac{d}{dt}\mE_t$ for short. For each $t$, ${\alpha}^{\dpr}_t$ satisfies:
		\begin{equation}
		[\mb{E}^{\dpr}_t, {\alpha}^{\dpr}_t] = 0
		\end{equation} and by duality, ${\alpha}^{\pr}_t$ satisfies
		\begin{equation}
		[\mb{E}^{\pr}_t, {\alpha}^{\pr}_t] = 0
		\end{equation} 
		By definition, $\mc{R}_t = [\mb{E}^{\dpr}_t, \mb{E}^{\pr}_t]$, so if we take its derivative, we have
		\begin{equation}
		\frac{d}{dt}\mc{R}_t = [{\alpha}^{\dpr}_t, \mb{E}^{\dpr}_t] + [{\alpha}^{\pr}_t, \mb{E}^{\pr}_t]
		\end{equation}
		Adding up the above equations, we get the desired equality.
	\end{proof}
	
	\begin{proposition}\label{first transgression over cohesive}
		Let $f(T)$ be a convergent power series in $T$, the $\mc{E}$-directional differential of the Bott-Chern characteristic form $f(E, \mb{E}^{\dpr}, h)$ at $\mb{E}^{\dpr} \in \mc{E}^{\dpr}$ is given by:
		\begin{equation}\label{1st transgression over moduli}
			d^{\mc{E}}f(\mc{R}) = - \tr f^{\pr}(\mc{R}\cdot [\mb{E}, \delta]) = -d^X \tr f^{\pr}(\mc{R} \cdot \delta)
		\end{equation} 
	\end{proposition}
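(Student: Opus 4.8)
Proof proposal.

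The plan is to mirror the proof of the First Transgression Formula (Proposition \ref{first transgression}) but working over the moduli space $\mc{E}$ of flat $\delbar^X$-superconnections instead of the moduli space $\mc{M}$ of Hermitian metrics. First I would consider the total space $\mc{E}^{\dpr} \times X$ carrying the tautological family of Hermitian cohesive modules: over a point $\mb{E}^{\dpr} \in \mc{E}^{\dpr}$ the fiber cohesive structure is $\mb{E}^{\dpr}$ itself with the fixed metric $h$, and along $\mc{E}^{\dpr}$ the bundle is flat with flat exterior derivative $d^{\mc{E}}$. The relevant superconnection on this total space is $\widetilde{\mb{E}} = d^{\mc{E}} + \mb{E}$, where $\mb{E}$ denotes the family of Chern superconnections; its curvature is $\widetilde{\mc{R}} = d^{\mc{E}}\mb{E} + \mc{R}$, exactly as in equation \eqref{universal curvature}, because $(d^{\mc{E}})^2 = 0$. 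By Proposition \ref{characteristic forms are closed} applied on $\mc{E}^{\dpr}\times X$, the form $\tr f(\widetilde{\mc{R}})$ is $d^{\mc{E}\times X}$-closed.

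Next I would expand $\tr f(\widetilde{\mc{R}}) = \tr f(\mc{R} + d^{\mc{E}}\mb{E})$ by multilinearity and collect the part that is of degree exactly one in the $\mc{E}$-directions, just as in equations \eqref{transgression.3}–\eqref{transgression.4}. Setting the degree-one part of $d^{\mc{E}\times X}\tr f(\widetilde{\mc{R}}) = 0$ equal to zero gives
\begin{equation}
	d^{\mc{E}}\tr f(\mc{R}) + d^X \tr\{ f^{\pr}(\mc{R}) \cdot d^{\mc{E}}\mb{E}\} = 0 ,
\end{equation}
where the cyclic invariance of $\tr$ has been used to collapse the sum over insertion positions. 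By the preceding lemma, $d^{\mc{E}}\mb{E} = \delta$ and $d^{\mc{E}}\mc{R} = -[\mb{E}, \delta]$; substituting $d^{\mc{E}}\mb{E} = \delta$ yields $d^{\mc{E}}\tr f(\mc{R}) = -d^X \tr\{f^{\pr}(\mc{R})\cdot \delta\}$. Finally, to obtain the first equality in \eqref{1st transgression over moduli}, I would rewrite $d^X$ under the supertrace as the commutator with $\mb{E}$ — that is, $d^X\tr\{f^{\pr}(\mc{R})\cdot\delta\} = \tr[\mb{E}, f^{\pr}(\mc{R})\cdot\delta]$ — expand by the graded Jacobi identity into $\tr\{[\mb{E}, f^{\pr}(\mc{R})]\cdot\delta\} + \tr\{f^{\pr}(\mc{R})\cdot[\mb{E},\delta]\}$, and observe that the first term vanishes by the Bianchi identity $[\mb{E}, \mc{R}] = 0$ (Lemma \ref{Bianchi}), so that $d^X\tr\{f^{\pr}(\mc{R})\cdot\delta\} = \tr\{f^{\pr}(\mc{R})\cdot[\mb{E},\delta]\}$, exactly the argument used in the proof of Proposition \ref{double transgression}.

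The step I expect to require the most care is the bookkeeping of signs and the justification that the family constructions behave as claimed: namely, that $\widetilde{\mb{E}} = d^{\mc{E}} + \mb{E}$ is genuinely a superconnection on $\mc{E}^{\dpr}\times X$ with the stated curvature, and that the Chern-Weil identity $d^{\mc{E}\times X}\tr f(\widetilde{\mc{R}}) = 0$ applies verbatim in this setting even though $\mc{E}^{\dpr}$ is infinite-dimensional (which is handled exactly as for $\mc{M}$ in Section 3). One should also check that $d^{\mc{E}}\mb{E}$, which a priori could have components of several exotic degrees, is correctly identified with the one-form $\delta = \delta^{\pr} + \delta^{\dpr}$ valued in $\mc{G}^{\pr,-1}\oplus\mc{G}^{\dpr,1}$; this is precisely the content of the lemma immediately preceding the proposition, so no new difficulty arises there. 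Everything else is a formal transcription of the $\mc{M}$-directional computation, with $\theta$ replaced by $\delta$ and the Maurer–Cartan relation no longer needed at this stage.
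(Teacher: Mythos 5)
Your proposal is correct and uses the same ingredients as the paper --- the identification $d^{\mc{E}}\mb{E} = \delta$ (equivalently $d^{\mc{E}}\mc{R} = -[\mb{E},\delta]$ from the preceding lemma) together with the Bianchi identity --- merely deriving the two equalities of \eqref{1st transgression over moduli} in the opposite order, by running the universal superconnection $d^{\mc{E}} + \mb{E}$ on $\mc{E}^{\dpr}\times X$ exactly as in Proposition \ref{first transgression}. The paper's own proof is the terser version of the same argument: the first equality is the chain rule applied to $d^{\mc{E}}\mc{R} = -[\mb{E},\delta]$, and the second follows from the Bianchi identity as in Proposition \ref{double transgression}.
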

	\begin{proof}
		The first equality is a direct consequence of previous lemma. The second equality follows from Bianchi identity $[\mb{E}, \mc{R}] = 0$ as before.
	\end{proof}
	
	\begin{corollary}
		We compare the $\mc{G}^{\bullet}$ components of both sides of the equality \eqref{1st transgression over moduli}. We get the following identities:
		\begin{align}
			\label{1st transgression eq1}
			& d^{\mc{E}}\tr f(\mc{R}) 
			  = -\del^X\tr \{ f^{\pr}(\mc{R})\cdot \delta^{\dpr} \} 
			  - \delbar^X \tr \{f^{\pr}(\mc{R})\cdot \delta^{\pr} \} \\
			\label{1st transgression eq2}
			& \del^X \tr \{ f^{\pr}(\mc{R})\cdot \delta^{\pr}\} = 0 \\
			\label{1st transgression eq3}
			& \delbar^X \tr \{f^{\pr}(\mc{R})\cdot \delta^{\dpr}\} = 0
		\end{align}
	\end{corollary}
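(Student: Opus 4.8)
The plan is to take equation \eqref{1st transgression over moduli} from the preceding proposition and decompose both sides according to the exotic grading $\mc{G}^{\bullet}$, exactly as was done for the First Transgression Formula (Proposition \ref{first transgression}) in the metric-deformation setting. First I would recall the bidegree bookkeeping: by Lemma \ref{curvature is degree 0} the curvature $\mc{R}$ lies in $\mc{G}^0$, so $f^{\pr}(\mc{R}) \in \mc{G}^0$; the one-form $\delta^{\dpr}$ takes values in $\mc{G}^{\dpr,1}$, hence $f^{\pr}(\mc{R})\cdot\delta^{\dpr} \in \mc{G}^1$, and dually $f^{\pr}(\mc{R})\cdot\delta^{\pr} \in \mc{G}^{-1}$, so that $\tr f^{\pr}(\mc{R}\cdot\delta)$ splits as a sum of a $\mc{G}^1$ piece and a $\mc{G}^{-1}$ piece. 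On the other side, $d^{\mc{E}}\tr f(\mc{R})$ is a one-form on $\mc{E}$ with values in $\mc{G}^0$, since differentiating along $\mc{E}$ does not change the exotic degree of the form-part on $X$.

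Next I would apply $d^X = \del^X + \delbar^X$ to $\tr f^{\pr}(\mc{R}\cdot\delta) = \tr\{f^{\pr}(\mc{R})\cdot\delta^{\pr}\} + \tr\{f^{\pr}(\mc{R})\cdot\delta^{\dpr}\}$ and track exotic degrees: $\del^X$ shifts $\mc{G}^k \to \mc{G}^{k-1}$ while $\delbar^X$ shifts $\mc{G}^k \to \mc{G}^{k+1}$. Thus $d^X \tr\{f^{\pr}(\mc{R})\cdot\delta^{\dpr}\}$ contributes a $\mc{G}^0$ term $\del^X\tr\{f^{\pr}(\mc{R})\cdot\delta^{\dpr}\}$ and a $\mc{G}^2$ term $\delbar^X\tr\{f^{\pr}(\mc{R})\cdot\delta^{\dpr}\}$, while $d^X\tr\{f^{\pr}(\mc{R})\cdot\delta^{\pr}\}$ contributes a $\mc{G}^{-2}$ term $\del^X\tr\{f^{\pr}(\mc{R})\cdot\delta^{\pr}\}$ and a $\mc{G}^0$ term $\delbar^X\tr\{f^{\pr}(\mc{R})\cdot\delta^{\pr}\}$. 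Matching the $\mc{G}^0$ components on the two sides of $d^{\mc{E}}\tr f(\mc{R}) = -d^X\tr\{f^{\pr}(\mc{R})\cdot\delta\}$ yields \eqref{1st transgression eq1}; matching the $\mc{G}^{-2}$ component (which is forced to vanish since the left side has no $\mc{G}^{-2}$ part) yields \eqref{1st transgression eq2}; and matching the $\mc{G}^2$ component yields \eqref{1st transgression eq3}. Alternatively, \eqref{1st transgression eq3} follows from \eqref{1st transgression eq2} by applying the $*$-involution, which interchanges $\mc{G}^{\pr,\bullet}$ and $\mc{G}^{\dpr,-\bullet}$ and swaps $\del^X$ with $\delbar^X$, using that $\mc{R}^* = \mc{R}$ and $(\delta^{\pr})^* = \delta^{\dpr}$.

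There is essentially no analytic obstacle here; the whole argument is a degree-counting bookkeeping exercise once Proposition \ref{first transgression over cohesive} is in hand. The one point that deserves care—and which I would verify explicitly rather than assert—is that the splitting $\delta = \delta^{\pr} + \delta^{\dpr}$ really is a splitting by exotic bidegree of the $X$-form part, i.e.\ that $\delta^{\dpr}$ contributes only to positive exotic degree and $\delta^{\pr}$ only to negative, so that no cross-terms land in the wrong graded piece. This is exactly the content of the earlier lemma stating $\mc{E}$ is affine over $\mc{G}^{\dpr,1}$ and that $*$ interchanges $\mc{G}^{\dpr,\bullet}$ with $\mc{G}^{\pr,-\bullet}$, so I would simply cite that. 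The only mild subtlety is that $f^{\pr}(\mc{R})\cdot\delta$ should be read as the formal substitution $f^{\pr}(\mc{R};\,\cdot\,)$ notation is not needed here since $\delta$ is a one-form coefficient multiplying the scalar-in-$\End$ series $f^{\pr}(\mc{R})$, so the product stays in a single exotic degree determined by $\delta$'s component; once that is noted the comparison of graded components is immediate.
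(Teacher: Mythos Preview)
Your proposal is correct and follows exactly the approach the paper indicates: the corollary is stated without a detailed proof beyond the instruction to ``compare the $\mc{G}^{\bullet}$ components of both sides,'' and you have carried out precisely that comparison, with the correct exotic-degree bookkeeping ($\mc{R}\in\mc{G}^0$, $\delta^{\dpr}\in\mc{G}^{1}$, $\delta^{\pr}\in\mc{G}^{-1}$, and $\del^X$, $\delbar^X$ shifting by $-1$, $+1$ respectively). Your observation that \eqref{1st transgression eq3} also follows from \eqref{1st transgression eq2} via the $*$-involution is a nice extra remark not made explicit in the paper.
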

	
	Equation \eqref{1st transgression eq1} is the first transgression formula over $\mc{E}^{\dpr}$. Motivated by the results we established in previous section, equation \eqref{1st transgression eq2} and \eqref{1st transgression eq3} are expected to admit a further transgression. However this is not the case in general for otherwise we would have proved $\tr f(\mc{R})$ is even independent of the cohesive structure. We will show the terms $\tr (f^{\pr}(\mc{R})\cdot \delta^{\pr})$ and $\tr (f^{\pr}(\mc{R})\cdot \delta^{\dpr})$ admit a further transgression formula when restricted to certain subspaces in $\mc{E}^{\dpr}$.
	
	\begin{definition}
		We define the generalized Dolbeault complex associated to a cohesive module $E$ by $(\mc{G}^{\dpr, \bullet}, \mb{E}^{\dpr})$. By the flatness condition on $\mb{E}^{\dpr}$, it defines a differential. The cohomology of the complex is defined to be the Dolbeault cohomology of the cohesive module $E$.
	\end{definition}
	
	\begin{remark}
		The Euler characteristic $\chi(E)$ of the Dolbeault cohomology group of a cohesive module $E$ is studied in a different paper \cite{Hua16b}. By Hodge theorey, $\chi(E)$ is given by the index of a Dirac-type operator $D^E$ defined by $\sqrt{2}(\mE^{\dpr} + (\mE^{\dpr})^*)$ where $(\mE^{\dpr})^*$ is the formal adjoint of $\mE^{\dpr}$ with respect to a Hermitian metric $h_E$ on $E$ and a Hermitian metric $h_X$ on $X$. It is proved that the index is computed by the classical Atiyah-Singer index formula:
		\begin{equation}
			\mr{Ind}(D^E) = (2\pi i)^{-\dim X}\int_X \mr{Todd}(X) \cdot \mr{ch}(E)
		\end{equation}
		where $\mr{Todd}(X)$ is the Todd genus and $\mr{ch}(E)$ is the Chern character form associated to the power series $\exp(-T)$.
	\end{remark}
	
	\begin{definition}
		Let $\mb{E}^{\dpr}_t$ be a one parameter family of cohesive structures. The tangent vectors $\alpha^{\dpr}_t = \dot{\mE}^{\dpr}_t$ satisfies $\mb{E}^{\dpr}_t({\alpha}^{\dpr}_t) = 0$ so they are pointwise closed with respect to the generalized Dolbeault operator $\mb{E}^{\dpr}_t$. We say the family is exact if there exist a smooth section $\gamma^{\dpr}_t$ with values in $\mc{G}^{\dpr, 0}$ such that 
		\begin{equation}
			[\mE^{\dpr}_t, \gamma^{\dpr}_t] = \delta^{\dpr}_t, \forall t
		\end{equation}
		That is to say, the tangent vectors $\alpha^{\dpr}_t$ defines zero cohomology class in the first Dolbeault cohomology groups and in addition, we can find a smooth lift of them. 
	\end{definition}
	
	\begin{example}
		Consider the group $\mr{GL}(E)$ whose elements are of the form $f = \sum_{k = 0} ^{\dim X} f_k$ where $f_k \in \mc{A}^{0, k}(X, \End^{-k}E)$ and $f_0$ is invertible. Since $\mc{A}^{\bullet > 0}$ is nilpotent and we required $f_0$ to be invertible, $\mr{GL}(E)$ forms a group. $\mr{GL}(E)$ acts on $\mc{E}$ and preserves the subspace $\mc{E}^{\dpr}$ via gauge transformation:
		\begin{equation}
			(\mE^{\dpr})^f = f^{-1} \circ \mE^{\dpr} \circ f = f^{-1}\circ [\mb{E}^{\dpr}, f] + \mb{E}^{\dpr}
		\end{equation}
		If $f_t$ is a one parameter family of gauge group elements in $\mr{GL}(E)$ such that $f_0 = \mr{Id}_E$, then for any choice of $\mb{E}^{\dpr} \in \mc{E}^{\dpr}$, we claim  the one parameter family of cohesive structures $\mE^{\dpr}_t = (\mb{E}^{\dpr})^{f_t}$ is exact.
		
		To see this, we simply take $\gamma^{\dpr}_t = f_t^{-1} \frac{d}{dt} f_t$ with values in $\mc{G}^{\dpr, 1}$ and we can compute
		\begin{align}
			&\frac{d}{dt}\mb{E}^{\dpr}_t 
			 = \frac{d}{dt}(f_t^{-1}\circ [\mb{E}^{\dpr}, f_t] + \mb{E}^{\dpr}) \\
			& = - \gamma^{\dpr}_t \circ f_t^{-1} \circ [\mE^{\dpr}, f_t] 
			+ f_t^{-1} \circ [\mE^{\dpr}, f_t \circ \gamma^{\dpr}_t] \\
			& = - \gamma^{\dpr}_t \circ \mb{E}^{\dpr}_t +  \gamma^{\dpr}_t \circ\mE^{\dpr} + \mE^{\dpr}_t \circ \gamma^{\dpr}_t -  \gamma^{\dpr}_t \circ \mE^{\dpr} \\
			& = [\mE^{\dpr}_t, \gamma^{\dpr}_t] 
		\end{align}
		This shows that the family $\mb{E}^{\dpr}_t$ obtained by applying a family of gauge transformations is exact.
	\end{example}
	
	\begin{remark}
		If $E$ is again just a holomorphic vector bundle concentrated in degree $0$, then $\mr{GL}(E)$ is just the group of invertible linear automorphisms. Two holomorphic vector bundle structure on $E$ are equivalent if and only if they differ by a gauge transformation.
	\end{remark}
	
	\begin{definition}
		If $\mc{S}$ is a submanifold of $\mc{E}^{\dpr}$ for which the restriction $\delta^{\dpr}$ is exact and admits a smooth lift $\gamma^{\dpr} \in \mc{G}^{\dpr, 0}$ such that $[\mb{E}^{\dpr}, \gamma^{\dpr}] = \delta^{\dpr}$ , we say $\mc{S}$ is exact.
	\end{definition}

	\begin{proposition}\label{second transgression over cohesive}
		If $\mc{S}$ is exact with a lift $\gamma^{\dpr}$ of $\delta^{\dpr}$ and we set $\gamma^{\pr} =  (\gamma^{\dpr})^*$, then we have the following identities:
		\begin{align}
		\label{1st transgression eq4}
		& \delbar^X \tr \{ f^{\pr}(\mc{R}) \cdot \gamma^{\dpr} \} = \tr \{ f^{\pr}(\mc{R}) \cdot \delta^{\dpr} \} \\
		\label{1st transgression eq5}
		& \del^X \tr \{f^{\pr}(\mc{R}) \cdot \gamma^{\pr} \} = - \tr \{ f^{\pr}(\mc{R}) \cdot \delta^{\pr} \}
		\end{align}
	\end{proposition}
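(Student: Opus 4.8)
The plan is to mimic, essentially line for line, the proof of the Second Transgression formula (Proposition~\ref{double transgression}), with the Maurer--Cartan form $\theta$ there replaced by the primitive $\gamma^{\dpr}$ and the bracket decomposition there replaced by the exactness relation $[\mb{E}^{\dpr},\gamma^{\dpr}]=\delta^{\dpr}$. First I would compute, for the Chern superconnection $\mb{E}=\mb{E}^{\pr}+\mb{E}^{\dpr}$ of $(E^{\bullet},\mb{E}^{\dpr},h)$,
\begin{align*}
d^X\tr\{f^{\pr}(\mc{R})\cdot\gamma^{\dpr}\}
&=\tr[\mb{E},f^{\pr}(\mc{R})\cdot\gamma^{\dpr}]\\
&=\tr\{[\mb{E},f^{\pr}(\mc{R})]\cdot\gamma^{\dpr}\}+\tr\{f^{\pr}(\mc{R})\cdot[\mb{E},\gamma^{\dpr}]\},
\end{align*}
using that $d^X$ passes through $\tr$ as the commutator with $\mb{E}$ together with the graded Jacobi identity, just as in equation~\eqref{2nd transgression.1}. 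By the Bianchi identity (Lemma~\ref{Bianchi}) the term $[\mb{E},f^{\pr}(\mc{R})]$ is a sum of expressions each containing the factor $[\mb{E},\mc{R}]=0$, so the first summand drops out.

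Next I would split $[\mb{E},\gamma^{\dpr}]=[\mb{E}^{\pr},\gamma^{\dpr}]+[\mb{E}^{\dpr},\gamma^{\dpr}]$ and insert the exactness relation $[\mb{E}^{\dpr},\gamma^{\dpr}]=\delta^{\dpr}$, obtaining
\begin{equation*}
d^X\tr\{f^{\pr}(\mc{R})\cdot\gamma^{\dpr}\}
=\tr\{f^{\pr}(\mc{R})\cdot[\mb{E}^{\pr},\gamma^{\dpr}]\}+\tr\{f^{\pr}(\mc{R})\cdot\delta^{\dpr}\}.
\end{equation*}
The last step is the exotic-degree bookkeeping. Since $\gamma^{\dpr}\in\mc{G}^{\dpr, 0}$ and $\mc{R}\in\mc{G}^0$ (Lemma~\ref{curvature is degree 0}), the form $\tr\{f^{\pr}(\mc{R})\cdot\gamma^{\dpr}\}$ lies in $\mc{G}^0$ and is of type $(0,\bullet)$, so its $\delbar^X$-part has exotic degree $+1$ and its $\del^X$-part has exotic degree $-1$. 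On the right, $[\mb{E}^{\pr},\gamma^{\dpr}]\in\mc{G}^{-1}$ because $\mb{E}^{\pr}$ has exotic degree $-1$, whereas $\delta^{\dpr}\in\mc{G}^{\dpr, 1}\subseteq\mc{G}^{1}$. Comparing the $\mc{G}^{1}$ components gives precisely $\delbar^X\tr\{f^{\pr}(\mc{R})\cdot\gamma^{\dpr}\}=\tr\{f^{\pr}(\mc{R})\cdot\delta^{\dpr}\}$, which is~\eqref{1st transgression eq4}; comparing the $\mc{G}^{-1}$ components gives the companion identity $\del^X\tr\{f^{\pr}(\mc{R})\cdot\gamma^{\dpr}\}=\tr\{f^{\pr}(\mc{R})\cdot[\mb{E}^{\pr},\gamma^{\dpr}]\}$, which is not needed here.

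For~\eqref{1st transgression eq5} I would rerun the same computation with $\gamma^{\pr}=(\gamma^{\dpr})^{*}$ in place of $\gamma^{\dpr}$. The only extra input is the dual exactness relation $[\mb{E}^{\pr},\gamma^{\pr}]=-\delta^{\pr}$, which I would obtain by applying $*$ to $[\mb{E}^{\dpr},\gamma^{\dpr}]=\delta^{\dpr}$, using $(\mb{E}^{\dpr})^{*}=\mb{E}^{\pr}$ (i.e.\ that the Chern superconnection satisfies $\mb{E}^{*}=\mb{E}$, immediate from the construction in Proposition~\ref{Chern superconnection existence}), the identity $(\delta^{\dpr})^{*}=\delta^{\pr}$ from~\eqref{Chern superconnection correspondence with flat structure}, and the graded-commutator rule $[A,B]^{*}=-(-1)^{\norm{A}\norm{B}}[A^{*},B^{*}]$. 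Then as above
\begin{equation*}
d^X\tr\{f^{\pr}(\mc{R})\cdot\gamma^{\pr}\}
=\tr\{f^{\pr}(\mc{R})\cdot[\mb{E}^{\pr},\gamma^{\pr}]\}+\tr\{f^{\pr}(\mc{R})\cdot[\mb{E}^{\dpr},\gamma^{\pr}]\},
\end{equation*}
and since $\gamma^{\pr}\in\mc{G}^{\pr, 0}$ is of type $(\bullet,0)$, the $\del^X$-part of the left side has exotic degree $-1$ and the $\delbar^X$-part exotic degree $+1$, while $[\mb{E}^{\pr},\gamma^{\pr}]\in\mc{G}^{-1}$ and $[\mb{E}^{\dpr},\gamma^{\pr}]\in\mc{G}^{1}$. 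Matching the $\mc{G}^{-1}$ components and substituting $[\mb{E}^{\pr},\gamma^{\pr}]=-\delta^{\pr}$ yields~\eqref{1st transgression eq5}. (Alternatively one could deduce~\eqref{1st transgression eq5} by applying $*$ directly to~\eqref{1st transgression eq4}, but then one has to track how $*$ interacts with $\tr$ and with the coefficients of $f$, so the route above seems cleaner.)

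I expect the only point requiring real care to be the exotic-degree bookkeeping — verifying that $\tr\{f^{\pr}(\mc{R})\cdot\gamma^{\dpr}\}$ sits purely in $\mc{G}^0$ and that the two terms it produces on the right land in $\mc{G}^{1}$ and $\mc{G}^{-1}$ with no overlap, so that the extraction of the $\del^X$- and $\delbar^X$-components is unambiguous — together with the sign in $[\mb{E}^{\pr},\gamma^{\pr}]=-\delta^{\pr}$, which rests on $\mb{E}^{*}=\mb{E}$ and on the $*$-rule for graded commutators. Both are mechanical once the conventions of Section~2 are fixed; everything else is a verbatim repetition of the manipulations already carried out in Section~3.
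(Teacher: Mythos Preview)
Your proposal is correct and follows essentially the same route as the paper: compute $d^X\tr\{f^{\pr}(\mc{R})\cdot\gamma^{\dpr}\}=\tr\{f^{\pr}(\mc{R})\cdot[\mE,\gamma^{\dpr}]\}$ via Bianchi, split $[\mE,\gamma^{\dpr}]$ into its $\mE^{\pr}$ and $\mE^{\dpr}$ parts, insert the exactness relation, and separate by exotic degree; then repeat with $\gamma^{\pr}$ using the adjoint relation $[\mE^{\pr},\gamma^{\pr}]=-\delta^{\pr}$. Your write-up simply makes explicit the exotic-degree bookkeeping and the sign computation for the adjoint, which the paper leaves implicit (and your identification of the relevant component as $\mc{G}^{1}$ rather than the paper's stated $\mc{G}^{2}$ is the correct one).
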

	\begin{proof}
		By Bianchi identity, we have
		\begin{equation}
			d^X \tr \{f^{\pr}(\mc{R}) \cdot \gamma^{\dpr} \} = \tr \{f^{\pr}(\mc{R}) \cdot [\mE, \gamma^{\dpr}] \}
		\end{equation}
		We compare the $\mc{G}^2$ component of the equation and we get the first equation \eqref{1st transgression eq4}. If we replace $\gamma^{\dpr}$ by $\gamma^{\pr} = (\gamma^{\dpr})^*$, it satisfies 
		\begin{equation}
			[\mE^{\pr}, \gamma^{\pr}] = - \delta^{\pr}
		\end{equation}	
		by taking adjoints of $[\mb{E}^{\dpr}, \gamma^{\dpr}] = \delta^{\dpr}$. The same argument as above shows the second equation \eqref{1st transgression eq5}.
	\end{proof}
	
	Combining Proposition \ref{first transgression over cohesive} and Proposition \ref{second transgression over cohesive}, we have the following double transgression formula for Bott-Chern forms over an exact submanifold.
	\begin{proposition}\label{double transgression cohesive}
		With the same assumption as above, if we set $\gamma = \gamma^{\pr} + \gamma^{\dpr} \in \mc{G}^{0}$, the $\mc{S}$-directional differential of Bott-Chern forms is given by:
		\begin{equation}\label{double transgression cohesive eq}
		d^{\mc{S}}\tr f(\mc{R}) = -\del^X\delbar^X\tr \{f^{\pr}(\mc{R}) \cdot \gamma \}
		\end{equation}
	\end{proposition}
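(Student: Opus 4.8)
The plan is to derive \eqref{double transgression cohesive eq} by simply chaining together the two transgression results just proved and matching up bidegrees via the exotic grading. First I would start from Proposition \ref{first transgression over cohesive}, whose refined consequences \eqref{1st transgression eq1}, \eqref{1st transgression eq2}, \eqref{1st transgression eq3} express $d^{\mc{E}}\tr f(\mc{R})$ restricted to $\mc{S}$ as
\[
d^{\mc{S}}\tr f(\mc{R}) = -\del^X\tr \{ f^{\pr}(\mc{R})\cdot \delta^{\dpr} \} - \delbar^X \tr \{ f^{\pr}(\mc{R})\cdot \delta^{\pr} \}.
\]
Here one uses that the inclusion $\mc{S}\hookrightarrow \mc{E}^{\dpr}$ pulls back $\delta^{\dpr}, \delta^{\pr}$ to the corresponding one-forms on $\mc{S}$, so the identity continues to hold verbatim on $\mc{S}$.

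Next I would invoke Proposition \ref{second transgression over cohesive}, which is available precisely because $\mc{S}$ is exact: it gives $\tr \{ f^{\pr}(\mc{R})\cdot \delta^{\dpr} \} = \delbar^X \tr \{ f^{\pr}(\mc{R})\cdot \gamma^{\dpr} \}$ and $\tr \{ f^{\pr}(\mc{R})\cdot \delta^{\pr} \} = -\del^X \tr \{ f^{\pr}(\mc{R})\cdot \gamma^{\pr} \}$. Substituting these two into the displayed formula above yields
\[
d^{\mc{S}}\tr f(\mc{R}) = -\del^X\delbar^X\tr \{ f^{\pr}(\mc{R})\cdot \gamma^{\dpr} \} - \delbar^X\del^X\tr \{ f^{\pr}(\mc{R})\cdot \gamma^{\pr} \}.
\]
Since $\del^X$ and $\delbar^X$ anticommute, $-\delbar^X\del^X = \del^X\delbar^X$, so the second term is $\del^X\delbar^X\tr \{ f^{\pr}(\mc{R})\cdot \gamma^{\pr} \}$... wait, let me be careful with signs: $-\delbar^X\del^X\tr\{f^{\pr}(\mc{R})\cdot\gamma^{\pr}\} = \del^X\delbar^X\tr\{f^{\pr}(\mc{R})\cdot\gamma^{\pr}\}$ only up to the sign coming from anticommutation, which I would track explicitly. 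The key observation making everything combine cleanly is that $\del^X\delbar^X \tr\{f^{\pr}(\mc{R})\cdot \gamma^{\pr}\}$ vanishes for bidegree reasons — $\gamma^{\pr} \in \mc{G}^{\pr,-1}$ consists of forms of type $(\bullet,0)$, so $\tr\{f^{\pr}(\mc{R})\cdot\gamma^{\pr}\}$ has no $(0,*)$ component and applying $\delbar^X$ first... actually the cleaner route is to recognize that the full $\gamma = \gamma^{\pr} + \gamma^{\dpr}$ satisfies $\tr\{f^{\pr}(\mc{R})\cdot\gamma\} \in \mc{G}^0$ and that $\del^X\delbar^X$ applied to it picks out exactly the cross terms $\del^X$ acting on the $\delbar^X$-part and vice versa; expanding $\del^X\delbar^X\tr\{f^{\pr}(\mc{R})\cdot(\gamma^{\pr}+\gamma^{\dpr})\}$ and discarding the terms that vanish by type (e.g. $\del^X$ of a $(\bullet,0)$-form landing outside, or double application) should reproduce exactly $\del^X\delbar^X\tr\{f^{\pr}(\mc{R})\cdot\gamma^{\dpr}\}$ together with the $\gamma^{\pr}$-contribution rewritten via anticommutativity. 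So the proof reduces to: (i) cite the two propositions, (ii) substitute, (iii) use $\del^X\delbar^X = -\delbar^X\del^X$ and the fact that $\del^X\delbar^X\tr\{f^{\pr}(\mc{R})\cdot\gamma^{\pr}\}$ and $\delbar^X\del^X\tr\{f^{\pr}(\mc{R})\cdot\gamma^{\dpr}\}$ each vanish by type considerations (a $(\bullet,0)$-form is killed by a second holomorphic-type derivative after the anti-holomorphic one, etc.), leaving $d^{\mc{S}}\tr f(\mc{R}) = -\del^X\delbar^X\tr\{f^{\pr}(\mc{R})\cdot\gamma\}$.

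The main obstacle I anticipate is bookkeeping the signs and the bidegree/exotic-degree vanishing precisely: one must be sure that replacing $\gamma^{\dpr}$ by the full $\gamma$ in the final formula is legitimate, i.e. that the extra term $\del^X\delbar^X\tr\{f^{\pr}(\mc{R})\cdot\gamma^{\pr}\}$ either vanishes or exactly matches $-\delbar^X\del^X\tr\{f^{\pr}(\mc{R})\cdot\gamma^{\pr}\}$ from the substitution. Because $\gamma^{\pr}$ is of type $(\bullet,0)$ and $\gamma^{\dpr}$ of type $(0,\bullet)$, one of the two iterated derivatives annihilates each cross-contribution for purely type reasons — this is the same mechanism used throughout Section 3 when comparing $\mc{G}^{\bullet}$ components — and once that is checked the identity falls out. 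I would present this as a short computation rather than a long one, since all the analytic content is already contained in Propositions \ref{first transgression over cohesive} and \ref{second transgression over cohesive}.

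\begin{proof}
Restricting \eqref{1st transgression eq1} to the submanifold $\mc{S}$ (the one-forms $\delta^{\dpr}, \delta^{\pr}$ pull back to $\mc{S}$ and the identity persists), we have
\begin{equation}\label{dtc.1}
	d^{\mc{S}}\tr f(\mc{R}) = -\del^X\tr \{ f^{\pr}(\mc{R}) \cdot \delta^{\dpr} \} - \delbar^X\tr \{ f^{\pr}(\mc{R}) \cdot \delta^{\pr} \}
\end{equation}
Since $\mc{S}$ is exact, Proposition \ref{second transgression over cohesive} applies and gives
\begin{equation}\label{dtc.2}
	\tr \{ f^{\pr}(\mc{R}) \cdot \delta^{\dpr} \} = \delbar^X\tr \{ f^{\pr}(\mc{R}) \cdot \gamma^{\dpr} \}, \qquad
	\tr \{ f^{\pr}(\mc{R}) \cdot \delta^{\pr} \} = -\del^X\tr \{ f^{\pr}(\mc{R}) \cdot \gamma^{\pr} \}
\end{equation}
Substituting \eqref{dtc.2} into \eqref{dtc.1} yields
\begin{equation}\label{dtc.3}
	d^{\mc{S}}\tr f(\mc{R}) = -\del^X\delbar^X\tr \{ f^{\pr}(\mc{R}) \cdot \gamma^{\dpr} \} + \delbar^X\del^X\tr \{ f^{\pr}(\mc{R}) \cdot \gamma^{\pr} \}
\end{equation}
Now $\tr\{f^{\pr}(\mc{R}) \cdot \gamma^{\pr}\} \in \mc{G}^0$ is a form of type $(\bullet, 0)$ only in its $\gamma^{\pr}$-slot, but since $f^{\pr}(\mc{R}) \cdot \gamma^{\pr} \in \mc{G}^{-1}$ and $\mc{R} \in \mc{G}^0$, the product has type $(p+1, p)$ components; applying $\del^X$ first followed by $\delbar^X$, the class $\del^X\delbar^X\tr\{f^{\pr}(\mc{R})\cdot\gamma^{\pr}\}$ agrees with $-\delbar^X\del^X\tr\{f^{\pr}(\mc{R})\cdot\gamma^{\pr}\}$ by anticommutativity of $\del^X$ and $\delbar^X$. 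Hence
\begin{equation}\label{dtc.4}
	\delbar^X\del^X\tr \{ f^{\pr}(\mc{R}) \cdot \gamma^{\pr} \} = -\del^X\delbar^X\tr \{ f^{\pr}(\mc{R}) \cdot \gamma^{\pr} \}
\end{equation}
Moreover, by the same type/exotic-degree comparison used in the proofs of Propositions \ref{first transgression} and \ref{double transgression}, $\tr\{f^{\pr}(\mc{R})\cdot\gamma^{\dpr}\}$ and $\tr\{f^{\pr}(\mc{R})\cdot\gamma^{\pr}\}$ are the $\mc{G}^1$ and $\mc{G}^{-1}$ components respectively of $\tr\{f^{\pr}(\mc{R})\cdot\gamma\}$, and the remaining components of $\tr\{f^{\pr}(\mc{R})\cdot\gamma\}$ are annihilated after applying $\del^X\delbar^X$ and projecting onto $\mc{G}^0$. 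Combining \eqref{dtc.3} and \eqref{dtc.4} we obtain
\begin{equation}
	d^{\mc{S}}\tr f(\mc{R}) = -\del^X\delbar^X\tr \{ f^{\pr}(\mc{R}) \cdot (\gamma^{\dpr} + \gamma^{\pr}) \} = -\del^X\delbar^X\tr \{ f^{\pr}(\mc{R}) \cdot \gamma \}
\end{equation}
which is the desired formula.
\end{proof}
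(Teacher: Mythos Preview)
Your approach is exactly the one the paper intends: the paper's own ``proof'' is just the sentence ``Combining Proposition \ref{first transgression over cohesive} and Proposition \ref{second transgression over cohesive}'', and your equations \eqref{dtc.1}--\eqref{dtc.3} together with the anticommutativity $\delbar^X\del^X = -\del^X\delbar^X$ carry this out correctly. Once you substitute \eqref{dtc.2} into \eqref{dtc.1} and use anticommutativity, you immediately get $-\del^X\delbar^X\tr\{f^{\pr}(\mc{R})\cdot(\gamma^{\dpr}+\gamma^{\pr})\}$ and you are done.

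However, the paragraph between \eqref{dtc.3} and \eqref{dtc.4}, and the one after \eqref{dtc.4}, are both unnecessary and contain incorrect exotic-degree claims. Since $\gamma^{\dpr}, \gamma^{\pr} \in \mc{G}^{0}$ (not $\mc{G}^{\pm 1}$) and $f^{\pr}(\mc{R}) \in \mc{G}^{0}$, the products $f^{\pr}(\mc{R})\cdot\gamma^{\pr}$ and $f^{\pr}(\mc{R})\cdot\gamma^{\dpr}$ both lie in $\mc{G}^{0}$, so $\tr\{f^{\pr}(\mc{R})\cdot\gamma^{\dpr}\}$ and $\tr\{f^{\pr}(\mc{R})\cdot\gamma^{\pr}\}$ are \emph{not} the $\mc{G}^{1}$ and $\mc{G}^{-1}$ components of anything; they are simply the two summands of $\tr\{f^{\pr}(\mc{R})\cdot\gamma\}$. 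No type or exotic-degree vanishing argument is needed here --- the identity $\delbar^X\del^X = -\del^X\delbar^X$ alone finishes the proof. Delete those two paragraphs and your argument is clean and complete.
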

	
	As a corollary, the Bott-Chern characteristic classes remains invariant over $S$. We shall now apply this proposition to prove the invariance of Bott-Chern cohomology classes under homotopy equivalences. 
	
	For two cohesive modules $(E^{\bullet},\mE^{\dpr})$ and $(F^{\bullet},\mb{F}^{\dpr})$ , we have the following criteria for homotopy equivalence. For its proof, we refer to \cite{Block2010}.
	\begin{proposition}
		$(E^{\bullet},\mE^{\dpr})$ and $(F^{\bullet},\mb{F}^{\dpr})$ are equivalent in the homotopy category $\mr{Ho}(\mc{P}_{\mc{A}})$ if and only if there is a degree zero closed morphism $\phi \in \mc{P}^0_{\mc{A}}$ which is a quasi-isomorphism between the complexes $(E^{\bullet},\mE^{\dpr}_0)$ and $(F^{\bullet},\mb{F}^{\dpr}_0)$.
	\end{proposition}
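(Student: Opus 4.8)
\medskip
\noindent\textbf{Proof strategy.}
The plan is to treat the two implications separately, proving the forward direction by functoriality and the reverse direction through the mapping cone. For the forward implication I would use the operation $(-)_0$ sending a cohesive module $(E^{\bullet},\mb{E}^{\dpr})$ to the bounded complex of vector bundles $(E^{\bullet},\mb{E}^{\dpr}_0)$ and a morphism $\phi=\sum_k\phi_k\in\mc{P}^{\bullet}_{\mc{A}}(E,F)$ to its $\mc{A}^{0,0}$-linear component $\phi_0\colon E^{\bullet}\to F^{\bullet}$. Keeping track of the number of $\delbar^X$-forms one checks $(\phi\circ\psi)_0=\phi_0\circ\psi_0$, $(\mr{id})_0=\mr{id}$ and $(d\phi)_0=\mb{F}^{\dpr}_0\circ\phi_0-(-1)^{\norm{\phi}}\phi_0\circ\mb{E}^{\dpr}_0$, so $(-)_0$ is a dg-functor from $\mc{P}_{\mc{A}}$ to the dg-category of bounded complexes of vector bundles on $X$ and therefore sends homotopy equivalences to homotopy equivalences. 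Hence if $E$ and $F$ are equivalent in $\mr{Ho}(\mc{P}_{\mc{A}})$, then $\phi_0$ is a homotopy equivalence between $(E^{\bullet},\mb{E}^{\dpr}_0)$ and $(F^{\bullet},\mb{F}^{\dpr}_0)$, in particular a quasi-isomorphism.

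For the reverse implication I would use that $\mr{Ho}(\mc{P}_{\mc{A}})$ is triangulated, with distinguished triangles coming from the mapping cones of \cite{Block2010}, so that $\phi$ is an isomorphism in $\mr{Ho}(\mc{P}_{\mc{A}})$ exactly when $\mr{Cone}(\phi)$ is a zero object, i.e.\ when $\mr{id}$ is null-homotopic in $\mc{P}_{\mc{A}}(\mr{Cone}(\phi),\mr{Cone}(\phi))$. Write $G=\mr{Cone}(\phi)$ with cohesive structure $\mb{C}=\mb{C}_{\phi}$ and components $\mb{C}_k$. The bundle map $\mb{C}_0$ is the ordinary mapping cone of the chain map $\phi_0$, so by hypothesis $(G^{\bullet},\mb{C}_0)$ is exact as a complex of sheaves; a bounded exact complex of vector bundles over $X$ has all its differentials of locally constant rank, hence all its kernels and images subbundles, hence is contractible as a complex of bundles. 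Concretely, for any choice of Hermitian metrics on $G^{\bullet}$ the fibrewise Hodge decomposition yields a degree $-1$ bundle map $s_0$ with $\mb{C}_0 s_0+s_0\mb{C}_0=\mr{id}_G$.

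It then remains to promote $s_0$ to a null-homotopy $s=\sum_{k\ge 0}s_k\in\mc{P}^{-1}_{\mc{A}}(G,G)$ of $\mr{id}_G$. The cleanest way is to filter $(\mc{P}^{\bullet}_{\mc{A}}(G,G),d)$ by the number of $\delbar^X$-forms; the filtration is finite since that number is at most $\dim_{\mb{C}}X$ and $G^{\bullet}$ is a bounded complex, and its associated graded differential is the fibrewise operator $[\mb{C}_0,-]$ on $\mc{A}^{0,p}(X,\End^{\bullet}(G))$. Since $(G^{\bullet},\mb{C}_0)$ is a contractible complex of bundles, so is $(\End^{\bullet}(G),[\mb{C}_0,-])$, hence the $E_1$-page of the corresponding spectral sequence vanishes and $(\mc{P}^{\bullet}_{\mc{A}}(G,G),d)$ is acyclic; in particular the $d$-closed element $\mr{id}_G$ is $d$-exact, which is precisely a null-homotopy. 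Unwinding the acyclicity recovers the inductive scheme $[\mb{C}_0,s_m]=-\sum_{j=1}^{m}[\mb{C}_j,s_{m-j}]$ for the higher $s_k$, whose right-hand side is automatically $[\mb{C}_0,-]$-closed by the flatness of $\mb{C}$ and hence solvable. This shows $\mr{Cone}(\phi)\simeq 0$, so $\phi$ is an equivalence in $\mr{Ho}(\mc{P}_{\mc{A}})$.

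The hard part will be the passage from $s_0$ to the full null-homotopy $s$: one must verify that the form-degree filtration is $d$-stable with associated graded differential exactly $[\mb{C}_0,-]$ (all Leibniz corrections coming from the connection component $\mb{C}_1$ cancelling), that the spectral sequence converges because the filtration is finite, and, in the inductive formulation, that each obstruction $\sum_{j=1}^{m}[\mb{C}_j,s_{m-j}]$ is a genuine $[\mb{C}_0,-]$-cocycle. A secondary technical point, needed to produce $s_0$ in the first place, is the standard fact that a bounded exact complex of (real-analytic, resp.\ smooth) vector bundles over $X$ has all its images subbundles and is therefore contractible, which one proves by downward induction along the complex.
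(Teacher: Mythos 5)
The paper does not prove this proposition at all: immediately after stating it, it writes ``For its proof, we refer to \cite{Block2010}.'' Your blind proof is essentially the argument from that reference, and it is correct: the forward direction via the dg-functor $(-)_0$ extracting the $\mc{A}^{0,0}$-components, and the converse by showing $\mr{Cone}(\phi)$ is contractible, first producing the fibrewise homotopy $s_0$ from exactness of $(G^{\bullet},\mb{C}_0)$ (local constancy of ranks plus a metric) and then solving $[\mb{C}_0,s_m]=-\sum_{j=1}^{m}[\mb{C}_j,s_{m-j}]$ inductively in form degree, the obstructions being $[\mb{C}_0,-]$-closed by flatness and killable because $(\End^{\bullet}G,[\mb{C}_0,-])$ admits the explicit smooth contraction $\alpha\mapsto s_0\circ\alpha$. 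The only point deserving one extra line is the identification of ``quasi-isomorphism'' with fibrewise exactness of the cone, which is what guarantees that the Laplacian $\mb{C}_0\mb{C}_0^{*}+\mb{C}_0^{*}\mb{C}_0$ is invertible and $s_0$ smooth; with that made explicit, the proof is complete.
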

	
	Using this criteria, the invariance of Bott-Chern classes will be proved in two steps. First we show that if the underlying complex $(E^{\bullet},\mE^{\dpr}_0)$ is acyclic, the characteristic classes are zero. Next we show that the Bott-Chern characteristic classes are additive with respect to short exact sequences of mapping cones and reduce to the acyclic case.
	
	\subsection{Transgression formula for acyclic complex}
	We start by define a rescaling operation on superconnections. If $t\in \mb{R}^{+}$ is a positive real constant and $\mE$ is a superconnection, define 
	\begin{equation}
	\mE_t = \sum_k t^{1-k}\mE_k
	\end{equation}
	It's easy to verify that $\mE^{\dpr}_t = (\mE^{\dpr})_t$ and $\mb{E}^{\dpr}_t$ are flat. So $\mE^{\dpr}_t$ forms a smooth family of cohesive structures.
	
	\begin{definition}
		We define the grading operator $N_E$ for the cohesive module $E$ as an element in $\mc{A}^{0}(X,\End^0(E^{\bullet}))$ such that it acts by:
		\begin{equation}
		N_E (A\otimes \omega)= \norm{A} \cdot A \otimes \omega
		\end{equation}
	\end{definition}
	
	\begin{lemma}\label{degree commutator}
		With $N_E$ so defined, we choose $\gamma^{\dpr}_t=\frac{1}{t}N_E \in \mc{G}^0$ and we have
		\begin{equation}\label{degree commutator eq:1}
			[\mE^{\dpr}_t,\gamma^{\dpr}_t] = \frac{1}{t}\sum_k (1-k)(\mE^{\dpr}_{k,t}) = \frac{d}{dt}\mb{E}^{\dpr}_t
		\end{equation}
	\end{lemma}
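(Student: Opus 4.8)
The plan is to verify the two equalities in \eqref{degree commutator eq:1} separately; each is a short algebraic computation once the rescaled cohesive structure is expanded into its components of pure form-degree, and no analysis is needed.

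For the rightmost equality I would write $\mE^{\dpr}_t=\sum_k\mE^{\dpr}_{k,t}$, where by the definition of the rescaling $\mE^{\dpr}_{k,t}=t^{1-k}\mE^{\dpr}_k$ is precisely the component of $\mE^{\dpr}_t$ lying in $\mc{A}^{0,k}(X,\End^{1-k}E)$. Differentiating each summand in $t$ gives $\frac{d}{dt}(t^{1-k}\mE^{\dpr}_k)=(1-k)t^{-k}\mE^{\dpr}_k=\frac{1-k}{t}\,\mE^{\dpr}_{k,t}$, and summing over $k$ yields $\frac{d}{dt}\mE^{\dpr}_t=\frac{1}{t}\sum_k(1-k)\mE^{\dpr}_{k,t}$, which is exactly the stated identity.

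For the leftmost equality I would again argue component by component. Since $N_E\in\mc{A}^0(X,\End^0 E^{\bullet})$ is $\mc{A}^{0,\bullet}(X)$-linear and of total degree $0$, the bracket splits as $[\mE^{\dpr}_t,\tfrac{1}{t}N_E]=\tfrac{1}{t}\sum_k[\mE^{\dpr}_{k,t},N_E]$, so it suffices to identify each $[\mE^{\dpr}_{k,t},N_E]$. Evaluating on a homogeneous section $s\in\mc{A}^{0,\bullet}(X,E^d)$ and using that $\mE^{\dpr}_{k,t}$ raises the $E^{\bullet}$-grading by $1-k$, so that $\mE^{\dpr}_{k,t}s\in\mc{A}^{0,\bullet}(X,E^{d+1-k})$, one finds $N_E(\mE^{\dpr}_{k,t}s)-\mE^{\dpr}_{k,t}(N_E s)=(d+1-k)\mE^{\dpr}_{k,t}s-d\,\mE^{\dpr}_{k,t}s=(1-k)\mE^{\dpr}_{k,t}s$; that is, the bracket with $N_E$ simply reads off the weight $1-k$ of that component. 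Inserting this together with the factor $\tfrac{1}{t}$ gives $[\mE^{\dpr}_t,\gamma^{\dpr}_t]=\tfrac{1}{t}\sum_k(1-k)\mE^{\dpr}_{k,t}$, which matches the middle expression and closes the chain of equalities.

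The only point that needs care is the sign and ordering convention in the bracket with $N_E$: it must be taken in the same convention used for $d$ on $\mc{P}_{\mc{A}^{0,\bullet}(X)}$ and in the gauge-transformation computation of the previous subsection (the one under which $\frac{d}{dt}\mE^{\dpr}_t=[\mE^{\dpr}_t,\gamma^{\dpr}_t]$ was obtained for gauge families), so that the weight enters as $+(1-k)$ rather than $-(1-k)$; once this is pinned down the identity is immediate. With the lemma in hand, the rescaling family $\{\mE^{\dpr}_t\}_{t\in\mb{R}^+}$ is realized as an exact family in the sense of the previous subsection, with the explicit lift $\gamma^{\dpr}_t=\tfrac{1}{t}N_E$ of $\delta^{\dpr}_t$, so that Proposition \ref{double transgression cohesive} applies along this family.
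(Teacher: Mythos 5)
Your proposal is correct and follows essentially the same route as the paper: differentiate the rescaled family $\mE^{\dpr}_{k,t}=t^{1-k}\mE^{\dpr}_k$ termwise to get the right-hand equality, and evaluate the bracket with $N_E$ on a section of $E^d$, using that $\mE^{\dpr}_k$ shifts the $E^{\bullet}$-degree by $1-k$, to get the left-hand one. Your explicit remark about fixing the ordering of the bracket so the weight comes out as $+(1-k)$ is in fact slightly more careful than the paper, which just says ``taking the difference'' without specifying the order.
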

	\begin{proof} 
		\begin{align}
		& \frac{d}{dt}\mb{E}^{\dpr}_t
		= \sum_k (1 - k) t^{-k} \mb{E}^{\dpr}_k 
		=\frac{1}{t}\sum_k (1 - k) (\mb{E}^{\dpr}_{k})_{t}
		\end{align}
		On the other hand, we may take $s \in E^{d}$ so we have
		\begin{align}
		& \mb{E}^{\dpr}_t \circ N_E (s) 
		= d \sum_k t^{1-k} \mb{E}^{\dpr}_{k}(s)
		\end{align}
		and note that $\mb{E}^{\dpr}_k(s) \in \mc{A}^{0, k}(X, E^{d + 1 - k})$, so we have
		\begin{align}
		& N_E \circ \mb{E}^{\dpr}_{k, t}(s)
		= N_E \sum_k t^{1-k} \mb{E}^{\dpr}_k(s)
		= \sum_k t^{1-k} (d + 1 - k) \mb{E}^{\dpr}_k(s)
		\end{align}
		Taking the difference of the above two equalities and divide both sides by $t$, we get \eqref{degree commutator eq:1}.
	\end{proof}
	
	Since $N_E$ is clearly self adjoint, we have $\gamma^{\pr} = \gamma^{\dpr} = \frac{1}{t} N_E$ and we have the following corollary. 
	\begin{corollary} For the family of rescaled cohesive structures, we have
		\begin{equation}
		\frac{d}{dt}\tr(e^{-\mc{R}_t})
		= -\frac{2}{t}\del^X\delbar^X\tr(e^{-\mc{R}_t} \cdot N_E)
		\end{equation} 
	\end{corollary}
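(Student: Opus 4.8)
The plan is to specialize the double transgression formula for Bott--Chern forms over an exact submanifold, Proposition~\ref{double transgression cohesive}, to the one-parameter family of rescaled cohesive structures $\mb{E}^{\dpr}_t = \sum_k t^{1-k}\mb{E}^{\dpr}_k$. First I would observe that Lemma~\ref{degree commutator} identifies this family as exact in the sense required by Proposition~\ref{double transgression cohesive}: the tangent vector $\frac{d}{dt}\mb{E}^{\dpr}_t = \delta^{\dpr}_t$ admits the explicit smooth lift $\gamma^{\dpr}_t = \frac{1}{t}N_E$, meaning $[\mb{E}^{\dpr}_t, \gamma^{\dpr}_t] = \delta^{\dpr}_t$. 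Since $N_E$ is self-adjoint, we have $\gamma^{\pr}_t = (\gamma^{\dpr}_t)^* = \frac{1}{t}N_E$ as well, so the combined lift $\gamma = \gamma^{\pr} + \gamma^{\dpr} = \frac{2}{t}N_E$ lies in $\mc{G}^0$.

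Next I would apply Proposition~\ref{double transgression cohesive} with the power series $f(T) = \exp(-T)$, for which $f^{\pr}(T) = -\exp(-T)$. Along the curve $t \mapsto \mb{E}^{\dpr}_t$, the $\mc{S}$-directional differential $d^{\mc{S}}$ restricts to $\frac{d}{dt}\,dt$, so the formula $d^{\mc{S}}\tr f(\mc{R}) = -\del^X\delbar^X\tr\{f^{\pr}(\mc{R})\cdot\gamma\}$ becomes
\begin{equation}
\frac{d}{dt}\tr(e^{-\mc{R}_t})
= -\del^X\delbar^X\tr\Big\{(-e^{-\mc{R}_t})\cdot \tfrac{2}{t}N_E\Big\}
= \frac{2}{t}\del^X\delbar^X\tr(e^{-\mc{R}_t}\cdot N_E).
\end{equation}
A sign needs care here: the statement to be proved has $-\frac{2}{t}\del^X\delbar^X$, so I would recheck the sign conventions, in particular whether $d^X\delbar^X$ versus $\del^X\delbar^X$ anticommute (they do, up to sign) and the sign in $f^{\pr}$; reconciling these will give the stated $-\frac{2}{t}\del^X\delbar^X\tr(e^{-\mc{R}_t}\cdot N_E)$. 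The convergence of $\exp(-\mc{R}_t)$ as a power series in $\mc{R}_t$ is not an issue since the trace only picks out finitely many terms, as noted in the remark following the definition of characteristic forms.

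The main obstacle is verifying that the rescaled family genuinely satisfies the hypotheses of Proposition~\ref{double transgression cohesive} without circularity --- specifically, that the curve $t \mapsto \mb{E}^{\dpr}_t$ stays inside $\mc{E}^{\dpr}$ (the flatness $(\mb{E}^{\dpr}_t)^2 = 0$ is already noted in the text, following from $(\mb{E}^{\dpr})^2 = 0$ degree by degree under the rescaling), and that $\gamma^{\dpr}_t = \frac{1}{t}N_E$ is indeed a \emph{global} smooth section of $\mc{G}^{\dpr,0}$, which is immediate since $N_E$ is the globally defined grading operator. Once these points are settled, the corollary is a direct substitution. I would also note in passing that this corollary is exactly the infinitesimal form of the Proposition on acyclic cohesive modules stated in the introduction: integrating $\frac{dt}{t}$ from $1$ to $\infty$ and using that $\tr(e^{-\mc{R}_t}) \to 0$ as $t \to \infty$ in the acyclic case (because the leading operator $\mb{E}^{\dpr}_0$ dominates after rescaling and makes the complex contractible) will yield $\mr{ch}(E) = \del^X\delbar^X\mc{I}_E$.
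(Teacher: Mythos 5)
Your proposal follows exactly the paper's route: Lemma \ref{degree commutator} exhibits the rescaled family as exact with lift $\gamma^{\dpr}_t = \tfrac{1}{t}N_E$, self-adjointness of $N_E$ gives $\gamma = \gamma^{\pr}+\gamma^{\dpr} = \tfrac{2}{t}N_E$, and Proposition \ref{double transgression cohesive} with $f(T)=e^{-T}$ yields the identity. The sign you flag is a genuine discrepancy, but it sits in the paper itself rather than in your argument --- a literal substitution of $f^{\pr}(T)=-e^{-T}$ into Proposition \ref{double transgression cohesive} produces $+\tfrac{2}{t}\del^X\delbar^X\mathrm{Tr}_s(e^{-\mc{R}_t}N_E)$, and the overall sign/factor bookkeeping is further muddied by the paper's two inconsistent rescaling conventions ($t^{(1-k)/2}$ in the introduction versus $t^{1-k}$ in Section 4.2) --- so your derivation matches the paper's intended proof.
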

	
	Finally, by the same arguments in \cite{Bismut1988}, if $(E^{\bullet},\mb{E}^{\dpr}_0)$ is acyclic, then the degree zero component $\Delta_E$ of its curvature 
	$$\mc{R}_t^{0,0} = t^2 (\mb{E}^{\dpr}_0 + \mb{E}^{\pr}_0) ^ 2 
	= t^2 \Delta_{E}$$ 
	is a strictly positive element and the characteristic form of the Chern character
	$$\tr\exp(-R_t) =  \tr \exp(-t^2 \Delta_E + \mathcal{O}(t))$$
	decays exponentially fast uniformly on $X$ when $t$ approaches $\infty$. Applying this result and let $t \to \infty$, we transgressed the characteristic forms to zero as desired.
	
	\begin{theorem}\label{acyclic class}
		If $(E^{\bullet}, \mb{E}^{\dpr})$ is a cohesive module such that $(E^{\bullet}, \mb{E}^{\dpr}_0)$ is an acyclic complex, then the integral 
		\begin{equation}
		\mc{I}_E = \int_{1}^{\infty}\tr \{\exp({-\mc{R}_t}) \cdot N_E \}\frac{dt}{t}
		\end{equation}
		is finite and
		\begin{equation}
		\tr\exp({-\mc{R}})=\del^X\delbar^X \mc{I}_E
		\end{equation}
	\end{theorem}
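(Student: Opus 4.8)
The plan is to integrate, over the rescaling parameter $t$, the pointwise transgression identity proved in the Corollary immediately preceding the statement, and then to use the acyclicity hypothesis to show that the transgressed integrand decays exponentially as $t \to \infty$; this makes $\mc{I}_E$ converge and kills the boundary term at infinity, leaving exactly the asserted formula.

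First I would fix the set-up. Let $\mb{E}_t$ be the Chern superconnection of the rescaled Hermitian cohesive module $(E^{\bullet}, \mb{E}^{\dpr}_t, h)$ and $\mc{R}_t = \mb{E}_t^2$ its curvature, so $\mc{R}_1 = \mc{R}$. By Lemma \ref{degree commutator} the rescaling path is exact with lift $\gamma^{\dpr}_t = \gamma^{\pr}_t = \tfrac{1}{t}N_E$, so Proposition \ref{double transgression cohesive} — in the form of the Corollary stated just before the theorem — gives, for every $t \in \mb{R}^{+}$,
\[
  \frac{d}{dt}\,\tr(e^{-\mc{R}_t}) = -\frac{2}{t}\,\del^X\delbar^X\,\tr(e^{-\mc{R}_t}\cdot N_E).
\]
Since $\del^X\delbar^X$ is independent of $t$ and commutes with the $t$-integral, integrating from $1$ to $T$ yields
\[
  \tr(e^{-\mc{R}}) - \tr(e^{-\mc{R}_T}) = 2\,\del^X\delbar^X\!\int_1^T \tr(e^{-\mc{R}_t}\cdot N_E)\,\frac{dt}{t},
\]
so the theorem follows once one knows that $\tr(e^{-\mc{R}_t}\cdot N_E)$ is integrable against $dt/t$ near $t=\infty$ and that $\tr(e^{-\mc{R}_T}) \to 0$ as $T \to \infty$. (The spurious factor $2$ reflects the normalization $\mb{E}_t = \sum_k t^{1-k}\mb{E}_k$ adopted in this subsection and is absorbed by a rescaling of $t$; with the half-power rescaling of the Introduction it does not appear.)

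Both facts rest on a single uniform estimate. Decomposing $\mc{R}_t = \sum_{k,l}t^{2-k-l}\mb{E}_k\mb{E}_l$ by form bidegree, its $(0,0)$-part is $\mc{R}_t^{0,0} = t^2(\mb{E}^{\dpr}_0 + \mb{E}^{\pr}_0)^2 = t^2\Delta_E$, while $\mc{R}_t - t^2\Delta_E$ is a finite sum of terms $t^{a}\omega$ with $a$ ranging over a fixed finite set of integers ($a \le 1$) and $\omega$ a $t$-independent form of positive total degree. Acyclicity of $(E^{\bullet},\mb{E}^{\dpr}_0)$ makes $\Delta_E$ fibrewise invertible, hence $\Delta_E \ge c\cdot\mr{Id}$ for some $c>0$ by compactness of $X$, and the forms $\omega$ are nilpotent on $X$. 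Consequently the Volterra (Duhamel) expansion of $e^{-\mc{R}_t}$ about $e^{-t^2\Delta_E}$ terminates after finitely many terms, each a finite product of the $\omega$'s times a fixed power of $t$, interleaved with factors $e^{-s_j t^2\Delta_E}$, $s_j \ge 0$; using $\|e^{-s t^2\Delta_E}\| \le e^{-s c t^2}$ one bounds each term, uniformly on $X$, by $C\,t^{N} e^{-c' t^2}$ for fixed $C, N, c'>0$, and multiplying on the right by the bounded operator $N_E$ preserves this bound. Taking supertraces, $|\tr(e^{-\mc{R}_t})| + |\tr(e^{-\mc{R}_t}\cdot N_E)| \le C' t^{N} e^{-c' t^2}$ for $t \ge 1$. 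This yields both the finiteness of $\mc{I}_E$ and $\tr(e^{-\mc{R}_T}) \to 0$; letting $T \to \infty$ in the integrated identity above and using $\mc{R}_1 = \mc{R}$ gives $\tr\exp(-\mc{R}) = \del^X\delbar^X\mc{I}_E$.

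The main obstacle is exactly this uniform-in-$X$ exponential estimate for the superconnection heat operator $e^{-\mc{R}_t}$: one must dominate the polynomial-in-$t$ prefactors produced by the positive-degree part of $\mc{R}_t$ by the Gaussian factor $e^{-t^2\Delta_E}$, and this is precisely where the acyclicity assumption enters, through the strict positivity of $\Delta_E$. I would carry this out following the heat-kernel estimates for superconnections in \cite{Bismut1988}, to which the discussion immediately preceding the statement already alludes; once $\Delta_E > 0$ and the nilpotency of the positive-degree forms are in hand, the organization of the Duhamel expansion and the bookkeeping of the powers of $t$ are routine.
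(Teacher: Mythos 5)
Your proposal is correct and follows essentially the same route as the paper: integrate the transgression identity $\frac{d}{dt}\tr(e^{-\mc{R}_t}) = -\frac{2}{t}\del^X\delbar^X\tr(e^{-\mc{R}_t}\cdot N_E)$ from the corollary of Lemma \ref{degree commutator} and Proposition \ref{double transgression cohesive}, then use the strict positivity of $\Delta_E$ coming from acyclicity to get uniform exponential decay of $\tr\exp(-\mc{R}_t)$ and convergence of $\mc{I}_E$ (the paper delegates this estimate to the arguments of \cite{Bismut1988}, which you spell out via the Duhamel expansion). You also correctly flag the factor of $2$ arising from the $t^{1-k}$ versus $t^{(1-k)/2}$ normalization discrepancy between this subsection and the Introduction, which the paper leaves unreconciled.
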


	
	\subsection{Additive properties of Bott-Chern classes}
	We begin by defining the mapping cone of a closed morphism. Let $\phi \in \mc{P}^0_{\mc{A}}(E,F)$ be a closed degree $0$ morphism between two cohesive modules $(E^{\bullet}, \mE^{\dpr})$ and $(F^{\bullet}, \mb{F}^{\dpr})$.
	\begin{definition}
		The mapping cone $\textrm{Cone}(\phi)$ is the cohesive module $(\mr{Cone}(\phi),\mb{C}^{\dpr}_{\phi})$ whose underlying complex vector bundle is defined by:
		$$\mr{Cone}(\phi)^{\bullet}=F^{\bullet}\oplus E^{\bullet+1}$$
		and with respect to this decomposition, the cohesive structure $\mb{C}^{\dpr}_{\phi}$ is given by:
		$$\mb{C}^{\dpr}_{\phi}=
		\begin{pmatrix}
		& \mb{F}^{\dpr}  & \phi \\
		& 0              & -\mE^{\dpr}
		\end{pmatrix}$$
	\end{definition}
	
	Consider the one parameter family of morphisms $\phi_t = t\phi$ with $t \in [0, 1]$ such that $\phi_0 = 0$ and $\phi_1 = \phi$.  The corresponding mapping cones have the same underlying bundle $\mr{Cone}(\phi)^{\bullet}$ and cohesive structures $\mb{C}^{\dpr}_{t}$. It's a simple calculation that 
	\begin{equation}
	\frac{d}{dt}\mb{C}^{\dpr}_{\phi_t}=
	\begin{pmatrix}
	& 0 & \phi \\
	& 0 & 0
	\end{pmatrix}
	=\alpha^{\dpr}
	\end{equation}
	is constant. If we set for $t > 0$, $\gamma^{\dpr}_t \in \mc{G}^{\dpr, 0}$ by the formula:
	$$\gamma^{\dpr}_t=
	\begin{pmatrix}
	& 0 & 0 \\
	& 0 & \frac{1}{t}\cdot\mr{Id}_F
	\end{pmatrix}
	=\frac{1}{t}\gamma^{\dpr}$$
	then it satisfies that
	\begin{equation}
	[\mb{C}^{\dpr}_{t}, \gamma^{\dpr}_t]=\frac{d}{dt}\mb{C}^{\dpr}_{t}
	\end{equation}  
	So we can apply the previous Proposition \ref{double transgression cohesive} for all nonzero value of $t$. Note again in this case $\gamma^{\pr} = \gamma^{\dpr}$, we have the following equality of characteristic forms. 
	\begin{equation}
		f(\mr{Cone}(\phi),\mc{C}^{\dpr}_1)-f(\mr{Cone}(\phi),\mc{C}^{\dpr}_{s})
		=-\del^X\delbar^X \{\int_{s}^{1} 2\tr f^{\pr} (\mc{R}_t) \cdot \gamma^{\dpr}\frac{dt}{t} \}
	\end{equation}

	It's clear that we can't let $t\to 0$ in the above formula due to the singularity. Instead, motivated by the construction in \cite{Botta}, we will modify the integrand to remove the singularity. 
	
	To do this, we calculate $\mc{R}_t$ explicitly as
	\begin{equation}
	\mc{R}_t=
	\begin{pmatrix}
	&\mc{R}_F+t^2\phi\phi^* & t(\mb{F}^{\pr}\phi-\phi\mb{E}^{\pr}) \\
	&t(\phi^*\mb{F}^{\dpr}-\mb{E}^{\dpr}\phi^*) &\mc{R}_E+t^2\phi^*\phi
	\end{pmatrix}
	=\mc{R}_0+tA_t
	\end{equation}
	where $\mc{R}_0$ is the curvature computed by $\mb{C}^{\dpr}_{0}$ computed using the direct sum Hermitian form
	\begin{equation}
	\mc{R}_0=
	\begin{pmatrix}
	&\mc{R}_F & 0 \\
	&0 &\mc{R}_E
	\end{pmatrix}
	\end{equation}
	and $A_t$ is the reminder term
	\begin{equation}
	A_t=
	\begin{pmatrix}
	&t\phi\phi^* & \mb{F}^{\pr}\phi-\phi\mb{E}^{\pr} \\
	&\phi^*\mb{F}^{\dpr}-\mb{E}^{\dpr}\phi^* &t\phi^*\phi
	\end{pmatrix}
	\end{equation}
	If we evaluate a convergent power series $g(T)$ on $\mc{R}_t$, we have
	\begin{equation}\label{curvature expansion}
	g(\mc{R}_t)=g(\mc{R}_0+tA_t)=g(\mc{R}_0)+t R_{g}(t,\mc{R}_0,A_t)
	\end{equation}
	for some reminder term $R_g$ that is a power series in $t$ with coefficients that are polynomials in $\mc{R}_0$ and $A_t$. 
	
	We have already shown that $\tr f^{\pr}(\mc{R}_0)$ is $d^X$-closed by Proposition \ref{characteristic forms are closed}, therefore it is also $\del^X\delbar^X$-closed. So we can subtract it from the above equation and derive the following equation:
	\begin{equation}
		f(\mr{Cone}(\phi),\mc{C}^{\dpr}_1)-f(\mr{Cone}(\phi),\mc{C}^{\dpr}_{s})
		=-2\del^X\delbar^X\int_{s}^{1} \tr \{f^{\pr}(\mc{R}_t)\gamma^{\dpr} - f^{\pr} (\mc{R}_0)\gamma^{\dpr} \}\frac{dt}{t}
	\end{equation}
	Now we substitute equation \eqref{curvature expansion} with $g = f^{\pr}$ into the above equality, the singular term $\frac{1}{t}$ cancels out and we get
	\begin{equation}
	f(\mr{Cone}(\phi),\mc{C}^{\dpr}_1)-f(\mr{Cone}(\phi),\mc{C}^{\dpr}_{s})
	=-2\del^X\delbar^X\int_{s}^{1} \tr R_{f^{\pr}}(t,A_t,\mc{R}_0)dt
	\end{equation}
	Now the we can let $s \to 0$ since the integrand is bounded and we established the following proposition.
	
	\begin{proposition}
		The characteristic form $f(\mr{Cone}(\phi),\mc{C}^{\dpr}_1)$ coincide with $f(\mr{Cone}(\phi),\mc{C}^{\dpr}_0)$ in Bott Chern cohomology.
	\end{proposition}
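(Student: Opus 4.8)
The plan is to package the explicit computations recorded above into a single transgression identity valid on the whole interval $[0,1]$ and then pass to the limit $s\to 0$. First I would fix the family $\phi_t = t\phi$, $t\in[0,1]$, with associated mapping cone cohesive structures $\mb{C}^{\dpr}_t$ on the fixed $\mb{Z}$-graded bundle $\mr{Cone}(\phi)^{\bullet} = F^{\bullet}\oplus E^{\bullet+1}$, so that $\mb{C}^{\dpr}_1 = \mb{C}^{\dpr}_{\phi}$ and $\mb{C}^{\dpr}_0$ is the block-diagonal (direct sum) structure $\mr{diag}(\mb{F}^{\dpr},-\mb{E}^{\dpr})$. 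Since $\tfrac{d}{dt}\mb{C}^{\dpr}_t = \alpha^{\dpr}$ is the constant off-diagonal term and, for $t>0$, the element $\gamma^{\dpr}_t = \tfrac{1}{t}\gamma^{\dpr}$ satisfies $[\mb{C}^{\dpr}_t,\gamma^{\dpr}_t] = \tfrac{d}{dt}\mb{C}^{\dpr}_t$ with $\gamma^{\pr}_t = (\gamma^{\dpr}_t)^{*} = \gamma^{\dpr}_t$, the restriction of the family to $(0,1]$ is an exact submanifold of $\mc{E}^{\dpr}$ in the sense defined above. Applying Proposition \ref{double transgression cohesive} then yields, for every $s\in(0,1]$,
\begin{equation*}
f(\mr{Cone}(\phi),\mb{C}^{\dpr}_1) - f(\mr{Cone}(\phi),\mb{C}^{\dpr}_s) = -2\,\del^X\delbar^X\int_s^1 \tr\{f^{\pr}(\mc{R}_t)\cdot\gamma^{\dpr}\}\,\tfrac{dt}{t}.
\end{equation*}

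Next I would regularize the integrand at $t=0$. Using the explicit expansions $\mc{R}_t = \mc{R}_0 + tA_t$ and $f^{\pr}(\mc{R}_t) = f^{\pr}(\mc{R}_0) + t\,R_{f^{\pr}}(t,\mc{R}_0,A_t)$ recorded above, I would subtract the term $\tr\{f^{\pr}(\mc{R}_0)\cdot\gamma^{\dpr}\}\tfrac{dt}{t}$ from the integrand; this leaves the right-hand side unchanged because $\tr f^{\pr}(\mc{R}_0)$ is $d^X$-closed by Proposition \ref{characteristic forms are closed}, hence $\del^X\delbar^X$-closed. After the subtraction the singular factor $1/t$ cancels against the $t$ in the expansion, giving
\begin{equation*}
f(\mr{Cone}(\phi),\mb{C}^{\dpr}_1) - f(\mr{Cone}(\phi),\mb{C}^{\dpr}_s) = -2\,\del^X\delbar^X\int_s^1 \tr R_{f^{\pr}}(t,A_t,\mc{R}_0)\,dt.
\end{equation*}
Finally, since the entries of $A_t$ are polynomials in $t$ with coefficients independent of $t$ and $\mc{R}_0$ is fixed, a norm estimate of the same type already used for the bracket notation $g(A;B)$ shows that $\tr R_{f^{\pr}}(t,A_t,\mc{R}_0)$ extends to a continuous, hence uniformly bounded, family of $(p,p)$-forms on $[0,1]$. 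Thus $\int_0^1\tr R_{f^{\pr}}(t,A_t,\mc{R}_0)\,dt$ converges, and letting $s\to 0$ gives $f(\mr{Cone}(\phi),\mb{C}^{\dpr}_1) - f(\mr{Cone}(\phi),\mb{C}^{\dpr}_0) = -2\,\del^X\delbar^X\int_0^1 \tr R_{f^{\pr}}(t,A_t,\mc{R}_0)\,dt$, which lies in the image of $\del^X\delbar^X$ on $(p-1,p-1)$-forms; hence the two characteristic forms coincide in $H_{BC}^{\bullet}(X)$.

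The routine parts are the exactness of the family for $t>0$ and the cancellation of the singular term, both immediate from the matrix formulas above. The step I expect to be the main obstacle is the last one: verifying that the regularized integrand is a genuine smooth form on $X$ that stays bounded uniformly on $X$ as $t\to 0$. Concretely this requires (i) that the curvature operators $\mc{R}_t$ remain within the radius of convergence of $f^{\pr}$ uniformly for $t\in[0,1]$, which follows from a compactness argument on $X$ together with the polynomial dependence of $A_t$ on $t$, and (ii) that the formal remainder series $R_{f^{\pr}}(t,A_t,\mc{R}_0)$ converges, again a termwise norm estimate as in the definition of $g(A;B)$. Once these estimates are in place the passage to the limit $s\to 0$ is justified and the proposition follows.
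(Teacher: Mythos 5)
Your proposal is correct and follows essentially the same route as the paper: the same family $\phi_t = t\phi$ with $\gamma^{\dpr}_t = \tfrac{1}{t}\gamma^{\dpr}$, the same application of Proposition \ref{double transgression cohesive} on $(s,1]$, the same subtraction of the $\del^X\delbar^X$-closed term $\tr\{f^{\pr}(\mc{R}_0)\cdot\gamma^{\dpr}\}\tfrac{dt}{t}$ to cancel the singularity via the expansion $f^{\pr}(\mc{R}_t)=f^{\pr}(\mc{R}_0)+tR_{f^{\pr}}$, and the same limit $s\to 0$. The convergence estimates you flag as the main obstacle are indeed the only point the paper passes over in silence, and your sketch of how to handle them is adequate.
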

	
	Under the direct sum Hermitian metric $h_E \oplus h_F$ on the mapping cone $\mr{Cone}^{\bullet}$, we have the following simple equality of differential forms:
	\begin{equation}\label{additive property}
	f(\mr{Cone}(\phi)^{\bullet},\mb{C}^{\dpr}_0,h_{\phi})=f(F^{\bullet},\mb{F}^{\dpr},h_F)-f(E^{\bullet},\mb{E}^{\dpr},h_E)
	\end{equation}
	where the minus sign comes from the shift in degree of $E^{\bullet}$ in the mapping cone. Together with the previous corollary, we established the following proposition.
	
	\begin{proposition}
		The Bott Chern cohomology classes of an exact triangle 
		$$0 \rightarrow (E^{\bullet}, \mb{E}^{\dpr}) \xrightarrow{\phi} 
		(F^{\bullet}, \mb{F}^{\dpr}) \xrightarrow{i_F} (\mr{Cone}^{\bullet}(\phi), \mb{C}_{\phi}) $$
		is additive in the sense
		\begin{equation}
		f(E^{\bullet}, \mb{E}^{\dpr}) - f(F^{\bullet}, \mb{F}^{\dpr}) 
		+ f(\mr{Cone}^{\bullet}(\phi), \mb{C}_{\phi}) = 0 
		\end{equation}
		in Bott-Chern cohomology
	\end{proposition}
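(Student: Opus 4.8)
Since essentially all the analytic work for this statement has already been done — the transgression machinery of Section~3, and in particular the desingularisation of the family $\mb{C}^{\dpr}_{\phi_t}$, $\phi_t=t\phi$, obtained by subtracting off the $d^X$-closed (hence $\del^X\delbar^X$-closed) form $\tr f^{\pr}(\mc{R}_0)$ — the plan is simply to read this proposition off from the two propositions immediately preceding it, chaining their conclusions in Bott-Chern cohomology rather than at the level of differential forms. In other words, I would treat it as a corollary whose only content beyond what is proved above is sign bookkeeping and an appeal to metric independence.

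First I would invoke the corollary of the Bott-Chern transgression formula that the class $[f(E^{\bullet},\mb{E}^{\dpr})]$ in $H_{BC}^{\bullet}(X)$ does not depend on the chosen Hermitian metric, so that each of $f(E)$, $f(F)$, $f(\mr{Cone}(\phi))$ may be computed with whatever metric is convenient; for $\mr{Cone}(\phi)$ I take the direct-sum metric $h_\phi=h_E\oplus h_F$ adapted to the splitting $\mr{Cone}(\phi)^{\bullet}=F^{\bullet}\oplus E^{\bullet+1}$. Second, the preceding proposition — proved by applying Proposition~\ref{double transgression cohesive} to the exact family $\mb{C}^{\dpr}_{\phi_t}$ with lift $\gamma^{\dpr}_t$ and then removing the $1/t$ singularity at $t=0$ — gives $[f(\mr{Cone}(\phi),\mb{C}^{\dpr}_1)]=[f(\mr{Cone}(\phi),\mb{C}^{\dpr}_0,h_\phi)]$. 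Third, with $\phi=0$ the cohesive structure $\mb{C}^{\dpr}_0$ is block diagonal, hence so are the associated Chern superconnection and its curvature $\mc{R}_0$, with diagonal blocks the Chern data of $(F^{\bullet},\mb{F}^{\dpr},h_F)$ and of the shift $(E^{\bullet+1},-\mb{E}^{\dpr},h_E)$; additivity of $\tr$ over a direct sum then produces the elementary identity \eqref{additive property}, $f(\mr{Cone}(\phi)^{\bullet},\mb{C}^{\dpr}_0,h_\phi)=f(F^{\bullet},\mb{F}^{\dpr},h_F)-f(E^{\bullet},\mb{E}^{\dpr},h_E)$, the minus sign being forced because passing from $E^{\bullet}$ to $E^{\bullet+1}$ flips every $\mb{Z}_2$-degree and hence reverses the sign of $\tr$, while the curvature is unchanged since $(-\mb{E})^2=\mb{E}^2$. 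Passing to Bott-Chern cohomology and stringing these three equalities together yields $[f(\mr{Cone}(\phi))]=[f(F)]-[f(E)]$, which rearranges to the asserted $f(E^{\bullet},\mb{E}^{\dpr})-f(F^{\bullet},\mb{F}^{\dpr})+f(\mr{Cone}^{\bullet}(\phi),\mb{C}_\phi)=0$.

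I expect the only genuine point of care to be the sign bookkeeping: keeping straight that the degree shift in $\mr{Cone}(\phi)^{\bullet}$ is exactly what produces the $-f(E)$ term in \eqref{additive property}, that this is consistent with the convention sending $(E^{\bullet},\mb{E})$ to $(E^{\bullet+1},-\mb{E})$, and that the orientation of the connecting morphism $i_F$ in the exact triangle is compatible with the placement of these signs. Once these are pinned down there is no remaining analytic obstacle — the argument is a two-line chain of equalities in $H_{BC}^{\bullet}(X)$.
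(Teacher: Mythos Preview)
Your proposal is correct and follows essentially the same route as the paper: the proposition is stated there as an immediate consequence of the preceding proposition (that $[f(\mr{Cone}(\phi),\mb{C}^{\dpr}_1)]=[f(\mr{Cone}(\phi),\mb{C}^{\dpr}_0)]$ in Bott--Chern cohomology) together with the elementary direct-sum identity \eqref{additive property}, with the minus sign coming from the degree shift exactly as you explain. Your additional explicit invocation of metric independence is implicit in the paper's formulation and only makes the argument cleaner.
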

	
	If $\phi$ is a homotopy equivalence, it was shown in \cite{Block2010} that this is equivalent to $\phi_0: (E^{\bullet},\mE^{\dpr}_0) \rightarrow (F^{\bullet},\mb{F}^{\dpr}_0)$ being a quasi-isomorphism. Therefore the mapping cone $\mr{Cone}(\phi)$ is acyclic so proposition \ref{acyclic class} shows that the left side of equation \eqref{additive property} is zero in Bott-Chern cohomology. Combining these, we established our main result below.
	
	\begin{theorem}
		If two cohesive modules $(E^{\bullet},\mE^{\dpr})$ and $(F^{\bullet},\mb{F}^{\dpr})$ are homotopy equivalent, then they have the same Bott-Chern cohomology classes.
	\end{theorem}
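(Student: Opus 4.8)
The plan is to deduce the theorem from the two structural results of this section: the vanishing of Bott--Chern characteristic classes on an acyclic cohesive module (Theorem \ref{acyclic class}) and their additivity along a mapping-cone triangle. First I would fix a convergent power series $f(T)$; by the corollary on metric independence the classes $f(E^{\bullet},\mE^{\dpr})$ and $f(F^{\bullet},\mb{F}^{\dpr})$ in $H^{\bullet}_{BC}(X)$ do not depend on the auxiliary Hermitian structures, so it suffices to compare the classes themselves. Next I would invoke Block's homotopy criterion to upgrade the abstract homotopy equivalence into a concrete closed degree-zero morphism $\phi \in \mc{P}^0_{\mc{A}}(E,F)$ whose linear part $\phi_0 \colon (E^{\bullet},\mE^{\dpr}_0)\to(F^{\bullet},\mb{F}^{\dpr}_0)$ is a quasi-isomorphism of complexes of vector bundles. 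This is precisely the input the mapping-cone construction requires, yielding the exact triangle $0\to E\xrightarrow{\phi} F\xrightarrow{i_F}\mr{Cone}(\phi)$.

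The key step is then to observe that the underlying complex $(\mr{Cone}(\phi)^{\bullet},\mb{C}^{\dpr}_0)$ of the cone is acyclic. Indeed $\mb{C}^{\dpr}_0$ is the ordinary mapping-cone differential built from $\phi_0$, and the usual long exact sequence in cohomology relating $H^{\bullet}(E^{\bullet},\mE^{\dpr}_0)$, $H^{\bullet}(F^{\bullet},\mb{F}^{\dpr}_0)$ and $H^{\bullet}(\mr{Cone}(\phi)^{\bullet},\mb{C}^{\dpr}_0)$ forces the last group to vanish once $\phi_0$ is a quasi-isomorphism. Theorem \ref{acyclic class} then applies to $\mr{Cone}(\phi)$ and exhibits $f(\mr{Cone}^{\bullet}(\phi),\mb{C}^{\dpr})$ as $\del^X\delbar^X$ of the convergent integral $\mc{I}_{\mr{Cone}(\phi)}$, so its class in $H^{\bullet}_{BC}(X)$ is zero. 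Feeding this into the additivity proposition, which states $f(E^{\bullet},\mE^{\dpr}) - f(F^{\bullet},\mb{F}^{\dpr}) + f(\mr{Cone}^{\bullet}(\phi),\mb{C}_{\phi}) = 0$ in Bott--Chern cohomology, the cone term drops out and we obtain $f(E^{\bullet},\mE^{\dpr}) = f(F^{\bullet},\mb{F}^{\dpr})$ in $H^{\bullet}_{BC}(X)$.

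Granting the earlier results the deduction is short, and the one place that calls for care is that Theorem \ref{acyclic class} is stated for the Chern character $\tr\exp(-\mc{R})$, whereas here I want the acyclic vanishing for an arbitrary convergent $f$. I would handle this by running the same rescaling $\mE^{\dpr}_t=\sum_k t^{1-k}\mE^{\dpr}_k$ and the same Chern--Weil transgression, the crucial input being that on an acyclic complex the curvature has strictly positive $(0,0)$-component $t^2\Delta_E$, so that the supertrace of the leading term of $\tr\{f^{\pr}(-\mc{R}_t)\cdot N_E\}$ vanishes (being the supertrace of a supercommutator) while the remaining terms decay uniformly on $X$ as $t\to\infty$; this makes $\int_1^{\infty}\tr\{f^{\pr}(-\mc{R}_t)\cdot N_E\}\tfrac{dt}{t}$ convergent and realizes $f(\mr{Cone}(\phi))$ as a $\del^X\delbar^X$-exact form. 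This large-$t$ estimate, rather than any of the formal manipulations, is the real obstacle, and it is exactly the analytic ingredient already invoked for the Chern character in Theorem \ref{acyclic class}.
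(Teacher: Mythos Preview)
Your deduction is exactly the paper's: invoke Block's criterion to obtain a closed degree-zero morphism $\phi$ with $\phi_0$ a quasi-isomorphism, observe that $(\mr{Cone}(\phi)^{\bullet},\mb{C}^{\dpr}_0)$ is then acyclic, apply Theorem~\ref{acyclic class} to kill the cone's Bott--Chern class, and conclude from the additivity proposition.

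You are also right to flag that Theorem~\ref{acyclic class} is stated only for the Chern character while the additivity proposition is for an arbitrary convergent $f$; the paper passes over this point silently. Your proposed patch, however, does not work as written. The decay of $\tr\{\exp(-\mc{R}_t)\cdot N_E\}$ as $t\to\infty$ comes from the exponential suppression $\exp(-t^2\Delta_E)$, not from any supercommutator identity; for a polynomial $f$ the leading contribution $\tr_s\{f'(-t^2\Delta_E)\cdot N_E\}$ grows like a power of $t$, and it is \emph{not} the supertrace of a supercommutator (indeed $N_E$ does not commute with $\mE^{\dpr}_0$, and quantities of the form $\tr_s\{g(\Delta_E)N_E\}$ are precisely the building blocks of analytic torsion, which is generally nonzero). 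So the large-$t$ convergence argument genuinely uses the exponential and does not extend verbatim to arbitrary $f$. If you want the statement for general $f$, you need a different route---for instance, reducing to the Chern character by expressing the class $f(E)$ in Bott--Chern cohomology as a universal expression in the components of $\mr{ch}(E)$---rather than the direct estimate you sketch.
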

	
	As a corollary, we can define characteristic classes for an object $\ms{S}$ in $D^b_{\mr{Coh}}(X)$ as follows. We choose a cohesive module representative $(E^{\bullet}, \mb{E}^{\dpr})$ and equip it with some Hermitian structure $h_E$. Then for any convergent power series $f(T)$, we may define the associated Bott-Chern characteristic class $f(\ms{S})$ of $\ms{S}$ as the class of $f(E, \mb{E}^{\dpr}, h_E)$. In particular, this extends Bott-Chern cohomology classes to coherent sheaves.
	
	\bibliographystyle{numeric}
	\bibliography{library}
	
\end{document}